\documentclass[12pt]{amsart}
\usepackage{geometry}
\usepackage{euscript,eufrak,verbatim, mathrsfs}
\usepackage[psamsfonts]{amssymb}
\usepackage{bbm}
\usepackage{graphicx}
\usepackage{float, tikz}
\usepackage{pgfplots}
\pgfplotsset{compat=1.18}
\usepgfplotslibrary{fillbetween}
\usepackage{caption}
\usepackage{subcaption}
\usepackage{extarrows}
\usepackage[all, cmtip]{xy}

\usepackage{upref, xcolor, dsfont}
\usepackage{amsfonts,amsmath,amstext,amsbsy, amsopn,amsthm}
\usepackage{enumerate}

\usepackage{url}
\usepackage{mathtools}

\usepackage{helvet}
\usepackage{courier}
\usepackage{type1cm}

\usepackage{multicol}
\usepackage[bottom]{footmisc}

\usepackage[colorlinks,citecolor = red, linkcolor=blue,hyperindex]{hyperref}
\usepackage{bookmark}

\newtheorem{theorem}{Theorem}[section]
\newtheorem*{theorem*}{Theorem A}
\newtheorem{lemma}[theorem]{Lemma}

\newtheorem{proposition}[theorem]{Proposition}
\newtheorem{corollary}[theorem]{Corollary}
\newtheorem*{definition*}{Definition}
\newtheorem*{remark*}{Remark}

\newtheorem*{observation*}{Observation}

\newtheorem*{assumption*}{Assumption}

\theoremstyle{definition}
\newtheorem{definition}{Definition}[section]

\newtheorem*{problem*}{Problem}

\theoremstyle{remark}
\newtheorem{remark}{Remark}[section]
\newtheorem{claim}{Claim}[section]

\newcommand{\D}{\mathbb{D}}
\newcommand{\C}{\mathbb{C}}

\newcommand{\Hol}{\mathrm{Hol}}

\newcommand{\norm}[1]{\left\lVert #1\right\rVert}

\newcommand{\dd}{\mathrm{d}}

\begin{document}

\title[Strong and weak estimates]{Strong and weak-type estimates for radial weighted Bergman projections}

\author{Yuerang Li}
\address{Yuerang Li: College of Mathematics and Statistics, Chongqing University, Chongqing 401331, China}
\email{yuerangli@outlook.com}

\author{Zipeng Wang}
\address{Zipeng Wang: College of Mathematics and Statistics, Chongqing University, Chongqing 401331, China}
\email{zipengwang2012@gmail.com; zipengwang@cqu.edu.cn}

\subjclass{Primary 47B38, 46E22, 30H20.}
\keywords{Weighted Bergman projections, $\widehat{\mathcal{D}}$-weights, $L^p$-bounded, weak-type (1,1).}

\begin{abstract}
We completely characterize the $L^p$-boundedness and the weak-type (1,1) estimate of radial weighted Bergman projections on the unit disk. Our result, in particular, confirms a conjecture proposed by Pel\'{a}ez and R\"{a}tty\"{a} in 2021 and thereby settles a longstanding problem in the area that was formally posed by Dostani\'{c} in 2004.
Consequently, we establish the dichotomy that a radial weighted Bergman projection is bounded either only for $p=2$, or for all $p\in(1,\infty)$.

\end{abstract}

\maketitle

\section{Introduction and main results}
Bergman projections are fundamental operators in the theory of analytic function spaces and their operators. Their boundedness and endpoint behavior are central problems in the theory. The boundedness of Bergman projections plays an important role in identifying dual Bergman spaces under the natural integral pairing. Via reproducing kernels, Bergman projections can be viewed as singular integral operators, placing them at the interface of complex analysis and harmonic analysis \cite{BB78,McN94,Hed02,Bor04,PR14,PR15,PR16,Pel16}.

A major longstanding problem in the area, highlighted in \cite{PR21}, seeks to characterize the radial weights $\omega$ on the unit disk for which the associated Bergman projection $P_\omega$ is bounded on $L_\omega^p$ for $1<p<\infty$. Although it had been informally known to experts for some time, the problem was formally posed by Dostanić in 2004 \cite{Dos04}. The two natural limiting cases concern the boundedness of $P_\omega:L^1_\omega\to L^{1,\infty}_\omega$ and $P_\omega:L^\infty\to \mathcal{B}$, where $L_\omega^{1,\infty}$ denotes the weighted weak-$L^1$ space and $\mathcal{B}$ is the Bloch space. 

Peláez and Rättyä in \cite{PR21} proved that $P_\omega:L^\infty\to \mathcal{B}$ is bounded if and only if $\omega$ is a $\widehat{\mathcal{D}}$-weight, and further proved that the $\widehat{\mathcal{D}}$-weighted Bergman projection is bounded on $L^p_\omega$ for every $1<p<\infty$. Moreover, they conjectured \cite[Conjecture 2]{PR21} that $P_\omega:L^p_\omega\to L^p_\omega$ is bounded for some $p\in(1,\infty)\setminus\{2\}$ precisely when $\omega\in\widehat{\mathcal D}$. The conjecture had previously been verified in several cases under additional regularity assumptions \cite{PR21}.

The purpose of this paper is to resolve the Peláez--Rättyä conjecture and characterize the weak-$(1,1)$ estimate, which again is the $\widehat{\mathcal{D}}$-condition. Combined with the $L^\infty$-to-Bloch characterization of Peláez and Rättyä, our results complete the strong and weak-type estimates for radial weighted Bergman projections.

A weight $\omega$ is a nonnegative integrable function on $[0,1)$. It induces a radial measure on $\D$ by setting $\omega(z)=\omega(|z|)$ for any $z\in\D.$ For $1\le p<\infty$, the weighted Lebesgue space $L_\omega^p$ consists of all measurable functions $f$ on $\D$ such that
\[
\|f\|_{L_\omega^p}^{p} = \int_{\D} |f(z)|^{p}\omega(z)\,\dd A(z) < \infty,
\]
where $\dd A(z)= \dd x\,\dd y/\pi$ denotes the normalized area measure on $\D$. Moreover, the weighted weak space $L^{1,\infty}_\omega$ is the set of all measurable functions $f$ on the unit disk such that
$$
    \|f\|_{L^{1,\infty}_\omega}
    =
    \sup_{t>0}
    t\,
    \omega\bigl(\{z\in\D : |f(z)|>t\}\bigr)<\infty.
$$

Let $\Hol(\D)$ be the space of analytic functions on the unit disk. The weighted Bergman space $A_\omega^p$ is defined as
$
A_\omega^p=\Hol(\D)\cap L_\omega^p.
$
The necessary and sufficient condition for $A_\omega^2$ to be a reproducing kernel Hilbert space is 
that the weight $\omega$ satisfies
\[
\widehat{\omega}(z)=\int_{|z|}^1 \omega(s)\,\dd s > 0,
\quad z\in\D,
\]
which is assumed throughout the paper. Beyond their intrinsic interest, radial weighted Bergman spaces arise naturally from Shimorin's representation of the Bergman kernel for log-subharmonic weights \cite{Sh02,HJS02,HKZ00,PRW19}, from Guo's theory of invariant unitary reproducing kernels on Hilbert modules \cite{GHX04} and from other topics on analytic function spaces \cite{PR14}.

For a radial weight $\omega$, the Bergman projection $P_\omega: L_\omega^2 \to A_\omega^2$ is given by
\begin{equation*}
(P_\omega f)(z)=\int_{\D} f(\zeta)\overline{B_z^\omega(\zeta)}\,\omega(\zeta)\,\dd A(\zeta),
\end{equation*}
where $B_z^\omega$ is the reproducing kernel of $A_\omega^2$ and admits a series representation
\[
B_z^\omega(\zeta) = \sum_{n=0}^{\infty} \frac{(\overline{z}\zeta)^n}{2\omega_{2n+1}}, \quad z,\zeta\in\mathbb D.
\]
Here and throughout,
\[
\omega_x = \int_0^1 r^x\omega(r)\,\dd r, \quad x\geq 0,
\]
denotes the $x$-th moment of $\omega$.

For \(\alpha>-1\), the weight
$
W_\alpha(z)=(\alpha+1)(1-|z|^2)^\alpha
$
is referred to as a standard weight, while $\alpha=0$ corresponds to the classical Bergman space $A^2$. In this case, the Bergman projection $P_B$ is given by
\[
P_Bf(z)
  =
  \int_{\mathbb{D}}
  \frac{f(\zeta)}
  {(1-z\overline{\zeta})^2}\,dA(\zeta).
\]
 
Zaharjuta and Judovic \cite{ZJ64} proved that \(P_B\) is bounded on \(L^p\) for \(1<p<\infty\). The current canonical proof, due to Forelli and Rudin \cite{FR74}, is known as the Forelli--Rudin estimate, which has become a standard technique in the theory of operators on function spaces \cite{Zhu07}. 

Note that \(P_B\) is not bounded on \(L^\infty\), and hence, by duality, not on \(L^1\). For \(p=\infty\), Coifman--Rochberg--Weiss \cite{CRW76} showed that \(P_B\) maps \(L^\infty\) boundedly onto the Bloch space \(\mathcal{B}\).  Recall that for an analytic \(f\in\operatorname{Hol}(\mathbb{D})\), its Bloch seminorm is 
$
\|f\|_\mathcal{B}=\sup_{z\in\mathbb{D}}(1-|z|^2)|f'(z)|.
$
By a result of Coifman--Rochberg--Weiss in \cite[p. 632]{CRW76}, the Bloch space is the natural analytic BMO-type endpoint space.
At the other endpoint, \(p=1\), the weak-type \((1,1)\) estimate for \(P_B\) was proved by Deng, Huang, Zhao, and Zheng \cite{DHZZ01}. The Bergman projection for standard weights has the aforementioned properties as the unweighted one \cite{Zhu07}.

The situation for general radial weights may be quite different. Motivated by the work of Lin and Rochberg \cite{LR96} on Toeplitz and Hankel operators, Dostani\'c \cite{Dos04} proved that the Bergman projection corresponding to the weight
\[
w(r) = (1-r^2)^A \exp\left(-\frac{B}{(1-r^2)^\alpha}\right),\quad A\ge 0,\ B>0,\ 0<\alpha\le 1,
\]
is bounded only for \(p=2\). He further studied this problem in \cite{Dos09}. Zeytuncu \cite{Ze13} extended Dostani\'c's example to a broad class of radial weights. Further examples of the Bergman projection that is only bounded for $p=2$ can be found in \cite{CP15} and \cite{BLT21}.

Recall that a radial weight \(\omega\) belongs to the class
\(\widehat{\mathcal D}\) if its tail integral \(\widehat{\omega}\) satisfies the one-sided doubling condition
\begin{equation}\label{eq:widehat-D-definition}
  \widehat{\omega}(r) \le C\widehat{\omega}\left(\frac{1+r}{2}\right), \quad 0\le r<1,
\end{equation}
for some constant \(C=C(\omega)\ge1\). 

In general, Bergman projections are not bounded on either \(L^1\) or \(L^\infty\) \cite{Da22}. For \(p=\infty\), recall that Pel\'aez and R\"atty\"a \cite{PR21} showed that a radial weighted Bergman projection \(P_\omega: L^\infty \to \mathcal{B}\) is bounded if and only if \(\omega \in \widehat{\mathcal{D}}\). For $p=1$, Pel\'aez, R\"atty\"a and Wick \cite{PRW19} studied the weak-type (1,1) estimate for a class of $\widehat{\mathcal{D}}$-weights whose Bergman kernels admit an integrable representation. The general weak-type (1,1) problem had remained open.

In this work, we completely resolve all remaining cases of strong and weak-type estimates for radial weighted Bergman projections. 
\begin{theorem}\label{thm:main-ZS}
Let \(\omega\) be a radial weight on \(\mathbb D\). The following statements are equivalent:
\begin{enumerate}
\item \(P_\omega:L^1_\omega\to L^{1,\infty}_\omega\) is bounded;
\item \(P_\omega:L_\omega^p\to L_{\omega}^p\) is bounded for every \(1<p<\infty\);
\item \(P_\omega:L_\omega^p\to L_{\omega}^p\) is bounded for some \(p\in(1,\infty)\setminus\{2\}\);
%\item \(P_\omega:L^\infty\to \mathcal{B}\) is bounded;
\item \(\omega\in\widehat{\mathcal D}\).
\end{enumerate}
\end{theorem}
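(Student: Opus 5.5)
We prove (4)$\Rightarrow$(1), (4)$\Rightarrow$(2), (2)$\Rightarrow$(3) and (3)$\Rightarrow$(4), and close the cycle by showing (1)$\Rightarrow$(4).

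The implication (2)$\Rightarrow$(3) is trivial. For (4)$\Rightarrow$(2) we invoke the theorem of Peláez and Rättyä \cite{PR21} that $P_\omega$ is bounded on $L^p_\omega$ for all $1<p<\infty$ when $\omega\in\widehat{\mathcal D}$. For (4)$\Rightarrow$(1) we would run a Calderón--Zygmund argument on the space of homogeneous type formed by $\mathbb D$ (or a dyadic tree of Carleson boxes) with the measure $\omega\,\dd A$. The $\widehat{\mathcal D}$ condition gives doubling of $\omega$ on Carleson boxes $S(a)$, since $\omega(S(a))\asymp (1-|a|)\widehat\omega(a)$. We would take the $L^2_\omega$ bound as the starting estimate and perform a dyadic Calderón--Zygmund decomposition $f=g+\sum_j b_j$ at height $t$, with $b_j$ supported on maximal dyadic boxes $Q_j$ and mean zero with respect to $\omega$. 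The good part is handled by $L^2$ boundedness. For the bad part we need a Hörmander-type condition
\[
\sup_{\zeta,\zeta'\in Q}\int_{\D\setminus \widetilde Q}\bigl|B^\omega_z(\zeta)-B^\omega_z(\zeta')\bigr|\,\omega(z)\,\dd A(z)\lesssim 1.
\]
The obstacle is that $B^\omega$ has no closed form. We would therefore use the known $\widehat{\mathcal D}$ kernel estimates from \cite{PR21}: $\int_0^{2\pi}|B^\omega_z(re^{i\theta})|^p\,\dd\theta\asymp\int_0^{r|z|}\frac{\dd s}{\widehat\omega(s)^p(1-s)^{p}}$, together with the corresponding derivative bounds $|\partial_\zeta B^\omega_z(\zeta)|$. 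We would apply the mean value theorem in $\zeta$ and integrate over dyadic annuli away from $Q$. If $\widehat{\mathcal D}$ alone does not suffice pointwise, a fallback is to replace the operator by the positive majorant $P^+_\omega$ restricted to off-diagonal regions, and to control the near-diagonal part by a Forelli--Rudin-type integral. This step, the weak-type bound for a kernel lacking the regularity used in \cite{PRW19}, is the technical heart.

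For the necessity directions (3)$\Rightarrow$(4) and (1)$\Rightarrow$(4), which resolve the conjecture, we argue by contraposition. Suppose $\omega\notin\widehat{\mathcal D}$. Then there are $r_k\to1$ with $\widehat\omega(r_k)\ge k\,\widehat\omega\bigl(\frac{1+r_k}{2}\bigr)$. By duality, boundedness for some $p\neq2$ yields boundedness for $p'$, so we may assume $p>2$ or $p<2$ as convenient. We test on the monomials $f=\bar\zeta^{\,m}|\zeta|^{s}\zeta^{n}$-type functions, or better on the radial-times-monomial functions $f(\zeta)=\zeta^n\chi_{\{|\zeta|\le \rho\}}$ and their conjugate analogues. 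These satisfy $P_\omega f=\frac{\int_0^\rho r^{2n+1}\omega}{\omega_{2n+1}}\zeta^n$, so boundedness of $P_\omega$ on $L^p_\omega$ forces a uniform bound on the ratios
\[
\frac{\bigl(\int_0^1 r^{np+1}\omega\bigr)^{1/p}\int_0^\rho r^{2n+1}\omega}{\omega_{2n+1}\bigl(\int_0^\rho r^{np+1}\omega\bigr)^{1/p}}.
\]
We would choose $n\asymp(1-r_k)^{-1}$ and $\rho$ adapted to $r_k$. The failure of doubling makes the moments $\omega_x$ jump in the range $x\asymp(1-r_k)^{-1}$: the moment behaves like $\widehat\omega(1-1/x)$ only if doubling holds. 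Comparing the $p$-moments with the $2$-moments then produces a ratio that is unbounded in $k$ whenever $p\ne2$. This is the step we expect to be hardest. If single monomials are insufficient, we would instead test on lacunary sums $\sum_j \zeta^{n_j}$ with $n_j$ placed at the scales where $\widehat\omega$ drops. Such sums behave like $\ell^2$ sums of the kernels, so their $L^p_\omega$ and $L^2_\omega$ norms decouple differently, which forces failure for $p\neq2$. For (1)$\Rightarrow$(4) we would apply the same test functions, using weak $L^1$ in place of $L^p$. Alternatively, we would derive (3) from (1) by Marcinkiewicz interpolation with the $L^2$ bound, which gives boundedness for $1<p<2$, and then appeal to (3)$\Rightarrow$(4).
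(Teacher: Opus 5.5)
There is a genuine gap, and it is in the implication that carries the conjecture, $(3)\Rightarrow(4)$. The step you flag as ``hardest'' is essentially the whole content of the Pel\'aez--R\"atty\"a conjecture.

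Testing on functions $g(|\zeta|)e^{in\theta}$ can never give more than the reverse H\"older inequality $\omega_{pn+1}^{1/p}\omega_{p'n+1}^{1/p'}\le C\omega_{2n+1}$. The extremal profile is $g(r)=r^{n/(p-1)}$, which is what the paper uses; your truncated monomials only give a weaker instance. What is needed is a mechanism that turns this three-scale inequality into $\widehat{\mathcal D}$, and ``choose $n\asymp(1-r_k)^{-1}$ and compare moments'' does not supply one. Write $H=-\log M$ with $M(x)=\omega_{x+1}$. Failure of moment doubling at scale $n$ only says that $nH'(n)=n\int_n^\infty(-H'')$ is large. That can be caused entirely by mass of $-H''$ at scales much larger than $n$, while the three-point defect $J_n=H(2n)-\frac1pH(pn)-\frac1{p'}H(p'n)$ at scale $n$ stays small. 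So the violation cannot simply be read off at the scale where $\widehat\omega$ fails to double.

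The missing idea is the paper's rigidity argument:
\begin{enumerate}
\item $H$ is concave (by Cauchy--Schwarz) and $H'(x)\to0$ as $x\to\infty$.
\item A Peano-kernel identity gives $J_k=\int K_k(-H'')$, so reverse H\"older becomes $J_k\le\log C$ for every $k$.
\item Summing $J_k/k^2$ over $k\ge An$, and using $\sum_{k\ge An}K_k(s)/k^2\gtrsim1$ for $s\ge n$, gives $H'(n)\lesssim(\log C)/n$. Hence $M(n)\le C^{\kappa_p}M(2n)$.
\item Moment doubling then yields tail doubling by splitting $M(2x)$ at $(1+r)/2$.
\end{enumerate}
Without this aggregation over all scales, or an equivalent argument, your necessity direction is unproved, and with it $(1)\Rightarrow(4)$. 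The lacunary-sum fallback is likewise not substantiated.

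Your plan for $(4)\Rightarrow(1)$ also rests on a false premise. $(\D,|\cdot|,\omega\,\dd A)$ is not a space of homogeneous type for general $\omega\in\widehat{\mathcal D}$. For example, $\omega$ may vanish on a sequence of annuli and still lie in $\widehat{\mathcal D}$, so Euclidean balls of measure zero sit next to balls of positive measure. Carleson boxes are doubling, but they do not shrink to interior points. A Calder\'on--Zygmund decomposition over them therefore does not give $|g|\lesssim t$ off the maximal boxes, which you need for the good part.

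The paper avoids this by using the non-homogeneous Calder\'on--Zygmund theorem of Hyt\"onen--Liu--Yang--Yang, with the upper-doubling dominating function $\lambda_\omega(z,s)=2s\,\widehat\omega((|z|-s)_+)$. It also needs the pointwise bounds $|B^\omega_z(\zeta)|\lesssim\bigl(|1-\overline z\zeta|\,\omega_{1/|1-\overline z\zeta|}\bigr)^{-1}$ and $|\partial_\zeta B^\omega_z(\zeta)|\lesssim\bigl(|1-\overline z\zeta|^2\,\omega_{1/|1-\overline z\zeta|}\bigr)^{-1}$. You assume such bounds rather than prove them, and the integral-means asymptotics you quote do not yield them.

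Finally, the fallback through the positive majorant $P^+_\omega$ throws away the cancellation you need. It cannot work in general, since $P^+_\omega$ is unbounded on $L^p_\omega$ for $\omega\in\widehat{\mathcal D}\setminus\mathcal D$ (Pel\'aez--R\"atty\"a).
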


As an immediate corollary, we obtain the following dichotomy.

\begin{corollary}
Let \(\omega\) be a radial weight on \(\mathbb D\). The Bergman projection $P_\omega$ is either bounded only for $p=2$ or bounded for all $p\in(1,\infty)$.
\end{corollary}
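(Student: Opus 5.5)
The statement immediately above is the Corollary, and it follows directly from Theorem~\ref{thm:main-ZS}. The plan is a case split on whether \(\omega\in\widehat{\mathcal D}\).

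First, \(P_\omega\) is always bounded on \(L^2_\omega\). It is the orthogonal projection of the Hilbert space \(L^2_\omega\) onto its closed subspace \(A^2_\omega\), and \(A^2_\omega\) is closed because point evaluations are bounded under the standing assumption \(\widehat{\omega}(z)>0\). Hence its norm is at most \(1\), for every radial weight.

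\emph{Case \(\omega\in\widehat{\mathcal D}\).} By the implication (4)\(\Rightarrow\)(2) of Theorem~\ref{thm:main-ZS}, \(P_\omega\) is bounded on \(L^p_\omega\) for every \(p\in(1,\infty)\).

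\emph{Case \(\omega\notin\widehat{\mathcal D}\).} Suppose \(P_\omega\) were bounded on \(L^p_\omega\) for some \(p\in(1,\infty)\) with \(p\neq 2\). Then statement (3) of Theorem~\ref{thm:main-ZS} holds, and the implication (3)\(\Rightarrow\)(4) gives \(\omega\in\widehat{\mathcal D}\), a contradiction. So \(p=2\) is the only exponent in \((1,\infty)\) for which \(P_\omega\) is bounded.

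The two cases are exhaustive and mutually exclusive, which gives the claimed dichotomy. There is no real obstacle here: all the analytic content is in Theorem~\ref{thm:main-ZS}, in particular the necessity direction (3)\(\Rightarrow\)(4). The only point to state explicitly is the unconditional \(L^2_\omega\)-boundedness, which follows from orthogonality.
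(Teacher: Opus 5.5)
Your proposal is correct and matches the paper, which states the corollary as an immediate consequence of Theorem~\ref{thm:main-ZS} together with the unconditional $L^2_\omega$-boundedness of $P_\omega$. Splitting on $\omega\in\widehat{\mathcal D}$ and applying (4)$\Rightarrow$(2) and (3)$\Rightarrow$(4) is exactly that argument written out.
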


\begin{remark}
Pel\'aez and R\"atty\"a \cite[Theorem 7]{PR21} proved $(4)\Rightarrow(2)$. Our argument also yields a new proof of this
implication.
\end{remark}

The Bergman projection $P_\omega$ is always bounded on $L^2_\omega$. By the Marcinkiewicz interpolation theorem and duality, this yields $(1)\Rightarrow(2)$, while $(2)\Rightarrow(3)$ is immediate. The implication $(3)\Rightarrow(4)$ proves the Pel\'aez--R\"atty\"a  conjecture, thereby resolving Dostani\'c's problem. The argument hinges on a rigidity principle for moment functions: a reverse H\"{o}lder-type inequality at three distinct scales enforces a doubling condition on the moment sequence, which in turn implies the $\widehat{\mathcal{D}}$-condition. Finally, to complete the proof of $(4)\Rightarrow(1)$, we embed the weighted Bergman projection into the framework of non-homogeneous Calder\'{o}n--Zygmund operators. 

Although the measure induced by a $\widehat{\mathcal D}$-weight need not be doubling on the whole disk, it admits an upper-doubling
property near the boundary. Moreover, the singular behavior of the Bergman kernel is also concentrated near the boundary diagonal. This makes the non-homogeneous Calder\'on--Zygmund theory on upper-doubling metric measure spaces particularly suitable. Using the framework developed by Hyt\"onen, Liu, Yang, and Yang \cite{HLYY12}, we establish the desired weak-type estimate.

At the technical level, we need to show that the integral kernel of the $\widehat{\mathcal{D}}$-weighted Bergman projection, which is the reproducing kernel of the $\widehat{\mathcal{D}}$-weighted Bergman space, satisfy certain modified size and smoothness estimates. In recent work \cite{PRWW26}, Pennanen, Rättyä, Wang, and Wu provide an elegant and sharp estimate of all orders of the $\widehat{\mathcal{D}}$-reproducing kernel. We include a different proof of the estimate needed for our current applications in Section \ref{S:S4} to make the paper self-contained. 

Another integral operator closely related to $P_\omega$ is the maximal weighted Bergman projection
\begin{equation*}
(P_\omega^+ f)(z)=\int_{\mathbb D} f(\zeta)|B_z^\omega(\zeta)|\,\omega(\zeta)\,dA(\zeta),
\end{equation*}
where $B_z^\omega$ is the reproducing kernel of $A_\omega^2$. For the standard weight $W_\alpha$, the operators $P_{W_\alpha}$ and $P^+_{W_\alpha}$ share the same strong-type and weak-type estimates. However, Peláez and Rättyä \cite[Corollary 10]{PR21} showed that there exists a radial weight for which $P_\omega$ is bounded on $L^p_\omega$ but $P_\omega^+$ is not. In fact, \cite[Theorem 9]{PR21} yields a stronger characterization: for a $\widehat{\mathcal D}$-weight and $1<p<\infty$, $P_\omega^+:L^p_\omega\to L^p_\omega$ is bounded if and only if $\omega\in\mathcal D$ where $\mathcal D$ denotes the subclass of $\widehat{\mathcal D}$ consisting of those weights for which there exist constants $C_1>0$ and $C_2>1$ such that
\[
\widehat{\omega}(r)\geq C_1\widehat{\omega}(1-\frac{1-r}{C_2})\qquad \forall 0<r<1.
\]

We obtain the following complete characterization of the strong and weak-type boundedness of the maximal weighted Bergman projection.
\begin{corollary}\label{thm:P+main-ZS}
Let \(\omega\) be a radial weight on \(\mathbb D\). The following statements are equivalent:
\begin{enumerate}
\item \(P_\omega^+:L^1_\omega\to L^{1,\infty}_\omega\) is bounded;
\item \(P_\omega^+:L_\omega^p\to L_{\omega}^p\) is bounded for every \(1<p<\infty\);
\item \(P_\omega^+:L_\omega^p\to L_{\omega}^p\) is bounded for some \(p\in(1,\infty)\setminus\{2\}\);
%\item \(P_\omega:L^\infty\to \mathcal{B}\) is bounded;
\item \(\omega\in \mathcal D\).
\end{enumerate}
\end{corollary}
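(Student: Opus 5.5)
We prove the cycle $(4)\Rightarrow(1)\Rightarrow(2)\Rightarrow(3)\Rightarrow(4)$ for $P^+_\omega$, reducing to Theorem~\ref{thm:main-ZS} and to \cite[Theorem 9]{PR21}.

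\emph{The easy links.} $(2)\Rightarrow(3)$ is trivial. For $(1)\Rightarrow(2)$ we first note that $P_\omega^+$ has the nonnegative symmetric kernel $|B^\omega_z(\zeta)|$ with respect to $\omega\,dA$, so it is formally self-adjoint on $L^2_\omega$. Boundedness on $L^2_\omega$ is not automatic, however, so we proceed as follows. Assume (1). Since $|P_\omega f|\le P_\omega^+|f|$, the operator $P_\omega$ is of weak type $(1,1)$, and Theorem~\ref{thm:main-ZS} gives $\omega\in\widehat{\mathcal D}$. Next, Marcinkiewicz interpolation requires a strong or weak endpoint at some $p_1>1$. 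We obtain it by duality: $P^+_\omega$ is positive and self-adjoint, so its weak $(1,1)$ bound should yield a restricted weak-type bound at $p=\infty$, i.e.\ $\sup_z\int |B_z^\omega(\zeta)|\,\omega(\zeta)\,dA(\zeta)<\infty$. The argument is to test (1) against $f=\chi_{E}/\omega(E)$ with $E$ a shrinking pseudo-hyperbolic disc around $\zeta_0$. By the mean-value property for $|B_z^\omega|$ (subharmonic in $\zeta$, and comparable on such discs for $\widehat{\mathcal D}$-weights via the kernel estimates of Section~\ref{S:S4}), this gives $t\,\omega(\{z: |B^\omega_z(\zeta_0)|\gtrsim t\})\lesssim 1$.

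A weak $L^1$ bound alone is not quite a strong $L^1$ bound. So the key fact to prove is the following: for $\widehat{\mathcal D}$-weights, this uniform weak-type bound on $z\mapsto B_z^\omega(\zeta_0)$, summed over dyadic annuli, forces $\int_{\D}|B^\omega_{\zeta_0}|\,\omega\,dA\lesssim 1$. This in turn is known \cite{PR21} to be equivalent to $\omega\in\mathcal D$, because the $L^1_\omega$-norm of the kernel behaves like $\int_0^{|z|}\frac{dt}{\widehat\omega(t)(1-t)}\cdot\widehat\omega(|z|)$-type sums. The implication uses the two-sided norm estimates of $B_z^\omega$ from Section~\ref{S:S4}. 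Once $\omega\in\mathcal D$ is established, \cite[Theorem 9]{PR21} gives (2). In effect this shortcuts to $(1)\Rightarrow(4)\Rightarrow(2)$.

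\emph{$(3)\Rightarrow(4)$.} Since $|P_\omega f|\le P^+_\omega|f|$, boundedness of $P_\omega^+$ on $L^p_\omega$ for some $p\ne2$ gives boundedness of $P_\omega$ there, so Theorem~\ref{thm:main-ZS} yields $\omega\in\widehat{\mathcal D}$. Then \cite[Theorem 9]{PR21} upgrades this to $\omega\in\mathcal D$. That theorem is stated for every $1<p<\infty$, including $p\ne2$, so it applies directly.

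\emph{$(4)\Rightarrow(1)$.} This is the substantive step. For $\omega\in\mathcal D\subset\widehat{\mathcal D}$ we reuse the non-homogeneous Calder\'on--Zygmund framework of \cite{HLYY12} from the proof of Theorem~\ref{thm:main-ZS}. The size and smoothness estimates proved for $B_z^\omega$ transfer to $|B_z^\omega|$, since $\bigl||a|-|b|\bigr|\le|a-b|$. The one missing ingredient is an $L^2_\omega$ (or any $L^{p_0}_\omega$) bound for $P^+_\omega$ as the starting point of the Calder\'on--Zygmund decomposition, and this is exactly \cite[Theorem 9]{PR21} under $\omega\in\mathcal D$. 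With that bound in hand, the Calder\'on--Zygmund decomposition on the upper-doubling space gives the weak $(1,1)$ estimate.

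The main obstacle is the direct link from (1) to $\omega\in\mathcal D$ described above: extracting the integrability of the kernel from a weak-type hypothesis. If that step fails, the fallback is to show that weak $(1,1)$ of the positive operator $P^+_\omega$ implies the testing condition $\sup_z P^+_\omega1(z)<\infty$. This would follow by duality, pairing against $\chi_E$ and using a layer-cake/Kolmogorov inequality, $\int_E P^+_\omega f\,\omega\,dA\lesssim \omega(E)\log(e+\cdots)$, combined with the exact radial structure of $P^+_\omega 1$.
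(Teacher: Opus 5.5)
Your links $(2)\Rightarrow(3)$, $(3)\Rightarrow(4)$ and $(4)\Rightarrow(1)$ coincide with the paper's. Your $(1)\Rightarrow(2)$, however, has a genuine gap: both your main route and your fallback aim at statements that are false.

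The quantity $\sup_{z}\int_{\mathbb D}|B^\omega_z(\zeta)|\,\omega(\zeta)\,dA(\zeta)=\sup_z P^+_\omega 1(z)$ is the $L^\infty$ operator norm of $P^+_\omega$. It is infinite for every $\omega\in\widehat{\mathcal D}$: the $L^1_\omega$-norm of $B^\omega_z$ grows like $\log\frac{e}{1-|z|}$, and the $\check{\mathcal D}$-type expression you wrote is not this norm. Already for $\omega\equiv1\in\mathcal D$, where $B_a(z)=(1-\overline{a}z)^{-2}$, one has $\int_{\mathbb D}|1-\overline{a}z|^{-2}\,dA(z)=|a|^{-2}\log\frac{1}{1-|a|^2}\to\infty$. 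At the same time each set $\{z\in\mathbb D:|1-\overline{a}z|^{-2}>t\}$ has normalized area at most $4/t$, so the weak-$L^1$ norms are uniformly bounded. Several consequences follow:
\begin{enumerate}
\item Your ``key fact'' (a uniform weak-$L^1$ bound on the kernel, summed over dyadic annuli, gives a uniform $L^1$ bound) fails by exactly the logarithm that the dyadic summation produces.
\item The asserted equivalence of $\sup_{\zeta_0}\|B^\omega_{\zeta_0}\|_{L^1_\omega}<\infty$ with $\omega\in\mathcal D$ is wrong, since that condition never holds.
\item The weak $(1,1)$ bound does not dualize to an $L^\infty$ bound for $P^+_\omega$.
\item The fallback condition $\sup_z P^+_\omega1(z)<\infty$ is the same false statement.
\end{enumerate}

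The missing idea uses only ingredients you already have: Lemma~\ref{thm:HLYY} is an \emph{equivalence}. From (1), $|P_\omega f|\le P^+_\omega|f|$ and Theorem~\ref{thm:main-ZS} give $\omega\in\widehat{\mathcal D}$, as you say. Proposition~\ref{prop:standard-kernel}, together with $\bigl||a|-|b|\bigr|\le|a-b|$ (which you invoke yourself in $(4)\Rightarrow(1)$), then shows that $|B^\omega_z(\zeta)|$ is a standard kernel on the upper-doubling space $(\mathbb D,|\cdot|,\mu_\omega)$. So the direction ``weak $(1,1)\Rightarrow$ bounded on $L^2_\omega$'' of \cite[Theorem~1.1]{HLYY12} applies directly to $P^+_\omega$. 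Marcinkiewicz interpolation between weak $(1,1)$ and strong $(2,2)$ gives $1<p\le2$, and symmetry of the nonnegative kernel gives $2\le p<\infty$ by duality. This is how the paper proves $(1)\Rightarrow(2)$; no direct extraction of the $\mathcal D$-condition from the weak-type hypothesis is needed.
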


%\begin{remark}
%We recall the elegant result of Pel\'{a}ez and R\"{a}tty\"{a} \cite{PR21} that establishes the equivalence $(4)\Leftrightarrow(5)$ and the implication $(5)\Rightarrow (2)$. Observe that $(2)\Rightarrow (3)$ is trivial. The contributions of the present paper are the necessity of the $\widehat{\mathcal{D}}$-condition for $L_\omega^p$-boundedness when $p\not=2$, and the weak-(1,1) characterization.
%\end{remark}
%\begin{remark}
%The BMO seminorm of the function $f$ defined on $\mathbb{D}$ is given by
%\[
%\|f\|_{\text{BMO}}=\sup_{Q\subset\mathbb{D}}\frac{1}{|Q|}\int_Q |f(z)-\frac{1}{|Q|}\int_Q f(\zeta)dA(\zeta)|dA(\zeta),
%\]
%where the supremum runs over all cubes $Q$ that are contained in $\mathbb{D}$.
%By a theorem of Coifman–Rochberg–Weiss \cite[p. 632]{CRW76}, for an analytic function $\|f\|_\mathcal{B}\approx \|f\|_{\text{}BMO}$.
%\end{remark}

The remainder of the paper is organized as follows. In Section~\ref{S:S2}, we prove the implication $(3)\Rightarrow (4)$ in Theorem~\ref{thm:main-ZS} by deriving the $\widehat{\mathcal D}$-condition from a reverse H\"{o}lder-type inequality for moments of the weight. Section~\ref{S:S3} completes the implication $(4)\Rightarrow (1)$ in Theorem~\ref{thm:main-ZS} using the non-homogeneous Calder\'{o}n--Zygmund theory. The proof of Corollary~\ref{thm:P+main-ZS} is given in Section~\ref{S:corollaryP+}. Section~\ref{S:imply14} provides a direct proof of $(1)\Rightarrow (4)$ in Theorem~\ref{thm:main-ZS} using testing functions. Finally, Section~\ref{S:S4} presents the pointwise estimates needed for the $\widehat{\mathcal D}$-weighted Bergman kernel.

\medskip

\noindent \textbf{Acknowledgements.} This work is supported by the National Natural Science Foundation of China (No.12471116) and 2025CDJ-IAIS YB-004 (Chongqing University).

\section{From Strong-type boundedness to the $\widehat{\mathcal D}$-Condition}\label{S:S2}
The goal of this section is to prove the implication $(3)\Rightarrow (4)$ in Theorem \ref{thm:main-ZS}. Namely,
\[
L_\omega^p\text{-boundedness of radial weighted Bergman projection}\Rightarrow \omega\in\widehat{\mathcal{D}}.
\]
The proof consists of three steps. First, $L_\omega^p$-boundedness implies a reverse Hölder inequality for the moments of $\omega$. Second, we prove a rigidity principle showing that such an inequality forces the moment function to be doubling. Finally, moment doubling implies the one-sided doubling condition for the tail integral.

\begin{lemma}\label{lem:Peano-kernel-representation}
Let \(2<p<\infty\), and let $p'=\frac{p}{p-1}$ be the conjugate exponent of \(p\), i.e.,
$
\frac{1}{p}+\frac{1}{p'}=1.
$
Let \(n>0\), and let $H$ be a $C^2$ function on the interval $[p'n,pn]$, then
\[
H(2n)-\frac{1}{p}H(pn)-\frac{1}{p'}H(p'n)
= \int_{p'n}^{pn} K_n(s)\bigl(-H''(s)\bigr)\,\dd s,
\]
where
\[
K_n(s)=
\begin{cases}
\dfrac{s-p'n}{p'}, & p'n\le s\le 2n,\\[6pt]
\dfrac{pn-s}{p}, & 2n\le s\le pn.
\end{cases}
\]
\end{lemma}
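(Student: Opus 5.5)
This is a Peano-kernel identity for a three-point linear functional. The plan is to verify it directly by integrating by parts twice on each of the two subintervals $[p'n,2n]$ and $[2n,pn]$.

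The setup rests on two observations. First, $p'<2<p$ because $p>2$. Second, $2n$ is the convex combination $\frac1p(pn)+\frac1{p'}(p'n)$, since $\frac1p\cdot p+\frac1{p'}\cdot p' = 2$. Hence the functional $L(H)=H(2n)-\frac1pH(pn)-\frac1{p'}H(p'n)$ annihilates constants and linear functions, which is exactly why a representation against $-H''$ should hold.

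The kernel $K_n$ has the properties needed for the boundary terms to work out:
\begin{itemize}
\item it is continuous at $2n$, since $(2n-p'n)/p' = 2n/p'-n = n(1-2/p)$ and $(pn-2n)/p = n-2n/p$ agree;
\item it vanishes at both endpoints $p'n$ and $pn$;
\item it has slope $1/p'$ on the left piece and $-1/p$ on the right piece.
\end{itemize}

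\textbf{Left piece.} Integrating by parts once,
\[
\int_{p'n}^{2n}\frac{s-p'n}{p'}\,(-H''(s))\,\dd s
=-K_n(2n)H'(2n)+\frac1{p'}\int_{p'n}^{2n}H'(s)\,\dd s .
\]
The boundary term at $p'n$ vanishes because $K_n(p'n)=0$. The remaining integral gives $\frac1{p'}\bigl(H(2n)-H(p'n)\bigr)$.

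\textbf{Right piece.} Similarly,
\[
\int_{2n}^{pn}\frac{pn-s}{p}\,(-H''(s))\,\dd s
= K_n(2n)H'(2n)-\frac1p\int_{2n}^{pn}H'(s)\,\dd s .
\]
The boundary term at $pn$ vanishes because $K_n(pn)=0$. The remaining integral gives $-\frac1p\bigl(H(pn)-H(2n)\bigr)$.

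\textbf{Summing.} Adding the two pieces, the $H'(2n)$ terms cancel by continuity of $K_n$ at $2n$. What remains is
\[
\Bigl(\frac1{p'}+\frac1p\Bigr)H(2n)-\frac1{p'}H(p'n)-\frac1pH(pn),
\]
and since $\frac1p+\frac1{p'}=1$ this is the claimed expression.

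There is no real obstacle. The only points requiring care are the two checks already made: that $K_n$ is continuous at $2n$, so the derivative terms cancel, and that $K_n$ vanishes at the endpoints. The $C^2$ hypothesis justifies the integrations by parts.
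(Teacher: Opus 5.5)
Your proposal is correct and follows essentially the same route as the paper: one integration by parts on each of $[p'n,2n]$ and $[2n,pn]$, the boundary terms at the endpoints vanish, and the two $H'(2n)$ terms cancel because $\frac{2n-p'n}{p'}=\frac{pn-2n}{p}$. The identity then follows from the fundamental theorem of calculus and $\frac1p+\frac1{p'}=1$.
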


\begin{proof}
The proof follows from a direct computation.
Using the definition of \(K_n\) and integrating by parts, we obtain
\[
\begin{aligned}
\int_{p'n}^{pn} K_n(s)\bigl(-H''(s)\bigr)\,\dd s
&=\int_{p'n}^{2n}\frac{s-p'n}{p'}\bigl(-H''(s)\bigr)\,\dd s \\
&\quad+\int_{2n}^{pn}\frac{pn-s}{p}\bigl(-H''(s)\bigr)\,\dd s \\
&=-\left[\frac{s-p'n}{p'}H'(s)\right]_{p'n}^{2n}+\frac{1}{p'}\int_{p'n}^{2n}H'(s)\,\dd s \\
&\quad-\left[\frac{pn-s}{p}H'(s)\right]_{2n}^{pn} -\frac{1}{p}\int_{2n}^{pn}H'(s)\,\dd s \\
&=-\frac{2n-p'n}{p'}H'(2n)+\frac{1}{p'}\bigl(H(2n)-H(p'n)\bigr) \\
&\quad +\frac{pn-2n}{p}H'(2n) -\frac{1}{p}\bigl(H(pn)-H(2n)\bigr).
\end{aligned}
\]
Since \(p'=p/(p-1)\), we have
\[
\frac{2n-p'n}{p'}
= n\frac{2-p'}{p'}
= n\frac{p-2}{p}
= \frac{pn-2n}{p}.
\]
Therefore,
\[
\begin{aligned}
\int_{p'n}^{pn} K_n(s)\bigl(-H''(s)\bigr)\,\dd s
&=
\frac{1}{p'}\bigl(H(2n)-H(p'n)\bigr)
-\frac{1}{p}\bigl(H(pn)-H(2n)\bigr) \\
&=
\left(\frac{1}{p}+\frac{1}{p'}\right)H(2n)
-\frac{1}{p}H(pn)-\frac{1}{p'}H(p'n) \\
&=
H(2n)-\frac{1}{p}H(pn)-\frac{1}{p'}H(p'n).
\end{aligned}
\]
This completes the proof.
\end{proof}

Let $\nu$ be a finite positive Borel measure on $[0,1)$ such that $\nu([r,1))>0$ for $0\le r<1$,
and define
\[
 M_\nu(x)=\int_0^1r^x \, \dd \nu(r), \quad x\ge0.
\]

The next two results show that the one-sided doubling property of a measure follows from a reverse H\"older condition on its moment sequence.
 
\begin{lemma}\label{lem:reverse-Jensen}
Let $\nu$ be a finite positive Borel measure on $[0,1)$ with moment function $M_{\nu}$. Let \(2<p<\infty\), and let $p'$
be the conjugate exponent of \(p\). If there exists a constant $C\ge1$ such that
\begin{equation}\label{eq:abstract-reverse-Holder}
 M_\nu(pn)^{1/p}M_\nu(p'n)^{1/p'} \le C M_\nu(2n), \quad n\in\mathbb N,
\end{equation}
then there exist constants $n_p\in\mathbb N$ and $\kappa_p>0$, depending only on $p$, such that
\begin{equation}\label{eq:real-moment-doubling}
 M_\nu(x) \le C^{2\kappa_p}M_\nu(2x), \quad x\ge n_p+1.
\end{equation}
\end{lemma}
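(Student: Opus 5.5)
The plan is to work with $H(s)=\log M_\nu(s)$ and read the reverse H\"older hypothesis \eqref{eq:abstract-reverse-Holder} as a curvature bound through Lemma~\ref{lem:Peano-kernel-representation}.

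\emph{Basic properties of $H$.} By H\"older's inequality, $M_\nu$ is log-convex, so $H$ is convex and $C^2$ on $(0,\infty)$; differentiation under the integral is justified by dominated convergence for $s>0$. Since $r^x$ decreases in $x$, $H$ is nonincreasing. Hence $H'\le 0$ and $H'$ is nondecreasing. Moreover $\nu([r,1))>0$ for all $r<1$ gives $M_\nu(x)^{1/x}\to 1$, that is $H(x)/x\to 0$. By convexity this forces $H'(s)\uparrow 0$ as $s\to\infty$. So for every $x$,
\[
-H'(x)=\int_x^\infty H''(s)\,\dd s,
\]
and the whole task reduces to showing $\int_x^\infty H''\le \kappa\log C/x$.

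\emph{Local curvature bound.} Taking logarithms in \eqref{eq:abstract-reverse-Holder} gives $\frac1pH(pn)+\frac1{p'}H(p'n)-H(2n)\le\log C$. By Lemma~\ref{lem:Peano-kernel-representation} this reads
\[
\int_{p'n}^{pn}K_n(s)H''(s)\,\dd s\le \log C .
\]
Fix the middle window $I_n=[a n,b n]$ with $a=(p'+2)/2$ and $b=(p+2)/2$, so that $p'<a<2<b<p$ since $p>2$. On $I_n$ we have $K_n(s)\ge c_p n$ for some $c_p>0$ depending only on $p$. Since $H''\ge0$, we obtain $\int_{I_n}H''\le \log C/(c_p n)$ for every $n\in\mathbb N$.

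\emph{Summing over scales.} Because $b>a$, consecutive windows overlap once $bn\ge a(n+1)$, i.e.\ for $n\ge n_p:=\lceil a/(b-a)\rceil$. Hence $[an,\infty)\subset\bigcup_{m\ge n}I_m$ for $n\ge n_p$. Summing all the $I_m$ would give the divergent sum $\sum 1/m$, so I will use a sparse chain instead. Choose $m_0=n$ and $m_{k+1}=\lfloor b m_k/a\rfloor$, so that the $I_{m_k}$ cover $[an,\infty)$ (possibly after a harmless adjustment of $n_p$ ensuring $m_{k+1}>m_k$ and that the floor loses nothing essential). The $m_k$ grow geometrically with ratio about $b/a>1$. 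This yields
\[
\int_{an}^\infty H''\le \sum_k \frac{\log C}{c_p m_k}\le \frac{\kappa'_p\log C}{n}.
\]

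\emph{Conclusion.} Given real $x\ge n_p+1$, pick the integer $n=\lfloor x/a\rfloor$, or $n_p$ if that is larger. Then $-H'(x)\le -H'(\max(x,an))\le \kappa''_p\log C/x$, with the comparison constants depending only on $p$; small $x$ is handled by the choice $x\ge n_p+1$. Finally,
\[
H(x)-H(2x)=\int_x^{2x}(-H')\le x\,(-H'(x))\le 2\kappa_p\log C,
\]
which is exactly \eqref{eq:real-moment-doubling}.

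The main obstacle is the passage from the integer-indexed windows to all real $x$, together with the geometric chain that avoids the harmonic divergence. The delicate point is making sure that $n_p$ and $\kappa_p$ depend only on $p$, and that the boundary case $x$ near $n_p+1$ is covered.
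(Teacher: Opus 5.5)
Your proof is correct in outline and reaches the result by a different summation from the paper's. The last step contains a slip that must be repaired, but the repair is easy.

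\textbf{How your route differs.} The paper works with $-\log M_\nu$, the negative of your $H$. For each fixed $s$ it looks at the window of \emph{indices} $k\in[As,Bs]$ on which $K_k(s)\ge\delta s$. It then averages all the inequalities $J_k\le\log C$ with weights $k^{-2}$, proves $\sum_{k\ge m}K_k(s)/k^2\ge C_1$ uniformly for $s\ge n$, and uses Tonelli to bound the tail of the curvature by $O_p(\log C/n)$.

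You do the dual bookkeeping. For each fixed $n$ you restrict to a window of \emph{points} $s\in I_n=[an,bn]$, where indeed $K_n(s)\ge\frac{p-2}{2p}\,n$. This gives a scale-localized bound $\int_{I_n}H''\le\frac{2p\log C}{(p-2)n}$. You then cover $[an,\infty)$ by a geometrically growing chain of windows, which removes the harmonic divergence.

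Your version makes the mechanism more transparent: a curvature bound at each scale, summed geometrically. The paper's weighted average avoids building the chain. Your passage to real $x$ through the monotonicity of $-H'$ also avoids the paper's double application of integer doubling.

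\textbf{The geometric chain.} Make the threshold explicit. The recursion gives $m_{k+1}\ge bm_k/a-1$. Once $m_k\ge 2a/(b-a)$, this yields $m_{k+1}\ge\frac{a+b}{2a}\,m_k$, and hence $\sum_k 1/m_k\le\frac{a+b}{b-a}\cdot\frac1n$. Call the resulting integer threshold $n^*$.

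\textbf{The error in the conclusion.} The inequality $-H'(x)\le -H'(\max(x,an))$ is backwards in exactly the case you introduce it for, namely $n=n_p>x/a$. Since $H'$ is nondecreasing, $-H'$ is nonincreasing, so $an>x$ gives $-H'(x)\ge -H'(an)$. A bound at $an$ therefore says nothing about $-H'(x)$.

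\textbf{The fix.} Drop the alternative choice $n=n_p$ and enlarge the threshold in the statement, which is allowed because the lemma only asks for some $n_p$ depending on $p$. Take $n_p=\lceil a n^*\rceil$ and $n=\lfloor x/a\rfloor$. Then $x\ge n_p+1$ forces $n\ge n^*$, $an\le x$ and $n\ge x/(2a)$. Hence $-H'(x)\le -H'(an)\le 2a\kappa'_p\log C/x$. The final integration $H(x)-H(2x)\le x\,(-H'(x))$ then goes through as you wrote it.
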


\begin{proof}
By the homogeneity of \eqref{eq:abstract-reverse-Holder}, we may assume without loss of generality that $\nu$ is a probability measure on $[0,1)$. We then define 
\[
H(x) = -\log M_\nu(x), \quad x\ge0.
\]
\begin{claim}\label{claim:properties-of-H}
The function $H$ is nonnegative, nondecreasing, twice continuously differentiable, and concave on $(0,\infty)$. Moreover, it satisfies the asymptotic derivative condition
\begin{equation}\label{eq:boundary-H-prime}
 \lim_{x\to\infty}H'(x)=0.
\end{equation}
\end{claim}
\begin{proof}[Proof of Claim \ref{claim:properties-of-H}]
  Since $M_\nu$ is positive, bounded above by $1$, and nonincreasing, it follows that $H$ is nonnegative and nondecreasing.
  
  Fix a compact interval $[a,b]\subset(0,\infty)$. For $x\in[a,b]$ and $j\in\{1,2\}$,
  \[
  r^x|\log r|^j\le r^a|\log r|^j, \quad 0<r<1.
  \]
  The function on the right, extended by zero at $r=0$, is bounded on $[0,1)$ and hence integrable with respect to the finite measure $\nu$. The dominated convergence theorem therefore shows that $M_\nu$ is twice continuously differentiable on $(0,\infty)$, with
  \[
  M_\nu'(x) = \int_0^1 r^x \log r \, \dd \nu(r), \quad \text{and} \quad M_\nu''(x) = \int_0^1 r^x (\log r)^2 \, \dd \nu(r),
  \]
  where the integrands are understood to be zero at $r=0$.
  Consequently,
  \[
  H''(x)= - (\log M_\nu)''(x) = - \frac{M_\nu''(x) M_\nu(x) - (M_\nu'(x))^2}{M_\nu(x)^2}.
  \]
  To determine the sign of the numerator, we apply the Cauchy--Schwarz inequality:
  \begin{align*}
    (M_\nu'(x))^2 
    &= \left(\int_0^1 r^{x/2} (r^{x/2} \log r) \, \dd \nu(r) \right)^2 \\
    &\le \left(\int_{0}^1 r^x \, \dd \nu(r) \right) \left(\int_0^1 r^x (\log r)^2\, \dd \nu(r)\right) = M_\nu(x) M_\nu''(x).
  \end{align*}
  Thus, $H''(x) \le 0$ for $x>0$, which proves the concavity of $H$.

  Finally, we prove the asymptotic derivative condition \eqref{eq:boundary-H-prime}. Fix $0<\rho<1$. Since $\nu([\rho,1))>0$, we have
  \[
  M_\nu(x) = \int_0^1 r^x \dd \nu(r)  \ge \int_\rho^1 r^x \dd \nu(r) \ge \rho^x \nu([\rho,1)).
  \]
  This implies that
  \[
  H(x) = -\log M_\nu(x) \le - x \log \rho - \log \nu([\rho, 1)).
  \]
  Hence,
  \[
  \frac{H(x)}{x} \le -\log \rho - \frac{\log \nu([\rho,1))}{x}.
  \]
  Taking the limit inferior and limit superior, and using the nonnegativity of $H$, we obtain
  \[
    0 \le \liminf_{x \to \infty} \frac{H(x)}{x} \le \limsup_{x \to \infty} \frac{H(x)}{x} \le - \log \rho, \quad 0 < \rho < 1.
  \]
  Letting $\rho$ tend to $1^{-}$, we obtain $\lim_{x\to \infty} H(x)/x = 0$. Thus, the concavity and monotonicity of $H$ imply that
  \[
  0 \le H'(x) \le \frac{H(x) - H(a)}{x - a}, \quad \text{for } 0 < a < x.
  \]
  This implies that $$\lim_{x\to \infty} H'(x) = 0.$$ This proves the claim.
\end{proof}

Taking logarithms in \eqref{eq:abstract-reverse-Holder}, we obtain
\begin{equation}\label{eq:J-bound}
 J_n :=H(2n)-\frac1pH(pn)-\frac1{p'}H(p'n) \le \log C, \quad n\in\mathbb N.
\end{equation}
By Lemma \ref{lem:Peano-kernel-representation}, we have 
\begin{equation}\label{eq:Peano-kernel}
 J_n =\int_{p'n}^{pn}K_n(s)\bigl(-H''(s)\bigr) \, \dd s, 
\end{equation}
where
\begin{equation}\label{eq:kernel-Kn}
 K_n(s) =\begin{cases} 
  \displaystyle \frac{s-p'n}{p'},&p'n\le s\le2n,\\[6pt]
 \displaystyle \frac{pn-s}{p},&2n\le s\le pn,\\[6pt]
 0,&\text{otherwise}.
 \end{cases}
\end{equation}
Fix $s>0$. The condition $K_k(s)>0$ is equivalent to
\[
 \frac{s}{p}<k<\frac{s}{p'}.
\]
The midpoint of this interval is $s/2$. We retain its central half, which
leaves the same margin from both endpoints. Set
\begin{equation*}
 \delta=\frac{p-2}{4p},
 \quad
 A=\frac12-\delta=\frac{p+2}{4p},
 \quad
 B=\frac12+\delta=\frac{3p-2}{4p}.
\end{equation*}
Then the interval
\[
 \left[As,Bs\right]\subset \left[\frac{s}{p},\frac{s}{p'}\right].
\]
Let $k\in\mathbb N\cap[As,Bs]$. If $k\le s/2$, then
$2k\le s\le pk$, and the second branch of \eqref{eq:kernel-Kn} gives
\[
 K_k(s)=k-\frac{s}{p} \ge As-\frac{s}{p} =\delta s.
\]
If $k\ge s/2$, then $p'k\le s\le2k$, and the first branch gives
\[
 K_k(s)=\frac{s}{p'}-k \ge\frac{s}{p'}-Bs =\delta s.
\]
Hence
\begin{equation}\label{eq:kernel-lower-window}
 K_k(s)\ge\delta s,
 \quad k\in\mathbb N\cap[As,Bs].
\end{equation}

Suppose now that $\delta s\ge1$. The interval $[As,Bs]$ has length
$2\delta s$, so it contains at least $\lfloor2\delta s\rfloor$ integers;
since $\delta s\ge1$, this number is at least $\delta s$. For each of these
integers, $k\le Bs$; therefore \eqref{eq:kernel-lower-window} implies
\begin{align}
 \sum_{\substack{k\in\mathbb N\\As\le k\le Bs}}
 \frac{K_k(s)}{k^2}\ge (\delta s)\frac{\delta s}{B^2s^2} =\frac{\delta^2}{B^2}=\left(\frac{p-2}{3p-2}\right)^2 =: C_1.
 \label{eq:discrete-kernel-lower}
\end{align}
Here the constant $C_1$ depends only on $p$.
We now choose the lower threshold on $n$. Two requirements have appeared:
\begin{itemize}
  \item we need $\delta n\ge1$ in order to use \eqref{eq:discrete-kernel-lower}, and
  \item we shall need $An\ge2$ to estimate a convergent tail.
\end{itemize}
Thus set
\begin{equation*}
 n_p
 =\left\lceil \max\left\{\frac1\delta,\frac2A\right\}\right\rceil
 =\left\lceil\max\left\{\frac{4p}{p-2},\frac{8p}{p+2}\right\}\right\rceil.
\end{equation*}
Fix an integer $n\ge n_p$ and put $m=\lceil An\rceil$. If $s\ge n$ and
$k\in\mathbb N\cap[As,Bs]$, then $k\ge As\ge An$, hence $k\ge m$.
Consequently, \eqref{eq:discrete-kernel-lower} yields
\[
 \sum_{k=m}^{\infty}\frac{K_k(s)}{k^2}\ge C_1 ,
 \quad s\ge n.
\]
Multiplying by the nonnegative function $-H''(s)$, integrating, and then
using Tonelli's theorem together with \eqref{eq:J-bound} and \eqref{eq:Peano-kernel}, we obtain
\begin{align*}
  C_1 \int_n^\infty\bigl(-H''(s)\bigr)\, \dd s
 &\le \int_{n}^{\infty} \sum_{k = m}^{\infty} \frac{K_k(s)}{k^2} (-H''(s))\, \dd s\\
 &\le\sum_{k=m}^\infty\frac1{k^2} \int_0^\infty K_k(s)\bigl(-H''(s)\bigr)\, \dd s \\
 &=\sum_{k=m}^\infty\frac{J_k}{k^2} \le (\log C)\sum_{k=m}^\infty\frac1{k^2}.
\end{align*}
Because $m\ge2$,
\[
 \sum_{k=m}^\infty\frac1{k^2}
 \le\int_{m-1}^\infty\frac{ \dd t}{t^2}
 =\frac1{m-1}.
\]
Moreover, $m-1\ge An/2$ because $An\ge2$. Combining these estimates with
\eqref{eq:boundary-H-prime} gives
\begin{align*}
 H'(n)
 &=\int_n^\infty\bigl(-H''(s)\bigr) \, \dd s \le\frac{\log C}{(m - 1) C_1 } \le \frac{2\log C}{A C_1  n}.
\end{align*}
Therefore, since $H'$ is nonincreasing,
\[
 H(2n)-H(n)
 =\int_n^{2n}H'(t)\, \dd t
 \le nH'(n)
 \le\frac{2\log C}{A C_1 }.
\]
The exponent in the final estimate has now emerged from the proof. Define
\begin{equation*}
 \kappa_p :=\frac{2}{AC_1} =\frac{8p(3p-2)^2}{(p+2)(p-2)^2}.
\end{equation*}
Then
\[
 \log\frac{M_\nu(n)}{M_\nu(2n)}
 =H(2n)-H(n)
 \le\kappa_p\log C,
\]
which proves
\begin{equation}\label{eq:integer-moment-doubling}
 M_\nu(n) \le C^{\kappa_p}M_\nu(2n), \quad n\ge n_p.
\end{equation}

Finally, let $x\ge n_p+1$ and put $n=\lfloor x\rfloor$. Then $n\ge n_p$,
$n\le x$, and, since $n\ge1$,
\[
 2x<2n+2\le4n.
\]
Using the monotonicity of $M_\nu$ and applying
\eqref{eq:integer-moment-doubling} twice, we get
\begin{align*}
 M_\nu(x) &\le M_\nu(n) \le C^{\kappa_p}M_\nu(2n) \le C^{2\kappa_p}M_\nu(4n) \le C^{2\kappa_p}M_\nu(2x).
\end{align*}
This proves \eqref{eq:real-moment-doubling} and completes the proof.
\end{proof}

The following lemma is standard in the literature (see, e.g., \cite{PR21}) and we include a proof for the reader's convenience.
\begin{lemma}\label{lem:moment-to-tail}
Let $\nu$ be a finite positive Borel measure on $[0,1)$ such that
$\nu([r,1))>0$ for every $0\le r<1$. If there exist constants $x_0\ge1$ and $C\ge1$ such that
\begin{equation*}
 M_\nu(x)\le C M_\nu(2x),
 \quad x\ge x_0.
\end{equation*}
Then there is an explicit threshold $r_0=r_0(x_0,C)\in[1/2,1)$ such that
\begin{equation}\label{eq:U-tail-doubling}
 \nu([r,1)) \le16C^4\nu\!\left(\left[\frac{1+r}{2},1\right)\right),
 \quad r_0\le r<1.
\end{equation}
\end{lemma}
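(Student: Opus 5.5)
Write $U(r)=\nu([r,1))$. The plan is to compare moments with tails in both directions: a lower bound of $M_\nu(x)$ by a tail, and an upper bound of $M_\nu(x)$ by a tail plus an exponentially small error. We then choose $x\approx 1/(1-r)$ and iterate the moment doubling twice.

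\textbf{Lower bound.} For $t\in[0,1)$ and $x\ge0$,
\[
M_\nu(x)\ge \int_{[t,1)} r^x\,\dd\nu(r)\ge t^x\,U(t).
\]

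\textbf{Upper bound.} Split at $t$:
\[
M_\nu(x)\le U(t)+t^x\,\nu([0,t)) \le U(t)+t^x M_\nu(0).
\]
A cleaner form avoids the error term. Integrating by parts, $M_\nu(x)=\int_0^1 x r^{x-1}U(r)\,\dd r$ plus the harmless atom term $U(0)\cdot 0^x$, which vanishes for $x>0$. Since $U$ is nonincreasing,
\[
M_\nu(x)\le U(t)\,t^x + \int_t^1 x r^{x-1}U(r)\,\dd r \le U(t)+ x\int_t^1 U(r)\,\dd r,
\]
but this too is not directly comparable. So I will instead use doubling to absorb the error term.

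\textbf{Main chain.} Fix $r$ and set $s=(1+r)/2$ and $x=1/(1-r)$; we need $x\ge x_0$, which is one source of the threshold $r_0$. Then:
\begin{enumerate}
\item From the splitting bound at $t=r$, together with $r^{x}\le e^{-1}$ and $M_\nu(0)\le$ const, $U(r)$ is compared from below with $M_\nu(x)$. The inequality actually needed is $U(r)\lesssim M_\nu(\cdot)$, and that comes from the lower bound: $U(r)\le r^{-y}M_\nu(y)$ for any $y$. Take $y=x=1/(1-r)$; since $r\ge 1/2$ we have $r^{-1/(1-r)}\le 4$, so $U(r)\le 4M_\nu(x)$.
\item Apply the doubling hypothesis twice: $M_\nu(x)\le C^2 M_\nu(4x)$. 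The constant $16C^4$ suggests doubling four times, or an exponent $16$ used differently, so I will allow $M_\nu(x)\le C^kM_\nu(2^kx)$ with a suitable $k$.
\item Bound $M_\nu(X)$ for $X=2^kx$ from above by $U(s)$:
\[
M_\nu(X)\le U(s)+s^X M_\nu(0),\qquad s^X=\Bigl(1-\tfrac{1-r}{2}\Bigr)^{2^k/(1-r)}\le e^{-2^{k-1}}.
\]
\end{enumerate}

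\textbf{Main obstacle.} The difficulty is absorbing the term $e^{-2^{k-1}}M_\nu(0)$. It is not small relative to $U(s)$ automatically. I will handle it by absorbing into the left side instead. Use the lower bound $M_\nu(0)\,e^{-2^{k-1}}$ versus $M_\nu(x)$, noting $M_\nu(x)\ge r^x U(r)$. Alternatively, split $M_\nu(X)\le U(s)+s^X\nu([0,s))$ and further split $\nu([0,s))$ into the parts $[0,r)$ and $[r,s)$:
\begin{itemize}
\item On $[r,s)$ the contribution is at most $s^X U(r)\le e^{-2^{k-1}}U(r)$.
\item On $[0,r)$ use $r^X\le e^{-2^k}$. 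This needs $\nu([0,r))\,e^{-2^k}$ to be small compared with $U(r)$, which holds only for $r$ close to $1$ if $U$ decays slowly. This is exactly where $r_0$ depending on $(x_0,C)$ enters. The plan is to choose $r_0$ via the doubling hypothesis iterated from $x_0$, which yields a polynomial lower bound $M_\nu(x)\gtrsim x^{-\log_2 C}$ and hence $U(r)\gtrsim (1-r)^{\log_2 C}$. This beats the factor $e^{-2^k/(1-r)}$ for $r$ near $1$.
\end{itemize}

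\textbf{Conclusion.} Putting the pieces together,
\[
U(r)\le 4C^{k}\bigl(U(s)+\tfrac12 U(r)/(4C^k)+\dots\bigr),
\]
and absorbing the $U(r)$ terms gives $U(r)\le 16C^4U(s)$ with $k=4$ (or $k=2$ plus the constants $4\cdot 2\cdot 2$). The main obstacle is tuning the exponentially small cross terms so that they absorb with an explicit $r_0$.
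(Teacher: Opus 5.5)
There is a genuine gap, located at the step you call the main obstacle.

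\textbf{Why your error term cannot be absorbed.} For the part of $M_\nu(X)$ coming from $[0,r)$ you use the bound $r^X\nu([0,r))$ and assert a decay factor $e^{-2^k/(1-r)}$. But with $X=2^k/(1-r)$ one has $r^X=\bigl(r^{1/(1-r)}\bigr)^{2^k}$, and $r^{1/(1-r)}\in[1/4,e^{-1})$ for $1/2\le r<1$. So $r^X\ge 4^{-2^k}$, a constant independent of $r$.

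Hence the bound you use stays above $4^{-2^k}\nu([0,r))$, which increases to $4^{-2^k}\nu([0,1))>0$ as $r\to1$, while $U(r)\to0$. No lower bound on $U$, polynomial or otherwise, can make this term at most $\varepsilon U(r)$. Pushing $r_0$ toward $1$ makes the ratio worse, not better. Obtaining real decay would require $X\gg 1/(1-r)$, i.e.\ roughly $\log_2(1/(1-r))$ doubling steps, which destroys the uniformity of the constant.

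Independently, absorbing the $[r,s)$ piece needs $4C^k e^{-2^{k-1}}<1$. So $k$ must grow with $C$, and the bookkeeping cannot yield $16C^4$. The ``$\dots$'' in your final display is exactly the uncontrolled term. The underlying problem is that you compare the low part of the moment with the total mass $M_\nu(0)$, an absolute error, instead of with a moment, a relative error that can be absorbed.

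\textbf{The missing idea.} As in the paper, split off only one factor of the integrand: for $t<s$ one has $t^{2x}=t^x\cdot t^x\le s^x t^x$, so
\[
M_\nu(2x)\le U(s)+s^xM_\nu(x),
\qquad\text{hence}\qquad
M_\nu(x)\le Cs^xM_\nu(x)+C\,U(s)
\]
whenever $x\ge x_0$. Choose $x=\log(2C)/(-\log s)$, so that $Cs^x=1/2$, and absorb; this is legitimate since $M_\nu(x)\le\nu([0,1))<\infty$. This gives $M_\nu(x)\le 2C\,U(s)$.

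Your own lower bound $U(r)\le r^{-x}M_\nu(x)$ then finishes the proof. For $r\ge 1/2$ one has $r\ge s^3$, because $8r-(1+r)^3=(1-r)(r^2+4r-1)\ge0$. Therefore $r^{-x}\le s^{-3x}=(2C)^3$, and $U(r)\le 16C^4U(s)$.

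Only one application of the doubling hypothesis is needed, and no polynomial lower bound on $U$ enters. The threshold comes solely from the requirement $x\ge x_0$, namely $r_0=\max\{1/2,\,2(2C)^{-1/x_0}-1\}$. Note that $x$ must be of order $\log(2C)/(1-r)$ rather than $1/(1-r)$. The factor $\log(2C)$ is what makes $Cs^x$ small enough to absorb, and it is also the source of the factor $(2C)^3$ in the constant.
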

\begin{proof}
Fix $1/2\le r<1$ and set
\[
s = \frac{1+r}{2}.
\]
Choose $$x=x(r):=\frac{\log(2C)}{-\log s},$$ so that $C s^x = 1/2$. Moreover, the condition $x \ge x_0$ is equivalent to $s\ge(2C)^{-1/x_0}$, or, equivalently,
\[
r\ge r_0:=\max\left\{\frac12,\,2(2C)^{-1/x_0}-1\right\}.
\]
Clearly $r_0<1$, and $x\ge x_0$ whenever $r_0\le r<1$.

Splitting $M_\nu(2x)$ at $s$, we obtain
\begin{align}
M_\nu(2x)
&=\int_{[0,s)}t^{2x} \, \dd \nu(t) +\int_{[s,1)}t^{2x}\, \dd \nu(t) \notag\\
&\le s^x\int_{[0,s)} t^x\, \dd \nu(t)+\nu([s,1)) \le s^x M_\nu(x) + \nu([s,1)).\label{eq:splitting-at-s}
\end{align}
If $r_0\le r<1$, then $x \ge x_0$. The doubling property of $M_\nu$ and \eqref{eq:splitting-at-s} gives
\[
M_\nu(x) \le C M_\nu(2x) \le Cs^x M_\nu(x) + C\nu([s,1)).
\]
Hence, for $r_0\le r<1$,
\begin{align*}
M_\nu(x) \le Cs^xM_\nu(x)+C\nu([s,1)) =\frac12M_\nu(x)+C\nu([s,1)),
\end{align*}
and therefore
\[
M_\nu(x)\le 2C\nu([s,1)).
\]
On the other hand,
\[
M_\nu(x)= \int_{0}^{1} t^x \, \dd \nu(t) \ge \int_{r}^1 t^x \, \dd \nu(t) \ge r^x\nu([r,1)),
\]
so
\[
\nu([r,1)) \le 2C r^{-x}\nu([s,1)).
\]
Since $r\ge1/2$, we have $$8r-(1+r)^3 =(1-r)(r^2+4r-1)\ge0,$$ and consequently
\[
r\ge\left(\frac{1+r}{2}\right)^3=s^3.
\]
It follows that $r^{-x}\le s^{-3x}=(2C)^3$. Therefore,
\[
\nu([r,1)) \le16C^4 \nu\!\left(\left[\frac{1+r}{2},1\right)\right), \quad r_0 \le r < 1.
\]
This proves \eqref{eq:U-tail-doubling} and completes the proof.
\end{proof}

\begin{proof}[Proof of $(3)\Rightarrow (4)$ in Theorem~\ref{thm:main-ZS}]
By duality, we may assume that $P_\omega$ is bounded on $L_\omega^p$ for some $p>2$. 
We first establish the reverse H\"{o}lder inequality \eqref{eq:omega-moment-reverse-holder}, which was obtained by Dostanić \cite{Dos09}. For completeness, we give a proof by testing functions.

For $n\in\mathbb N$, consider a family of functions 
\[
 f_n(re^{i\theta}) =r^{n/(p-1)}e^{in\theta}.
\]
Since $|f_n|\le1$, we have $$f_n\in L_\omega^p \cap L_\omega^2.$$ The functions
$
e_k(z)=\frac{z^k}{\sqrt{2\omega_{2k+1}}}$, $ k\ge0,
$
form an orthonormal basis of $A_\omega^2$. Hence,
\[
P_\omega f_n=\sum_{k=0}^{\infty}\langle f_n,e_k\rangle_{L_\omega^2}e_k.
\]
By angular orthogonality, $\langle f_n,e_k\rangle_{L_\omega^2}=0$ for $k\ne n$, while
\begin{align*}
\langle f_n,e_n\rangle_{L_\omega^2}
&=\frac{2}{\sqrt{2\omega_{2n+1}}}\int_0^1 r^{n+n/(p-1)+1}\omega(r)\,\dd r =\frac{2\omega_{p'n+1}}{\sqrt{2\omega_{2n+1}}}.
\end{align*}
Consequently,
\[
P_\omega f_n(z)=\frac{\omega_{p'n+1}}{\omega_{2n+1}}z^n.
\]
Moreover,
\begin{align*}
 \norm{f_n}_{L_\omega^p}^p
 &=2\int_0^1r^{np/(p-1)+1}\omega(r) \, \dd r =2 \omega_{p'n + 1},\\
 \norm{z^n}_{L_\omega^p}^p &=2\int_0^1r^{pn+1}\omega(r)\, \dd r =2\omega_{pn + 1}.
\end{align*}
Since $P_\omega$ is bounded on $L_\omega^p$, we have $$\|P_\omega f_n\|_{L_\omega^p} \le C \|f_n\|_{L_\omega^p}$$ for some constant $C = C(\omega, p) \ge1$. Therefore, 
\[
 \frac{\omega_{p'n + 1}}{\omega_{2n + 1}}\bigl(2\omega_{pn + 1}\bigr)^{1/p} \le C\bigl(2\omega_{p'n + 1}\bigr)^{1/p}.
\]
After cancelling $2^{1/p}\omega_{p'n+1}^{1/p}$, we obtain
\begin{equation}\label{eq:omega-moment-reverse-holder}
  \omega_{pn + 1}^{1/p}\, \omega_{p'n+1}^{1/p'} \le C\, \omega_{2n+1}.
\end{equation}
We now consider the measure
\[
 \dd \nu(r)=r\omega(r) \, \dd r.
\]
Since $\omega$ is integrable, $\nu$ is finite. Moreover, for $0< r<1$,
\[
\nu([r,1))=\int_r^1 t\omega(t)\,\dd t\ge r\widehat{\omega}(r)>0,
\]
and $\nu([0,1)) > 0$. Thus, $\nu$ satisfies the standing measure hypotheses of Lemmas~\ref{lem:reverse-Jensen} and~\ref{lem:moment-to-tail}. For this measure, write
\begin{equation}\label{eq:moment-of-rw}
 M(x):=M_\nu(x)=\int_0^1 r^x\,\dd\nu(r)=\omega_{x+1},
 \quad x\ge0.
\end{equation}
Hence, \eqref{eq:omega-moment-reverse-holder} is equivalent to the moment inequality
\[
 M(pn)^{1/p}M(p'n)^{1/p'}\le CM(2n).
\]
By Lemma~\ref{lem:reverse-Jensen} and Lemma~\ref{lem:moment-to-tail}, there are constants $C = C(\omega,p) \ge 1$ and $r_0 = r_0(\omega,p) \in [1/2,1)$ such that 
\[
\int_r^1 t \omega(t) \, \dd t \le C \int_{\frac{1 + r}{2}}^1 t \omega(t) \, \dd t, \quad \text{for all } r_0 \le r < 1.
\]
For $r_0 \le r < 1$,
\begin{align*}
  \widehat{\omega}(r) = \int_r^1 \omega(t) \,\dd t \le \frac{1}{r} \int_r^1 t \omega(t) \,\dd t \le \frac{C}{r} \int_{\frac{1 + r}{2}}^1 t\omega(t)\, \dd t \le \frac{C}{r} \int_{\frac{1 + r}{2}}^1 \omega(t) \,\dd t \le \frac{C}{r} \widehat{\omega}\left(\frac{1 + r}{2}\right).
\end{align*}
Since $r\ge r_0\ge \frac{1}{2}$,
\[
\widehat{\omega}(r) \le 2C \widehat{\omega}\left(\frac{1 + r}{2}\right), \quad \text{for all } r_0 \le r < 1.
\]
If $0 \le r < r_0$, then $\widehat{\omega}(r) \le \widehat{\omega}(0)$ and $\widehat{\omega} \left( \frac{1 + r}{2}\right) \ge \widehat{\omega} \left( \frac{1 + r_0}{2}\right) > 0$. Therefore, 
\[
\widehat{\omega}(r) \le \frac{\widehat{\omega}(0)}{\widehat{\omega} \left( \frac{1 + r_0}{2}\right)} \widehat{\omega} \left( \frac{1 + r}{2}\right).
\]
Finally, set
\[
C_{\omega, p} = \max\left\{2C,\frac{\widehat{\omega}(0)}{\widehat{\omega} \left( \frac{1 + r_0}{2}\right)} \right\}.
\]
Then, for all $0 \le r < 1$
\[
\widehat{\omega}(r) \le C_{\omega, p}\, \widehat{\omega}\left(\frac{1 + r}{2}\right).
\]
This completes the proof.
\end{proof}

\section{Weak-type (1,1) estimate for $\widehat{\mathcal{D}}$-Bergman projections} \label{S:S3}
The main goal of this section is to prove the following proposition, which 
establishes the implication $(4)\Rightarrow (1)$ in Theorem \ref{thm:main-ZS}
\begin{proposition}\label{pro:main-weak}
If $\omega\in\widehat{\mathcal{D}}$, then there exists a constant $C = C(\omega) \ge 1$ such that
\begin{equation*}
 \|P_\omega f\|_{L^{1,\infty}_\omega} \le C \|f\|_{L^1_\omega}, \quad f\in L^1_\omega.
\end{equation*}
\end{proposition}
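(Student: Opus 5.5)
The plan is to realize $P_\omega$ as a Calder\'on--Zygmund operator on a non-homogeneous metric measure space, as announced in the introduction, and then invoke the weak-type $(1,1)$ theorem of Hyt\"onen--Liu--Yang--Yang \cite{HLYY12}. Concretely, we equip $\D$ with the quasi-metric
\[
d(z,\zeta)=\bigl|1-\overline{z}\zeta\bigr| \quad\text{(or } \bigl||z|-|\zeta|\bigr|+\bigl|1-\overline{z}\zeta/|z\zeta|\bigr| \text{ near the boundary)},
\]
which is equivalent to a genuine metric up to a power and the bounded region $|z|\le 1/2$. We take $\mu=\omega\,\dd A$. The ball $B(z,r)$ is comparable to a Carleson-type box $S(I)$ with $|I|\approx r$ (for $r\gtrsim 1-|z|$), and $\mu(S(I))\approx |I|\,\widehat\omega(1-|I|)$. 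Since $\omega\in\widehat{\mathcal D}$, the function $t\mapsto\widehat\omega(1-t)$ is increasing and satisfies $\widehat\omega(1-t)\lesssim\widehat\omega(1-t/2)$. We then define the dominating function
\[
\lambda(z,r)=r\,\widehat\omega(1-r)\quad (r\le 1),\qquad \lambda(z,r)=\mu(\D)\quad(r>1).
\]
This is increasing in $r$, and $\widehat{\mathcal D}$ gives $\lambda(z,2r)\le 2C\lambda(z,r)$. It also satisfies $\mu(B(z,r))\lesssim\lambda(z,r)$, because a ball of radius $r$ lies in a box of side $\approx r$, whatever the distance of $z$ to the boundary. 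So $(\D,d,\mu)$ is upper doubling. Note that $\lambda$ is independent of $z$, so the geometric doubling of the space is automatic: it is the Euclidean plane locally.

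Next we verify the kernel conditions for $K(z,\zeta)=\overline{B^\omega_z(\zeta)}$. The first is the size estimate
\[
|B_z^\omega(\zeta)|\lesssim \frac{1}{\lambda(z,d(z,\zeta))}\approx\frac{1}{|1-\overline z\zeta|\,\widehat\omega(1-|1-\overline z\zeta|)}.
\]
The second is the H\"ormander-type smoothness: for $d(z,z')\le d(z,\zeta)/2$,
\[
|B^\omega_{z}(\zeta)-B^\omega_{z'}(\zeta)|\lesssim \frac{d(z,z')^{\epsilon}}{d(z,\zeta)^{\epsilon}\lambda(z,d(z,\zeta))},
\]
with the symmetric statement in $\zeta$ (the kernel is Hermitian). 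Alternatively, the integrated H\"ormander condition suffices. These pointwise bounds are what Section \ref{S:S4} provides. I expect to prove them from the series $\sum (\overline z\zeta)^n/(2\omega_{2n+1})$ together with the $\widehat{\mathcal D}$ estimate $\omega_x\approx\widehat\omega(1-1/x)$. The size bound follows by splitting the sum at $n\approx 1/|1-\overline z\zeta|$ and using summation by parts on the tail, since $1/\omega_{2n+1}$ has controlled increments. The smoothness bound follows from the derivative estimate $|\partial_z B_z^\omega(\zeta)|\lesssim |1-\overline z\zeta|^{-1}\cdot(\text{size bound})$ and the mean value theorem along a path of length $\approx d(z,z')$ on which $d(\cdot,\zeta)$ stays comparable.

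Finally, $P_\omega$ is bounded on $L^2_\mu$ as an orthogonal projection. The \cite{HLYY12} theorem then says that a CZ operator bounded on $L^2(\mu)$ on an upper doubling, geometrically doubling space is bounded from $L^1(\mu)$ to $L^{1,\infty}(\mu)$. This gives the proposition for $f$ in a dense class, such as bounded functions with compact support in $\D$, where the integral representation holds off the diagonal. The extension to all of $L^1_\omega$ then follows by density, using that $P_\omega f$ is defined pointwise for $f\in L^1_\omega$ because $B_z^\omega$ is bounded in $\zeta$ for fixed $z$.

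The main obstacle is the kernel size and smoothness estimate. The weight $\omega$ may vanish on intervals and may not be doubling, so $\omega_x$ has only the one-sided regularity coming from $\widehat{\mathcal D}$. The summation by parts must therefore use only $\omega_x\lesssim\omega_{2x}$, and no lower doubling. A secondary delicate point is handling the region near the origin and large radii, where $\lambda$ is capped at $\mu(\D)$. There the kernel is bounded and contributes trivially.
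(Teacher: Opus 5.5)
\textbf{There is a genuine gap in the upper-doubling step.} The problem is structural, not technical.

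First, $d(z,\zeta)=|1-\overline z\zeta|$ is not a metric or a quasi-metric. We have $d(z,z)=1-|z|^2>0$, and the ball $\{\zeta:|1-\overline z\zeta|<r\}$ is empty (it does not even contain $z$) when $r<1-|z|$. This happens at every interior point, not only for $|z|\le 1/2$, and no power repairs it. The obvious fix is to pass to $|1-\overline z\zeta|^{1/2}$, which satisfies the triangle inequality, and set $d(z,z)=0$. But then each interior point is isolated at scales below $(1-|z|^2)^{1/2}$. For example, $B(0,1.1)=\D$, yet no point of $\{|\zeta|\le 1/2\}$ lies in a ball of radius $0.55$ centred anywhere else, so geometric doubling fails. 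Note that geometric doubling is a property of $(\D,d)$ alone; it has nothing to do with $\lambda$ being independent of $z$.

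Second, suppose you use a genuine metric comparable to the Euclidean one: your polar variant, or $|z-\zeta|$, which is what ``the Euclidean plane locally'' presupposes. Then the $z$-independent $\lambda(r)=r\,\widehat\omega(1-r)$ does not dominate $\mu_\omega(B(z,r))$. For $\omega(s)=1-s\in\widehat{\mathcal D}$ one has $\mu_\omega(B(0,r))=r^2-\frac23 r^3$, while $r\,\widehat\omega(1-r)=r^3/2$. Your sentence that a ball of radius $r$ lies in a box of side $\approx r$ ``whatever the distance of $z$ to the boundary'' is exactly what is false. For $r\ll 1-|z|$ the ball only fits in a box of side $\approx 1-|z|$, and its measure is governed by $\omega$ near $|z|$, not by $\widehat\omega(1-r)$. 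Capping $\lambda$ at $\mu_\omega(\D)$ for $r>1$ does nothing for this small-scale interior problem.

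\textbf{The missing idea is that the dominating function must depend on $z$.} The paper keeps the Euclidean metric, so separability, geometric doubling and non-atomicity are immediate. It takes
\[
\lambda_\omega(z,s)=2s\,\widehat\omega\bigl((|z|-s)_+\bigr).
\]
Then $\mu_\omega(B(z,s))\le\lambda_\omega(z,s)$, because each circle $|\zeta|=r$ meets $B(z,s)$ in an arc of length at most $2\pi s$. Doubling in $s$ and the local comparability required by \cite{HLYY12} both follow from $\widehat{\mathcal D}$.

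The price is on the kernel side. Since $\lambda_\omega(z,s)\ge 2s\,\widehat\omega(1-s)$ for $s\le 1$, the size condition $|B_z^\omega(\zeta)|\lesssim 1/\lambda_\omega(z,|z-\zeta|)$ is stronger than the one you wrote. Deducing it from $|B_z^\omega(\zeta)|\lesssim 1/(|1-\overline z\zeta|\,\omega_{1/|1-\overline z\zeta|})$ needs the comparison
\[
\widehat\omega\bigl((|z|-|z-\zeta|)_+\bigr)\le C\,\widehat\omega\bigl((1-|1-\overline z\zeta|)_+\bigr).
\]
This comes from $1-|z|\le|1-\overline z\zeta|$, $|z-\zeta|\le|1-\overline z\zeta|$ and one use of tail doubling. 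The same comparison at points $\xi$ of the segment $[z,z']$ gives the smoothness condition.

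A smaller point concerns your kernel sketch. One summation by parts does not suffice, because $1/\omega_{2n+1}$ can grow like $n^{\log_2 C}$. You need roughly $\log_2 C$ iterated differences on each dyadic block, as in Section~\ref{S:S4}.
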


The proof of Proposition \ref{pro:main-weak} employs the framework of Calderón–Zygmund operators on a space of non-homogeneous type. We recall these results below, following \cite{Hyt10}, \cite{HLYY12} and \cite{HM12}.

A triple $(X,d,\mu)$ is called a metric measure space if $(X,d)$ is a separable metric space and $\mu$ is a nonnegative
Borel measure on $X$. 
\begin{definition}
A metric space $(X,d)$ is called geometrically doubling if there exists a constant $N_0 \geq 1$ such that every ball $B(x,r)$ is covered by at most $N_0$ balls of radius $r/2$.
\end{definition}

\begin{remark}
Let \(X \subset \mathbb{R}^{n}\) be an open convex domain, equipped with the metric induced by the Euclidean metric on \(\mathbb{R}^{n}\). Then \(X\) is geometrically doubling. More precisely, every ball \(B_X(x,r)\) can be covered by at most \(5^{n}\) balls in \(X\) of radius \(r/2\). Thus, for $n = 2$, the unit disk $\mathbb{D}$, equipped with Euclidean metric $d(z,\zeta) = |z - \zeta|$, is geometrically doubling.
\end{remark}

\begin{definition}
A metric measure space $(X,d,\mu)$ is upper doubling with a dominating function $\lambda$ if there is a function
\[
 \lambda:X\times(0,\infty)\longrightarrow(0,\infty)
\]
and a constant $C = C(\lambda) \ge1$ such that, for every $x\in X$, the function
$s\mapsto\lambda(x,s)$ is nondecreasing and
\begin{equation*}
 \mu(B(x,s))
 \le\lambda(x,s)
 \le C \lambda(x,s/2),
 \quad s>0.
\end{equation*}
\end{definition}

\begin{remark}
As recalled in \cite[Remark~1.1(ii)]{HLYY12}, every upper-doubling
metric measure space admits a dominating function satisfying the
following local comparability property: there exists a constant
\(C = C(\lambda) \geq1\) such that
\begin{equation}\label{eq:general-local-comparability}
    \lambda(x,r) \leq C \lambda(y,r), \quad x,y\in X,\, d(x,y)\leq r.
\end{equation}
Throughout the paper, dominating functions are understood to satisfy
this local comparability property.
\end{remark}

\begin{definition}\label{def:sd}
Let $(X,d,\mu)$ be upper doubling with dominating function $\lambda$.  A
function
\[
 K:(X\times X)\setminus\{(x,x):x\in X\}\longrightarrow\C
\]
is a standard kernel of order $\tau\in(0,1]$ if there is a constant
$C = C(K,\lambda)$ such that
\begin{equation*}
 |K(x,y)|
 \le\frac{C}{\lambda(x,d(x,y))},
 \quad x\ne y,
\end{equation*}
and
\begin{align*}
|K(x,y)-K(x',y)| +|K(y,x)-K(y,x')| \le C \frac{d(x,x')^\tau}{d(x,y)^\tau\lambda(x,d(x,y))}
\end{align*}
whenever $d(x,y)\ge2d(x,x')$.
\end{definition}

Let \(K\) be a standard kernel. A linear operator \(T\), defined on
bounded measurable functions with bounded support, is called a
Calderón--Zygmund operator associated with \(K\) if
\[
Tf(x) = \int_X K(x,y)f(y)\,\dd \mu(y)
\]
for every bounded measurable function \(f\) with bounded support and every \(x\notin \operatorname{supp}f\).

The following result, proved by Hytönen, Liu, Yang, and Yang \cite{HLYY12}, plays a central role in our proof.  

\begin{lemma}[{\cite[Theorem~1.1]{HLYY12}}]\label{thm:HLYY}
Let $(X,d,\mu)$ be a separable, geometrically doubling, upper-doubling
metric measure space whose dominating function satisfies
\eqref{eq:general-local-comparability}. Assume that
\(\mu(\{x\})=0\) for every \(x\in X\). Let \(T\) be a
Calder\'on--Zygmund operator associated with a standard kernel. Then the following statements are equivalent:
\begin{itemize}
  \item $
    T:L^1_\mu\to L^{1,\infty}_\mu
$;
\item \(T\) is bounded on \(L^2_\mu\).
\end{itemize}
\end{lemma}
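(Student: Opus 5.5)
Since this is the cited result \cite[Theorem~1.1]{HLYY12}, I would not reprove it in full. I would follow the Nazarov--Treil--Volberg and Tolsa strategy transplanted to the upper-doubling setting. Two structural facts from the hypotheses are used repeatedly.
\begin{itemize}
\item Geometric doubling together with the upper-doubling property of $\mu$ guarantees that, for suitable constants $\alpha>1$ and $\beta>\alpha^{\log_2 C(\lambda)}$, $\mu$-almost every point is the centre of arbitrarily small $(\alpha,\beta)$-doubling balls, i.e.\ balls with $\mu(\alpha B)\le\beta\mu(B)$. Such balls replace the cubes of the homogeneous theory.
\item The local comparability \eqref{eq:general-local-comparability} lets one move the base point of $\lambda$ freely inside a ball. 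This makes the size and smoothness conditions of Definition~\ref{def:sd} usable on annuli: $\int_{d(x,y)\ge 2r}|K(x,y)-K(x',y)|\,\dd\mu(y)\lesssim 1$ for $d(x,x')\le r$, by summing over dyadic annuli and using $\mu(B(x,2^{k}r))\le\lambda(x,2^kr)$.
\end{itemize}

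For $L^2\Rightarrow L^1\to L^{1,\infty}$, I would run a Calderón--Zygmund decomposition adapted to non-doubling measures, at height $t$, of a bounded $f$ with bounded support. By a Besicovitch/Vitali-type covering in the geometrically doubling space, select balls $B_j$ on which the $\mu$-average of $|f|$ over $6B_j$ is comparable to $t$, with bounded overlap and $\sum_j\mu(6B_j)\lesssim t^{-1}\|f\|_{L^1_\mu}$. Let $R_j$ be the smallest $(6,\beta)$-doubling ball of the form $6^kB_j$. Then set
\[
b=\sum_j\bigl(f\mathbf 1_{B_j}w_j-\varphi_j\bigr),\qquad g=f-b,
\]
where $w_j$ is a partition of unity and $\varphi_j$ is supported on $R_j$, has the same integral as $f\mathbf 1_{B_j}w_j$, and satisfies $\|\varphi_j\|_\infty\mu(R_j)\lesssim\int_{B_j}|f|\,\dd\mu$. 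Then $\|g\|_\infty\lesssim t$ and $\|g\|_{L^1_\mu}\lesssim\|f\|_{L^1_\mu}$, so the $L^2$ bound and Chebyshev handle $Tg$. Away from $\bigcup_j 2B_j$, the term $T(f\mathbf 1_{B_j}w_j-\varphi_j)$ is split into two parts.
\begin{itemize}
\item On the complement of $2R_j$, the cancellation and the Hörmander bound above apply.
\item On $2R_j\setminus 2B_j$, use the size estimate. The sum over the annuli $6^{k}B_j\subset R_j$ is controlled because these balls are non-doubling, so their $\lambda$-masses decay geometrically. This is the quantity $K_{B_j,R_j}\lesssim1$ in Tolsa's notation.
\end{itemize}
The term $T\varphi_j$ on $2R_j$ is controlled by the $L^2$ bound and the $L^\infty$ bound on $\varphi_j$.

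For the converse, weak $(1,1)\Rightarrow L^2$, I would show that $T$ maps $L^\infty$ into a regularized BMO space (RBMO) adapted to $\lambda$ and then interpolate.

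For the first step, fix a ball $B$ and split $f=f\mathbf 1_{2B}+f\mathbf 1_{X\setminus 2B}$.
\begin{itemize}
\item The local part is controlled by Kolmogorov's inequality from the weak $(1,1)$ hypothesis: $\frac1{\mu(6B)}\int_B|T(f\mathbf 1_{2B})|^{1/2}\,\dd\mu\lesssim\|f\|_\infty^{1/2}$. This is the reason a $\frac12$-power RBMO variant is used.
\item The far part is compared with constants via the smoothness of $K$.
\item The dependence of these constants on the ball is handled through the same $K_{B,S}$ bookkeeping as above.
\end{itemize}

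For the second step, interpolate between $L^1\to L^{1,\infty}$ and $L^\infty\to$ RBMO using the sharp maximal function built from doubling balls. The key estimate is the good-$\lambda$ inequality $\|N f\|_{L^p_\mu}\lesssim\|M^\sharp f\|_{L^p_\mu}$, where $N$ is the doubling-ball maximal operator, valid when $\int f\,\dd\mu=0$ or $\mu(X)=\infty$. This gives boundedness on $L^p_\mu$ for $1<p<\infty$, in particular $p=2$.

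I expect the main obstacle to be this converse, specifically the good-$\lambda$ inequality, and the hypothesis $\mu(\{x\})=0$ must be available wherever the argument invokes the Lebesgue differentiation theorem along doubling balls. In the direct implication, the delicate step is constructing the $\varphi_j$ and the uniform bound on $K_{B_j,R_j}$ when only the dominating function $\lambda$, not $\mu$, is doubling.
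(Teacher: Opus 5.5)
The paper gives no argument of its own for this lemma and simply invokes \cite[Theorem~1.1]{HLYY12}. Your outline follows the standard non-homogeneous route on which that result rests: a Tolsa-type Calder\'on--Zygmund decomposition with doubling balls and the coefficients $K_{B_j,R_j}$ for $L^2\Rightarrow$ weak $(1,1)$, and, for the converse, weak $(1,1)\Rightarrow L^\infty\to\mathrm{RBMO}$ via Kolmogorov followed by sharp-maximal interpolation; so it is essentially the same approach.

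If you write it out, two details need fixing. First, $\|g\|_\infty\lesssim t$ requires $|f|\le t$ a.e.\ off the selected balls and the pointwise bound $\sum_j|\varphi_j|\lesssim t$; the bound $\|\varphi_j\|_\infty\mu(R_j)\lesssim\int_{B_j}|f|\,\dd\mu$ alone does not give it, and this pointwise bound is the genuinely delicate part of the construction. Second, a.e.\ existence of arbitrarily small doubling balls needs $\beta>\alpha^{\log_2 N_0}$ from geometric doubling; the condition $\beta>\alpha^{\log_2 C(\lambda)}$ is what gives large doubling balls and the geometric decay behind $K_{B_j,R_j}\lesssim 1$.
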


For a radial weight $\omega$, the induced positive measure on $\mathbb{D}$ is
\[
 \dd \mu_\omega(z) = \omega(z)\,\dd A(z).
\]

The disk $\mathbb{D}$ is separable and geometrically doubling, the measure
$\mu_\omega$ is non-atomic, and $P_\omega$ is bounded on $L_\omega^2$.
It therefore remains to verify the following two facts in order to apply
the theorem of Hyt\"onen, Liu, Yang, and Yang:

\begin{itemize}
  \item If \(\omega\in\widehat{\mathcal D}\), then
  \((\D,d,\mu_\omega)\) is upper doubling with a dominating function
  satisfying the local comparability property
  (Proposition~\ref{prop:upper-doubling}).
  \item The integral kernel of \(P_\omega\) is a standard kernel
  (Proposition~\ref{prop:standard-kernel}).
\end{itemize}

The first task begins with the following lemmas and culminates in Proposition \ref{prop:upper-doubling}.
Recall from \eqref{eq:widehat-D-definition} that, if
\(\omega\in\widehat{\mathcal D}\), then there exists a constant
\(C=C(\omega)\ge1\) such that
\begin{equation}\label{eq:tail-doubling-recalled}
  \widehat{\omega}(r)
  \le C\widehat{\omega}\left(\frac{1+r}{2}\right), \quad 0\le r<1.
\end{equation}
We shall use the following characterization of the $\widehat{\mathcal D}$ weight (see \cite[Lemma A(iii)]{PR16} and \cite[Lemma 2.1(vi)]{Pel16})
\begin{equation}\label{eq:standard-moment-tail}
  C^{-1} \widehat{\omega}\left(1-\frac1x\right)
  \le \omega_x
  \le C \widehat{\omega}\left(1-\frac1x\right), \quad x\ge1,
\end{equation}
for some constant $C$. Recall also $u_+:=\max\{u,0\}$ for  $u\in\mathbb R$.

\begin{lemma}\label{lem:tail-doubling-properties}
Let \(\omega\) be a radial weight in \(\widehat{\mathcal D}\). Then, for each fixed
\(z\in\overline{\mathbb D}\), the function $s\mapsto \widehat{\omega}\bigl((|z|-s)_+\bigr)$ is nondecreasing. Moreover, there exists a constant \(C=C(\omega)\ge1\), depending only on \(\omega\), such that
\begin{equation}\label{eq:tail-doubling}
  \widehat{\omega}\bigl((|z|-2t)_+\bigr)
  \le C\widehat{\omega}\bigl((|z|-t)_+\bigr), \quad z\in\overline{\mathbb D},\, t>0,
\end{equation}
and
\begin{equation}\label{eq:moment-tail-comparison}
  \widehat{\omega}\bigl((1-t)_+\bigr) \le 2C\omega_{1/t}, \quad 0<t<2.
\end{equation}
\end{lemma}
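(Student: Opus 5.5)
Three things have to be checked: that the function is monotone, that it doubles, and that it is comparable to moments. For monotonicity, note that $\widehat{\omega}$ is nonincreasing on $[0,1)$, since it is the tail integral of a nonnegative function. The map $s\mapsto(|z|-s)_+$ is nonincreasing, so the composition $s\mapsto \widehat{\omega}\bigl((|z|-s)_+\bigr)$ is nondecreasing. When $|z|=1$ we interpret $\widehat{\omega}(1)=0$; the argument is unaffected.

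For \eqref{eq:tail-doubling}, fix $z$ and $t>0$ and set $r=(|z|-2t)_+$ and $\rho=(|z|-t)_+$. The goal is to compare $\widehat{\omega}(r)$ with $\widehat{\omega}(\rho)$ using \eqref{eq:tail-doubling-recalled}.

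\textbf{Case $|z|-2t\ge 0$.} Then $r=|z|-2t$ and $\rho=|z|-t$. Since $|z|\le 1$, we have $1-\rho=1-|z|+t\ge t$. Moreover $1-r=1-|z|+2t\le 2(1-|z|+t)=2(1-\rho)$, so $\rho\le (1+r)/2$ fails in general. The right inequality is $\frac{1+r}{2}=1-\frac{1-r}{2}\ge 1-(1-\rho)=\rho$. Monotonicity then gives
\[
\widehat{\omega}(r)\le C\,\widehat{\omega}\Bigl(\frac{1+r}{2}\Bigr)\le C\,\widehat{\omega}(\rho).
\]

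\textbf{Case $|z|-2t<0$.} Here $r=0$.
\begin{itemize}
\item If also $|z|-t\le 0$, then $\rho=0$ and the inequality is trivial.
\item Otherwise $0<\rho=|z|-t<t\le 1/2$, since $|z|\le 1$. Then $\rho\le 1/2=\frac{1+0}{2}$, so
\[
\widehat{\omega}(0)\le C\,\widehat{\omega}(1/2)\le C\,\widehat{\omega}(\rho).
\]
\end{itemize}
Hence the doubling constant of $\widehat{\mathcal D}$ works in all cases.

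For \eqref{eq:moment-tail-comparison}, take $0<t<2$. If $t\ge1$, the left side is $\widehat{\omega}(0)$. By doubling,
\[
\omega_{1/t}\ge\omega_1\ge\int_{1/2}^1 r\omega(r)\,\dd r\ge \tfrac12\widehat{\omega}(1/2)\ge \tfrac1{2C}\widehat{\omega}(0),
\]
where the first step uses $1/t\le 1$ and $r^{1/t}\ge r$.

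If $t<1$, put $x=1/t>1$. I would avoid relying on the constant in \eqref{eq:standard-moment-tail} and argue directly:
\[
\omega_x\ge\int_{1-t/2}^1 r^x\omega(r)\,\dd r\ge (1-t/2)^{1/t}\,\widehat{\omega}(1-t/2).
\]
The factor satisfies $(1-t/2)^{1/t}\ge 1/2$ for $0<t<1$, because $\log(1-u)\ge -2u\log 2$ on $u\in(0,1/2]$ by concavity. Doubling gives $\widehat{\omega}(1-t)\le C\widehat{\omega}(1-t/2)$, and combining the two yields
\[
\widehat{\omega}(1-t)\le 2C\,\omega_{1/t}.
\]

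The main obstacle is keeping the constants uniform; enlarging $C$ to the maximum of the constants obtained in each step settles this. The one thing to double-check is the bound $(1-t/2)^{1/t}\ge1/2$: at $t=1$ the value is exactly $1/2$, and the function decreases in $t$, so the bound holds.
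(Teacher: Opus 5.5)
Your argument is correct, and for the monotonicity claim and \eqref{eq:tail-doubling} it is the paper's proof. You use the same case split according to the signs of $|z|-2t$ and $|z|-t$, and the same key observation $\frac{1+r}{2}-\rho=\frac{1-|z|}{2}\ge 0$.

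Two slips in that part need repair.
\begin{itemize}
\item The sentence asserting that ``$\rho\le (1+r)/2$ fails in general'' is false. Your very next line proves exactly this inequality, so delete the sentence.
\item In the subcase $|z|<2t<2|z|$, the chain ``$\rho<t\le 1/2$'' is wrong, because $t$ may exceed $1/2$ (take $|z|=0.9$, $t=0.8$). The bound you need still holds, since $\rho=|z|-t<|z|-|z|/2=|z|/2\le 1/2$; this is how the paper argues.
\end{itemize}

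Where you genuinely differ from the paper is \eqref{eq:moment-tail-comparison} for $0<t<1$.
\begin{itemize}
\item The paper quotes the lower half of the moment--tail characterization \eqref{eq:standard-moment-tail} from \cite{PR16,Pel16}.
\item You prove it directly. You restrict $\omega_{1/t}$ to $[1-t/2,1)$, use $(1-t/2)^{1/t}\ge 1/2$, and apply the $\widehat{\mathcal D}$ condition once at $r=1-t$. Your concavity argument for $\log(1-u)\ge -2u\log 2$ on $(0,1/2]$ is fine.
\end{itemize}
Your route makes the lemma self-contained. It also shows that one constant, the $\widehat{\mathcal D}$ constant itself, serves in all three assertions, with the factor $2$ arising naturally. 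Your closing remark about enlarging $C$ is therefore unnecessary.

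The paper's version is shorter but relies on an external result. It also tacitly takes $C$ to be the larger of the doubling constant and the constant in \eqref{eq:standard-moment-tail}. For $1\le t<2$ your argument essentially coincides with the paper's.
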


\begin{proof}
Fix \(z\in\overline{\mathbb D}\). If \(0<s_1\le s_2\), then $(|z|-s_1)_+ \ge (|z|-s_2)_+$.
Since \(\widehat{\omega}\) is nonincreasing, it follows that
\[
  \widehat{\omega}\bigl((|z|-s_1)_+\bigr)
  \le
  \widehat{\omega}\bigl((|z|-s_2)_+\bigr).
\]
Thus $s\mapsto \widehat{\omega}\bigl((|z|-s)_+\bigr)$ is nondecreasing.

We first prove \eqref{eq:tail-doubling}. Suppose that
\(0<t\le |z|/2\). Applying
\eqref{eq:tail-doubling-recalled} with \(r=|z|-2t\), we obtain
\[
  \widehat{\omega}\bigl((|z|-2t)_+\bigr) = \widehat{\omega}(|z|-2t) \le C\widehat{\omega} \left(\frac{1+|z|-2t}{2}\right) = C\widehat{\omega} \left(\frac{1+|z|}{2}-t\right).
\]
Since $ (1+|z|)/2 -t \ge |z|-t$ and \(\widehat{\omega}\) is nonincreasing, we conclude that
\[
  \widehat{\omega}\bigl((|z|-2t)_+\bigr) \le C\widehat{\omega}(|z|-t) = C\widehat{\omega}\bigl((|z|-t)_+\bigr).
\]

Suppose next that $\frac{|z|}{2}<t<|z|$. Then $(|z|-2t)_+=0$, and $0<|z|-t<\frac12$.
Applying \eqref{eq:tail-doubling-recalled} with \(r=0\), we obtain
\[
  \widehat{\omega}\bigl((|z|-2t)_+\bigr)= \widehat{\omega}(0) \le C\widehat{\omega}\left(\frac12\right) \le C \widehat{\omega}(|z|-t)  = C\widehat{\omega}\bigl((|z|-t)_+\bigr).
\]

Finally, if \(t\ge |z|\), then $(|z|-2t)_+ = (|z|-t)_+ = 0$, and \eqref{eq:tail-doubling} is immediate.

We next prove \eqref{eq:moment-tail-comparison}. Suppose first
that \(0<t\le1\). Setting \(x=1/t\) in the left-hand inequality
of \eqref{eq:standard-moment-tail}, we obtain $\widehat{\omega}(1-t) \le C\omega_{1/t}$.
Consequently,
\[
  \widehat{\omega}\bigl((1-t)_+\bigr) = \widehat{\omega}(1-t) \le C\omega_{1/t} \le 2C\omega_{1/t}.
\]

It remains to consider \(1<t<2\). In this case, $(1-t)_+=0$.
Applying \eqref{eq:tail-doubling-recalled} with \(r=0\), we have
\[
  \widehat{\omega}(0)
  \le
  C\widehat{\omega}\left(\frac12\right).
\]
Moreover, since \(1/t<1\), we have $s^{1/t}\ge s\ge\frac12$ for $\frac12\le s\le1$.
Therefore,
\[
  \omega_{1/t} \ge \int_{1/2}^1s^{1/t}\omega(s)\,\dd s \ge \frac12\int_{1/2}^1\omega(s)\,\dd s = \frac12 \widehat{\omega}\left(\frac12\right).
\]
It follows that
\[
  \widehat{\omega}\bigl((1-t)_+\bigr) =
  \widehat{\omega}(0)  \le C\widehat{\omega}\left(\frac12\right) \le 2C\omega_{1/t}.
\]
This proves \eqref{eq:moment-tail-comparison} and completes the
proof.
\end{proof}
The following elementary observations will be used repeatedly.

\begin{lemma}
  For every $z,\zeta\in\mathbb{D}$,
    \begin{equation}\label{eq:kernel-metric-distance}
      |1-\overline z\,\zeta| \ge |z-\zeta|.
    \end{equation}

  Consequently, if $z,\zeta, z' \in\mathbb{D}$ satisfy $|z-\zeta|\ge 2|z- z' |$,
    and if $\xi$ lies on the line segment joining $z$ and
    $ z' $, then
    \begin{equation}\label{eq:segment-kernel-distance}
      |1-\overline\zeta\,\xi|
      \ge \frac{|z-\zeta|}{2}.
    \end{equation}
\end{lemma}
\begin{comment}
\begin{proof}
  The identity
\[
 |1-\overline z\,\zeta|^{2}-|z-\zeta|^{2}
 =(1-|z|^{2})(1-|\zeta|^{2})
\]
immediately gives \eqref{eq:kernel-metric-distance}.

Suppose now that $|z-\zeta|\ge2|z- z' |$ and that $\xi$ lies on the line segment joining $z$ and
$ z' $. By \eqref{eq:kernel-metric-distance},
\begin{align*}
 |1-\overline\zeta\,\xi| \ge|\xi-\zeta| \ge|z-\zeta|-|\xi-z| \ge|z-\zeta|-|z- z' | \ge\frac{|z-\zeta|}{2}.
\end{align*}
This proves \eqref{eq:segment-kernel-distance}.
\end{proof}
\end{comment}
\begin{lemma}\label{lem:tail-size-estimates}
Let $\omega$ be a radial weight in $\widehat{\mathcal{D}}$, and let $ C \ge1$ be the constant in Lemma~\ref{lem:tail-doubling-properties}. Then, for every $z,\zeta\in\mathbb{D}$ with $z\ne\zeta$,
\begin{equation}\label{eq:tail-kernel-comparison}
 \widehat{\omega} \bigl((|z|-|z-\zeta|)_+\bigr) \le  C  \widehat{\omega} \bigl((1-|1-\overline z\,\zeta|)_+\bigr).
\end{equation}
Moreover, if $ z' \in\mathbb{D}$ satisfies $|z-\zeta|\ge2|z- z' |$ and $\xi$ lies on the line segment joining $z$ and $ z' $, then
\begin{equation}\label{eq:segment-tail-kernel-comparison}
 \widehat{\omega} \bigl((|z|-|z-\zeta|)_+\bigr) \le  C ^{2} \widehat{\omega} \bigl((1-|1-\overline\zeta\,\xi|)_+\bigr).
\end{equation}
\end{lemma}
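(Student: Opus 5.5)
Both inequalities reduce to monotonicity of $\widehat{\omega}$ combined with the doubling estimate \eqref{eq:tail-doubling} of Lemma~\ref{lem:tail-doubling-properties}. Since $\widehat{\omega}$ is nonincreasing, it is enough to compare the arguments: we want a lower bound for $(1-|1-\overline z\,\zeta|)_+$ in terms of $|z|$ minus a multiple of $|z-\zeta|$.

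The key elementary estimate is
\[
|1-\overline z\,\zeta| \le (1-|z|) + |z|\,|z-\zeta| \le 1-|z| + |z-\zeta|,
\]
which follows from writing $1-\overline z\zeta = (1-|z|^2) + \overline z(z-\zeta)$ and using $1-|z|^2\le 2(1-|z|)$. This gives the slightly weaker bound $|1-\overline z\zeta|\le 2(1-|z|)+|z-\zeta|$. To avoid the factor $2$ I would instead write $1-\overline z\zeta = (1-|z|)+ (|z|-\overline z\zeta)$ and note
\[
\bigl||z|-\overline z\zeta\bigr| = |z|\,\Bigl|\tfrac{z}{|z|}\cdot\tfrac{\overline z}{|z|}-\tfrac{\overline z}{|z|}\zeta\Bigr|=|z|\,\Bigl|\tfrac{z}{|z|}-\zeta\Bigr|\le |z|\bigl(1-|z|+|z-\zeta|\bigr).
\]
This still carries an extra $1-|z|$, so I accept a form $|1-\overline z\zeta|\le 2(1-|z|)+|z-\zeta|$. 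With it,
\[
1-|1-\overline z\zeta| \ge |z|-(1-|z|)-|z-\zeta|.
\]
This lower bound is awkward. I therefore split into cases according to whether $|z-\zeta|\ge 1-|z|$.

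\textbf{Case $|z-\zeta|\ge 1-|z|$.} Here $|1-\overline z\zeta|\le 1-|z|+|z-\zeta|\le 2|z-\zeta|$, where the first inequality is the first display above. Hence $(1-|1-\overline z\zeta|)_+\ge (1-2|z-\zeta|)_+\ge(|z|-2|z-\zeta|)_+$. Monotonicity of $\widehat{\omega}$ and \eqref{eq:tail-doubling} with $t=|z-\zeta|$ then give
\[
\widehat{\omega}\bigl((1-|1-\overline z\zeta|)_+\bigr)\le \widehat{\omega}\bigl((|z|-2t)_+\bigr).
\]
This points in the wrong direction. The inequality to prove places the larger quantity on the right: we need $\widehat{\omega}((|z|-t)_+)\le C\,\widehat{\omega}((1-|1-\overline z\zeta|)_+)$, so we need $(1-|1-\overline z\zeta|)_+$ to be at most something. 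That requires an \emph{upper} bound on the argument, i.e. a lower bound on $|1-\overline z\zeta|$. Use \eqref{eq:kernel-metric-distance}, $|1-\overline z\zeta|\ge |z-\zeta|=t$, together with $|1-\overline z\zeta|\ge 1-|z|$. These give
\[
1-|1-\overline z\zeta|\le 1-\max\{t,1-|z|\}.
\]
If $t\le 1-|z|$, this upper bound is $|z|\le |z|-t/2+\ldots$, which is not quite $|z|-t/2$. Better: $\max\{t,1-|z|\}\ge \tfrac12\bigl(t+1-|z|\bigr)$, so
\[
1-|1-\overline z\zeta|\le \tfrac{1+|z|}{2}-\tfrac t2 .
\]
Now apply \eqref{eq:tail-doubling-recalled} in the shifted form used in the proof of Lemma~\ref{lem:tail-doubling-properties}. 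Setting $r=|z|-t$ gives $\widehat{\omega}(|z|-t)\le C\,\widehat{\omega}\bigl(\tfrac{1+|z|-t}{2}\bigr)$, and $\tfrac{1+|z|-t}{2}\ge 1-|1-\overline z\zeta|$, so monotonicity yields \eqref{eq:tail-kernel-comparison} when $t\le|z|$. When $t>|z|$, the argument $(|z|-t)_+=0$, and I bound $\widehat{\omega}(0)\le C\,\widehat{\omega}(1/2)$ using $1-|1-\overline z\zeta|\le (1+|z|-t)/2<1/2$. This requires the constant $C$ of Lemma~\ref{lem:tail-doubling-properties} to be taken as that of \eqref{eq:tail-doubling-recalled}, which it can be.

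For \eqref{eq:segment-tail-kernel-comparison}, apply the same argument with $z$ replaced by $\xi$ and use \eqref{eq:segment-kernel-distance}, $|1-\overline\zeta\xi|\ge t/2$, together with $|1-\overline\zeta\xi|\ge 1-|\xi|\ge 1-|z|-|z-\xi|\ge 1-|z|-t/2$. This yields $1-|1-\overline\zeta\xi|\le \tfrac{1+|z|}{2}-\tfrac t8$ roughly, i.e. at most $\tfrac{1+(|z|-t/4)}{2}$. The doubling step then gives $\widehat{\omega}(|z|-t/4)\le C\,\widehat{\omega}(\ldots)$. Passing from $|z|-t$ to $|z|-t/4$ costs two applications of \eqref{eq:tail-doubling}, hence constants of order $C^3$. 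The main obstacle is bookkeeping these constants to reach exactly $C^2$. I expect a more careful convex combination, weighting $t/2$ and $1-|z|-t/2$ optimally, to save one factor. Failing that, the statement's constant is harmless to enlarge.
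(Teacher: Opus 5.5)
Your proof of \eqref{eq:tail-kernel-comparison} is correct, and it is the paper's argument in different coordinates. Your bound $1-|1-\overline z\,\zeta|\le\frac{1+(|z|-|z-\zeta|)}{2}$ is equivalent to the paper's $|z|-|z-\zeta|\ge 1-2|1-\overline z\,\zeta|$. The paper then applies \eqref{eq:tail-doubling} at a point of modulus one, which absorbs your separate case $|z-\zeta|>|z|$. Also, \eqref{eq:tail-doubling-recalled} automatically holds with the constant of Lemma~\ref{lem:tail-doubling-properties}, since it is \eqref{eq:tail-doubling} with $|z|=1$.

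The proof of \eqref{eq:segment-tail-kernel-comparison} has a genuine gap. Write $t=|z-\zeta|$ and $a=|1-\overline\zeta\,\xi|$. Your claimed bound $1-a\le\frac{1+|z|}{2}-\frac{t}{8}$ does not follow from $a\ge t/2$ and $a\ge 1-|z|-t/2$, and it is false. Take $z=1/2$, $\zeta\to 1$, and $z'=\xi=(z+\zeta)/2$, so that $t=2|z-z'|$. Then $1-a\to\frac{3}{4}$, while $\frac{1+|z|}{2}-\frac{t}{8}\to\frac{11}{16}$. The same example shows that $1-a\le\frac{1+(|z|-ct)_+}{2}$ fails for every $c>0$. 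So no single application of \eqref{eq:tail-doubling-recalled} started at $r=|z|-ct$ can work, and the better convex combination you hope for does not exist. Your $C^3$ chain also ends by invoking this false bound, so enlarging the constant does not rescue the argument as written.

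What is missing is the right way to combine your two inequalities. Adding them gives $2a\ge 1-|z|$; together with $2a\ge t$ this yields $4a\ge (1-|z|)+t$, that is,
\[
(|z|-|z-\zeta|)_+\ge\bigl(1-4|1-\overline\zeta\,\xi|\bigr)_+ .
\]
Monotonicity and two applications of \eqref{eq:tail-doubling} at a point of modulus one, with steps $2a$ and then $a$, give
\[
\widehat{\omega}\bigl((|z|-|z-\zeta|)_+\bigr)\le\widehat{\omega}\bigl((1-4a)_+\bigr)\le C\,\widehat{\omega}\bigl((1-2a)_+\bigr)\le C^{2}\,\widehat{\omega}\bigl((1-a)_+\bigr).
\]
This is exactly the constant claimed. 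In your coordinates it reads $1-a\le\frac{3+(|z|-t)}{4}$: two iterations of $r\mapsto\frac{1+r}{2}$ starting from $r=|z|-t$. The doubling is spent on the boundary side rather than on shrinking $t$. This is the paper's proof, which obtains $1-|z|\le 2a$ directly from $1-|z|\le|1-\overline\zeta\,z|\le a+|\xi-z|\le 2a$.
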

\begin{proof}
Let \(z,\zeta\in\mathbb{D}\) with \(z\ne\zeta\). By
\eqref{eq:kernel-metric-distance}, $|z-\zeta|\le |1-\overline z\,\zeta|$.
Moreover,
\[
1-|z| \le 1-|z||\zeta| \le |1-\overline z\,\zeta|.
\]
Therefore,
\[
|z|-|z-\zeta| =1-\bigl(1-|z|+|z-\zeta|\bigr) \ge 1-2|1-\overline z\,\zeta|.
\]
Hence
\[
(|z|-|z-\zeta|)_+
\ge (1-2|1-\overline z\,\zeta|)_+.
\]
Since \(\widehat{\omega}\) is nonincreasing, it follows that
\[
\widehat{\omega}\bigl((|z|-|z-\zeta|)_+\bigr) \le
\widehat{\omega}\bigl((1-2|1-\overline z\,\zeta|)_+\bigr)\le  C \widehat{\omega}\bigl((1-|1-\overline z\,\zeta|)_+\bigr).
\]
This proves \eqref{eq:tail-kernel-comparison}.

Suppose next that \( z' \in\mathbb{D}\) satisfies $|z-\zeta|\ge2|z- z' |$
and that \(\xi\) lies on the line segment joining \(z\) and
\( z' \). Then
\[
|\xi-z|
\le |z- z' |
\le \frac{|z-\zeta|}{2}.
\]
By \eqref{eq:segment-kernel-distance}, $|1-\overline\zeta\,\xi| \ge |z-\zeta|/2$,
and hence
\[
|z-\zeta| \le2|1-\overline\zeta\,\xi| \quad\text{and}\quad |\xi-z| \le |1-\overline\zeta\,\xi|.
\]
Furthermore,
\begin{align*}
1-|z| &\le |1-\overline\zeta\,z| \le |1-\overline\zeta\,\xi| +|\overline\zeta(\xi-z)|\\
 &\le |1-\overline\zeta\,\xi|+|\xi-z| \le2|1-\overline\zeta\,\xi|.
\end{align*}
Consequently,
\[
|z|-|z-\zeta| =1-\bigl(1-|z|+|z-\zeta|\bigr) \ge1-4|1-\overline\zeta\,\xi|.
\]
Thus,
\[
(|z|-|z-\zeta|)_+
\ge(1-4|1-\overline\zeta\,\xi|)_+.
\]
Using again the fact that \(\widehat{\omega}\) is nonincreasing and applying
the doubling estimate twice, we obtain
\[
\widehat{\omega}\bigl((|z|-|z-\zeta|)_+\bigr)\le \widehat{\omega}\bigl((1-4|1-\overline\zeta\,\xi|)_+\bigr) \le  C ^{2} \widehat{\omega}\bigl((1-|1-\overline\zeta\,\xi|)_+\bigr).
\]
This proves \eqref{eq:segment-tail-kernel-comparison} and completes
the proof.
\end{proof}

\begin{proposition}\label{prop:upper-doubling}
Let \(\omega\) be a radial weight in \(\widehat{\mathcal D}\), and let
\(C\geq 1\) be the constant in
Lemma~\ref{lem:tail-doubling-properties}. For every \(z\in\D\) and
\(s>0\), define
\begin{equation*}
    \lambda_\omega(z,s) = 2s\,\widehat{\omega}\bigl((|z|-s)_+\bigr),
\end{equation*}
where \((u)_+=\max\{u,0\}\). Then the function \(s\mapsto\lambda_\omega(z,s)\) is nondecreasing, and
\begin{equation}\label{eq:measure-upper}
    \mu_\omega(B(z,s)\cap\D) \leq \lambda_\omega(z,s), \quad z\in\D,\, s>0.
\end{equation}
Moreover,
\begin{equation}\label{eq:lambda-upper-doubling}
    \lambda_\omega(z,s) \leq 2C\lambda_\omega(z,s/2), \quad z\in\D, \, s>0,
\end{equation}
and
\begin{equation}\label{eq:lambda-local-comparability}
    \lambda_\omega(z,s) \leq C\lambda_\omega(\zeta,s), \quad z,\zeta\in\D, \,|z-\zeta|\leq s.
\end{equation}
Consequently, \((\D,d,\mu_\omega)\) is upper doubling with dominating function \(\lambda_\omega\), which also satisfies the local comparability property.
\end{proposition}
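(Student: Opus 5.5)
Monotonicity of $s\mapsto\lambda_\omega(z,s)$ is immediate: $s\mapsto 2s$ is increasing and positive, and $s\mapsto\widehat{\omega}((|z|-s)_+)$ is nondecreasing by Lemma~\ref{lem:tail-doubling-properties}. For \eqref{eq:measure-upper} I will work in polar coordinates. The ball $B(z,s)\cap\D$ lies in the annulus $\{(|z|-s)_+\le|\zeta|<1\}$, and within each circle $|\zeta|=r$ its angular extent is at most an arc of Euclidean length $\le\min\{2\pi r,\,\pi s\}$ (a chord-length bound: points of the circle of radius $r$ within distance $s$ of $z$ form an arc of length at most $\pi s$, since chord $\ge 2/\pi$ times arc). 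With $\dd A=\dd x\,\dd y/\pi$ this gives
\[
\mu_\omega(B(z,s)\cap\D)\le \frac1\pi\int_{(|z|-s)_+}^{1}\min\{2\pi r,\pi s\}\,\omega(r)\,\dd r\le s\int_{(|z|-s)_+}^1\omega(r)\,\dd r\le\lambda_\omega(z,s).
\]
The factor $2$ in $\lambda_\omega$ leaves slack, so I will not need to optimize the arc bound; it suffices to show the angular measure of the slice is at most $2s$ in arc-length terms. This is the only place where some care with geometry is needed.

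For \eqref{eq:lambda-upper-doubling}, I will apply \eqref{eq:tail-doubling} with $t=s/2$:
\[
\lambda_\omega(z,s)=2s\,\widehat\omega((|z|-s)_+)\le 2sC\,\widehat\omega((|z|-s/2)_+)=2C\lambda_\omega(z,s/2).
\]

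For \eqref{eq:lambda-local-comparability}, suppose $|z-\zeta|\le s$. Then $|\zeta|\le|z|+s$, so $|\zeta|-2s\le|z|-s$, hence $(|\zeta|-2s)_+\le(|z|-s)_+$. Since $\widehat\omega$ is nonincreasing, this gives $\widehat\omega((|z|-s)_+)\le\widehat\omega((|\zeta|-2s)_+)\le C\widehat\omega((|\zeta|-s)_+)$, again using \eqref{eq:tail-doubling} with $t=s$. Multiplying by $2s$ yields the claim.

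The remaining point is that $\lambda_\omega$ takes values in $(0,\infty)$. This holds because $(|z|-s)_+<1$ and $\widehat\omega>0$ on $[0,1)$. Combining these facts gives the definition of upper doubling, with the local comparability property. The main obstacle is the geometric slice estimate in \eqref{eq:measure-upper}; everything else is a direct consequence of Lemma~\ref{lem:tail-doubling-properties}.
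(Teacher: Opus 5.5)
Your arguments for monotonicity, for \eqref{eq:lambda-upper-doubling} (via \eqref{eq:tail-doubling} with $t=s/2$), for \eqref{eq:lambda-local-comparability} (via $(|\zeta|-2s)_+\le(|z|-s)_+$ and \eqref{eq:tail-doubling} with $t=s$), and for the positivity of $\lambda_\omega$ are correct and match the paper. The gap is in the slice estimate for \eqref{eq:measure-upper}, which is exactly the step you flagged.

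\textbf{The arc bound is false.} You claim the part of the circle $|\zeta|=r$ lying in $B(z,s)$ has length at most $\min\{2\pi r,\pi s\}$. This fails for $z=0$ and $s/2<r<\min\{s,1\}$: the whole circle lies in the ball, and its length is $2\pi r>\pi s$. The chord--arc inequality you invoke (chord $\ge\frac{2}{\pi}\,$arc) holds only for arcs subtending an angle at most $\pi$, and you never verify this.

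\textbf{The intermediate inequality is false too.} As a consequence, your bound $\mu_\omega(B(z,s)\cap\D)\le s\,\widehat{\omega}((|z|-s)_+)$ does not hold in general. For $z=0$ one has $\mu_\omega(B(0,s))=2\int_0^s r\omega(r)\,\dd r$. Take $0<s<1$, $\delta<s/2$, and let $\omega=1+M$ on $[s-\delta,s)$ and $\omega=1$ elsewhere; this is a $\widehat{\mathcal D}$-weight. For $M$ large, $\mu_\omega(B(0,s))\ge 2M\delta(s-\delta)$ exceeds $s\,\widehat{\omega}(0)=s(1+M\delta)$. So the factor $2$ in $\lambda_\omega$ is not slack; the regime $r\le s$ uses all of it.

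\textbf{The repair is the paper's case split.} Rotate so that $z=\rho\ge0$, and let $E_r$ be the set of angles $\theta$ with $|re^{i\theta}-\rho|<s$.
\begin{enumerate}
\item If $r\le s$, the trivial bound $r|E_r|\le 2\pi r\le 2\pi s$ suffices.
\item If $r>s$ and $\theta\in E_r$, then $r^2+\rho^2-2r\rho\cos\theta<s^2<r^2$ forces $\cos\theta>0$, so the slice lies in an open semicircle. Now $r|\sin\theta|<s$ gives $|E_r|\le 2\arcsin(s/r)\le \pi s/r$. Your chord--arc inequality would also be legitimate here.
\end{enumerate}
Hence $r|E_r|\le 2\pi s$ for every $r$, and
$\mu_\omega(B(z,s)\cap\D)=\frac1\pi\int_{(\rho-s)_+}^{\min\{1,\rho+s\}} r|E_r|\,\omega(r)\,\dd r\le 2s\,\widehat{\omega}((\rho-s)_+)=\lambda_\omega(z,s)$.

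Your remark that it suffices to bound the arc length by $2\pi s$ names the right target. The argument you sketch does not reach it without this split.
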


\begin{proof}
For a fixed $z \in \mathbb{D}$, the function
$s \mapsto \widehat{\omega}((|z| - s)_+)$ is nondecreasing. Hence, the function $\lambda_{\omega}(z,s)$ with respect to the
variable $s$.

We next prove the inequality \eqref{eq:measure-upper}. Fix
\(z\in\mathbb D\), and write \(\rho=|z|\). Since both the measure
\(\mu_\omega\) and the Euclidean metric are invariant under
rotations, we may assume that \(z=\rho\). For \(0<r<1\), set
\[
 E_r = \left\{ \theta\in[-\pi,\pi): |re^{i\theta}-\rho|<s \right\}.
\]
We claim that
\[
 r|E_r|\le 2\pi s.
\]
Indeed, if \(r\le s\), then $r|E_r|\le 2\pi r\le 2\pi s$. Suppose that \(r>s\). If \(E_r=\varnothing\), the assertion is
immediate. If \(E_r\ne\varnothing\), then \(\rho>0\). For every
\(\theta\in E_r\),
\[
 r|\sin\theta| \le |re^{i\theta}-\rho| <s.
\]
Moreover, $r^2+\rho^2-2r\rho\cos\theta <s^2 <r^2$, so \(\cos\theta>0\). Hence
\[
 |E_r| \le 2\arcsin(s/r) \le \pi s/r,
\]
and therefore $ r|E_r| \le \pi s \le 2\pi s$.

It follows that
\begin{align*}
 \mu_{\omega}(B(z,s)\cap\D)
 &= \frac1\pi \int_{(\rho-s)_+}^{\min\{1,\rho+s\}} r\omega(r)|E_r|\,\dd r \\
 &\le 2s \int_{(\rho-s)_+}^{\min\{1,\rho+s\}} \omega(r)\,\dd r \le 2s \widehat{\omega} ((\rho-s)_+) = \lambda_\omega(z,s).
\end{align*}
This proves the inequality \eqref{eq:measure-upper}.

By Lemma \ref{lem:tail-doubling-properties}, we have
\begin{align*}
 \lambda_{\omega}(z,s) = 2s\widehat{\omega}((|z| - s)_+) \le 2 C s \widehat{\omega}((|z| - (s/2))_+) = 2 C \lambda_{\omega}(z,s/2).
\end{align*}
Hence, $(\mathbb D,d,\mu_\omega)$ is upper doubling.

Finally, let \(z,\zeta\in\D\) satisfy \(|z-\zeta|\leq s\). By the
reverse triangle inequality,
\[
  |z| \geq |\zeta|-|z-\zeta| \geq |\zeta|-s.
\]
Consequently, $(|z|-s)_+ \geq (|\zeta|-2s)_+$. Since \(\widehat{\omega}\) is nonincreasing, Lemma
\ref{lem:tail-doubling-properties} yields
\begin{align*}
  \widehat{\omega}\bigl((|z|-s)_+\bigr) \leq \widehat{\omega}\bigl((|\zeta|-2s)_+\bigr) \leq C\widehat{\omega}\bigl((|\zeta|-s)_+\bigr).
\end{align*}
Multiplying by \(2s\), we obtain
\[
  \lambda_\omega(z,s) \leq C\lambda_\omega(\zeta,s),
\]
which proves the inequality \eqref{eq:lambda-local-comparability}. The proof is complete now.
\end{proof}

Recall that
\[
P_\omega f(z) = \int_{\mathbb D} f(\zeta)\overline{B_z^\omega(\zeta)} \,\dd\mu_\omega(\zeta), \quad \dd\mu_\omega=\omega\,\dd A.
\]
Thus the integral kernel of \(P_\omega\) is
\[K_\omega(z,\zeta)=\overline{B_z^\omega(\zeta)}.\]
The second task is therefore to prove the following proposition.

\begin{proposition}\label{prop:standard-kernel}
There exists a constant $C>0$, depending only on
$\omega$, such that the integral kernel $(z,\zeta)\longmapsto\overline{B_z^{\omega}(\zeta)}$ is a standard kernel of order $1$ on $(\D,|\cdot|,\mu_{\omega})$ with dominating function $\lambda_\omega$. More precisely,
\begin{equation*}
 |B_z^{\omega}(\zeta)| \le \frac{C}{\lambda_\omega(z,|z-\zeta|)}, \quad z\ne\zeta,
\end{equation*}
and, whenever $|z-\zeta|\ge2|z- z'|$,
\begin{align*}
 |B_\zeta^{\omega}(z)-B_\zeta^{\omega}(z')| + |B_z^{\omega}(\zeta) - B_{z'}^{\omega}(\zeta)| \le C \frac{|z- z'|}{|z-\zeta|\lambda_\omega(z,|z-\zeta|)}.
\end{align*}
\end{proposition}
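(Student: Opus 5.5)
Both estimates will follow from two pointwise bounds on the $\widehat{\mathcal D}$-kernel, which I would prove in Section~\ref{S:S4}:
\begin{equation*}
 |B_z^\omega(\zeta)| \le \frac{C}{|1-\overline z\zeta|\,\widehat{\omega}\bigl((1-|1-\overline z\zeta|)_+\bigr)},
 \qquad
 |(B_\zeta^\omega)'(\xi)| \le \frac{C}{|1-\overline\zeta\xi|^{2}\,\widehat{\omega}\bigl((1-|1-\overline\zeta\xi|)_+\bigr)}.
\end{equation*}
To get them, I would start from the series $B_z^\omega(\zeta)=\sum_n (\overline z\zeta)^n/(2\omega_{2n+1})$. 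Write $w=\overline z\zeta$ and $t=|1-w|$. The moments $\omega_{2n+1}$ are nonincreasing, so Abel summation gives
\[
(1-w)\sum_n w^n/\omega_{2n+1} = 1/\omega_1+\sum_n w^{n+1}\bigl(1/\omega_{2n+3}-1/\omega_{2n+1}\bigr),
\]
and the differences are nonnegative. Then I split the sum at $N\approx 1/t$:
\begin{itemize}
\item For $n\le N$, the terms are bounded by the telescoping sum, which is at most $1/\omega_{2N+1}$.
\item For $n>N$, I sum by parts a second time using $\sum_{k\ge n}w^k=w^n/(1-w)$. This gives $\lesssim t^{-1}/\omega_{2N+1}$ times a bounded factor, because $|w|^n$ is summed against decreasing positive increments.
\end{itemize}
Combining the two pieces gives $|B_z^\omega(\zeta)|\lesssim t^{-1}\omega_{1/t}^{-1}$ when $t\le 1$. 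Now \eqref{eq:moment-tail-comparison}, together with the doubling of $\omega_x$ in $x$ (a consequence of \eqref{eq:standard-moment-tail} and \eqref{eq:tail-doubling-recalled}), converts $\omega_{1/t}$ into $\widehat{\omega}((1-t)_+)$. The range $1<t<2$ is covered by the same lemma. The derivative bound is proved the same way from $\sum n w^{n-1}/\omega_{2n+1}$: one extra Abel summation, with $n\lesssim 1/t$ in the head of the sum, gives the additional factor $t^{-1}$.

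Given these, the size estimate is direct. By \eqref{eq:kernel-metric-distance}, $|1-\overline z\zeta|\ge|z-\zeta|$, and by \eqref{eq:tail-kernel-comparison},
\[
\widehat{\omega}\bigl((|z|-|z-\zeta|)_+\bigr)\le C\,\widehat{\omega}\bigl((1-|1-\overline z\zeta|)_+\bigr).
\]
Hence
\[
|B_z^\omega(\zeta)|\lesssim \frac{1}{|z-\zeta|\,\widehat{\omega}((|z|-|z-\zeta|)_+)}\approx \frac{1}{\lambda_\omega(z,|z-\zeta|)}.
\]

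For smoothness, assume $|z-\zeta|\ge 2|z-z'|$. The map $\xi\mapsto B^\omega_\zeta(\xi)$ is analytic, so I integrate its derivative along the segment $[z,z']$:
\[
|B_\zeta^\omega(z)-B_\zeta^\omega(z')|\le |z-z'|\sup_{\xi\in[z,z']}|(B^\omega_\zeta)'(\xi)|.
\]
On this segment, \eqref{eq:segment-kernel-distance} gives $|1-\overline\zeta\xi|\ge|z-\zeta|/2$, and \eqref{eq:segment-tail-kernel-comparison} gives
\[
\widehat{\omega}\bigl((1-|1-\overline\zeta\xi|)_+\bigr)\ge C^{-2}\,\widehat{\omega}\bigl((|z|-|z-\zeta|)_+\bigr).
\]
Together these yield the bound $|z-z'|/(|z-\zeta|\lambda_\omega(z,|z-\zeta|))$.

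The second difference, $B_z^\omega(\zeta)-B_{z'}^\omega(\zeta)$, is handled the same way. Since $B_z^\omega(\zeta)=\overline{B_\zeta^\omega(z)}$, it equals the conjugate of the first difference, so the same argument applies. In both cases I use that $|1-\overline{\zeta}\xi|$ is symmetric under conjugation.

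The main obstacle is the derivative kernel bound. It needs a clean two-step summation-by-parts argument that uses only the monotonicity and doubling of the moments, with no regularity of $\omega$. I would first attempt the direct dyadic-block splitting described above. If the block estimates lose constants, I would instead use the representation $\sum_n w^n/\omega_{2n+1}$ with $n\sim 2^j$ blocks, applying doubling of $\omega_{2n+1}$ across blocks and the geometric decay of $|w|^{2^j}$ beyond $2^j\gtrsim 1/t$.
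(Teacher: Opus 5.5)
Your deduction of the proposition from the two kernel bounds is exactly the paper's argument, and it is correct. The size estimate uses \eqref{eq:kernel-metric-distance} and \eqref{eq:tail-kernel-comparison}. The smoothness estimate integrates $(B^\omega_\zeta)'$ over $[z,z']$ using \eqref{eq:segment-kernel-distance} and \eqref{eq:segment-tail-kernel-comparison}. The second difference follows from Hermitian symmetry.

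\textbf{The gap: the tail $n>N$.} The problem is in your proof of the kernel bounds themselves. Put $a_n=1/\omega_{2n+1}$ and $d_n=a_{n+1}-a_n\ge0$. A second Abel summation produces $d_n-d_{n-1}$, which has no sign in general. ``Decreasing increments'' means $a_n$ is concave, and this already fails for the standard weights $W_\alpha$ with $\alpha>0$. There $a_n\asymp n^{1+\alpha}$ and $d_n-d_{n-1}\asymp n^{\alpha-1}>0$. Bounding the tail in absolute value then gives
$t^{-1}\sum_{n>N}|d_n-d_{n-1}|\,|w|^n\asymp t^{-1}(1-|w|)^{-\alpha}$, which is unbounded as $|w|\to1$ with $t=|1-w|$ fixed. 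Your fallback has the same defect. The factor $|w|^{2^j}$ is small only once $2^j\gtrsim 1/(1-|w|)$, not $1/t$. When $1-|w|\ll|1-w|$, the whole range $1/t\lesssim n\lesssim 1/(1-|w|)$ must be controlled by cancellation in $w^n$ alone.

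\textbf{Monotonicity and doubling cannot suffice.} Take $a_0=1$ and $a_n=2^k$ for $2^k\le n<2^{k+1}$. This sequence is nondecreasing with $a_{2n}=2a_n$. Yet at $w=-r$ one finds $\sum_n a_n w^n\gtrsim (1-r)^{-1}$, while $t\approx 2$ and the target bound is $O(1)$.

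\textbf{What is missing.} You need smoothness of the coefficient function. This is automatic, with no regularity of $\omega$ needed, because $x\mapsto\omega_{x+1}$ is a Laplace transform. Lemma~\ref{lem:M-derivatives} gives $|M^{(k)}(x)|\lesssim_k M(x)/x^k$, hence $|\Phi^{(k)}(x)|\lesssim_k \Phi(x)/x^k$ by Lemma~\ref{lem:Phi}. In addition, the number of summations by parts must exceed the growth exponent $\log_2 C$ of $\Phi$, so two steps cannot work in general.

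\textbf{How the paper does it.} The paper cuts the series into smooth dyadic blocks $\eta^N_j$ at scale $N=2/|1-\overline z\zeta|$. In each block it performs $N_0=\lceil\log_2(4C)\rceil$ summations by parts. This gains $(|1-\overline z\zeta|\,2^jN)^{-N_0}=2^{-(j+1)N_0}$ against the growth $\Phi(2^jN)\le C^j\Phi(N)$. The derivative bound is handled the same way with $N_1=\lceil\log_2(8C)\rceil$. With that input, or simply citing \cite{PRWW26}, the rest of your proof goes through.
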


For the proof of Proposition \ref{prop:standard-kernel}, we need the weighted Bergman kernel estimate that is due to Pennanen, Rättyä, Wang, and Wu \cite{PRWW26}.

\begin{proposition}\label{lem:kernel-estimate}
  Let $\omega$ be a radial weight in $\widehat{\mathcal{D}}$.  Then there is a  constant $C = C(\omega) > 0$, depending only on $\omega$, such that, for every $z,\zeta\in\D$,
  \begin{equation}\label{eq:kernel-estimate}
    |B_z^{\omega}(\zeta)|
    \le \frac{C}{|1-\overline z\,\zeta|\,\omega_{1/|1-\overline z\,\zeta|}},
  \end{equation}
  and
  \begin{equation}\label{eq:kernel-estimate-derivative}
    |\partial_\zeta B_z^{\omega}(\zeta)|
    \le \frac{C}{|1-\overline z\,\zeta|^2\,\omega_{1/|1-\overline z\,\zeta|}}.
  \end{equation}  
\end{proposition}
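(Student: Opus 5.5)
We prove both estimates by working with the power series $B_z^\omega(\zeta)=\sum_{n\ge0}\frac{(\overline z\zeta)^n}{2\omega_{2n+1}}$. Since it depends only on $w=\overline z\zeta$, write $B_z^\omega(\zeta)=F(w)$ with $F(w)=\sum_n w^n/(2\omega_{2n+1})$. Then $\partial_\zeta B_z^\omega(\zeta)=\overline z F'(w)$, and $|\overline z|\le 1$. It therefore suffices to show
\[
|F(w)|\lesssim \frac{1}{|1-w|\,\omega_{1/|1-w|}},\qquad |F'(w)|\lesssim \frac{1}{|1-w|^2\,\omega_{1/|1-w|}},\qquad w\in\D.
\]
Set $\delta=|1-w|\in(0,2)$ and $N=\lceil 1/\delta\rceil$. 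If $\delta$ is bounded below, say $\delta\ge 1/2$, no cancellation is needed. Indeed, $\omega_{1/\delta}$ is comparable to $\omega_0$, and the inequality $\widehat\omega(r)\le C\widehat\omega(\frac{1+r}2)$ gives at most polynomial growth of $1/\omega_{2n+1}$. Hence $|F(w)|\le\sum|w|^n/\omega_{2n+1}$ is bounded as long as $|w|\le 1-c$. For $\delta\ge1/2$ this holds unless $w$ is near $-1$, and that case is handled by the summation by parts below. The essential case is $\delta$ small.

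\textbf{Step 1 (moment regularity).} We first collect the properties of $a_n:=1/(2\omega_{2n+1})$ that will be used. The sequence is positive and nondecreasing. It satisfies $a_{2n}\le C a_n$: this follows from \eqref{eq:standard-moment-tail} together with the doubling of $\widehat\omega$, in the form $\omega_{2x}\ge c\,\omega_x$. Consequently $\sum_{n\le N}a_n\lesssim N a_N$, and, by the same dyadic argument, $\sum_{n\le N}n^k a_n\lesssim N^{k+1}a_N$. We will also need the tail sums with $|w|^n$. Writing $|w|\le 1-(1-|w|)$, the factor $|w|^n$ decays geometrically once $n\gg 1/(1-|w|)$. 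However $1-|w|$ may be much smaller than $\delta$, for instance when $w$ is close to $1$ tangentially, so the geometric decay alone does not suffice and cancellation must be used.

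\textbf{Step 2 (summation by parts).} Split $F=\sum_{n<N}a_nw^n+\sum_{n\ge N}a_nw^n$. The first sum is bounded by $\sum_{n<N}a_n\lesssim N a_N\approx \frac{1}{\delta\,\omega_{1/\delta}}$, where we used \eqref{eq:standard-moment-tail} and doubling to get $a_N\approx 1/\omega_{1/\delta}$. For the tail we apply Abel summation $k$ times, using $w^n=(w^n-w^{n+1})/(1-w)$ at each stage. This produces boundary terms of size $\lesssim a_N/\delta$, together with a tail series $\frac{1}{(1-w)^k}\sum_{n\ge N}(\Delta^k a)_n w^{n+k}$. This series converges because $|w|<1$ and $a_n$ grows polynomially.

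\textbf{Step 3 (the main obstacle).} The key difficulty is controlling $\sum_{n\ge N}|\Delta^k a_n|$. For a general $\widehat{\mathcal D}$ weight the finite differences of $1/\omega_{2n+1}$ have no a priori sign beyond the first order. Monotonicity of $a_n$ gives $\sum_{n\ge N}|\Delta a_n||w|^n$, but $\sum_{n\ge N}\Delta a_n$ diverges.

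To handle this, we plan to exploit the log-convexity of moments established in Claim \ref{claim:properties-of-H}, namely that $\log\omega_x$ is convex. This means $a_n=e^{G(n)}$ with $G$ concave and nondecreasing, so that $\Delta a_n\ge0$ and $a_{n+1}/a_n$ is nonincreasing. Second differences are then controlled by $\Delta a_n\lesssim a_n/n$, which follows from concavity combined with doubling. Combining this with the dyadic decomposition $\sum_{2^jN\le n<2^{j+1}N}$ and the factor $|w|^n$, the tail is bounded by $\sum_j a_{2^jN}/(2^jN)\cdot\min(1,\ldots)$. This dyadic sum is the delicate part, and it is where the sharp estimate of \cite{PRWW26} is really needed. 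If the direct bound fails, the fallback is a smooth cutoff $\chi(n/N)$ with a Mellin/Fourier-type argument on $G$.

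\textbf{Step 4 (derivative).} For \eqref{eq:kernel-estimate-derivative} the argument is identical, applied to $F'(w)=\sum n a_n w^{n-1}$. The weight $na_n$ has the same regularity, with $\sum_{n<N}na_n\lesssim N^2a_N$, and this yields the extra factor $1/\delta$.
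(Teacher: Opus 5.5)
There is a genuine gap, and it is in your Step 3. Your Abel-summation scheme needs size bounds on $\nabla^k a_n$ for $k$ \emph{large}, not just for $k\le 2$. Since $a_{2n}\le Ca_n$ allows $a_n\approx n^{\alpha}$ with $\alpha\approx\log_2 C$, take $w\to1$ tangentially, $1-|w|\approx\delta^2$. After $k$ summations, the triangle-inequality bound on the tail $\delta^{-k}\sum_{n\ge N}|\nabla^k a_n|\,|w|^n$ is of order $\delta^{-2\alpha+k-2}$. This is worse than the target $\delta^{-\alpha-1}\approx Na_N$ unless $k\ge\alpha+1$.

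Your claim that the boundary terms are $\lesssim a_N/\delta$ also silently uses $|\nabla^j a_N|\lesssim a_N N^{-j}$ for all $j<k$. Log-convexity of $x\mapsto\omega_x$ cannot supply such bounds. It only gives $0\le\nabla a_n\lesssim a_n/n$, which is a first-difference bound, not the second-difference bound you label it as. A log-convex function can have arbitrarily rough higher derivatives. Deferring the ``delicate dyadic sum'' to \cite{PRWW26} is circular, since that reference is the source of the very estimate you are proving. The Mellin/Fourier fallback is not specified.

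The missing idea is that the moment function is far more regular than log-convex, and this costs nothing beyond moment doubling. With $M(x)=\omega_{x+1}=\int_0^1 r^{x}\,r\omega(r)\,\dd r$ one has $M^{(k)}(x)=\int_0^1(\log r)^k r^{x+1}\omega(r)\,\dd r$ and $|\log r|^k r^{x/2}\le(2k/(ex))^k$. Hence $|M^{(k)}(x)|\le(2k/(ex))^kM(x/2)\lesssim_k M(x)x^{-k}$ by moment doubling. By the quotient rule, $\Phi(x)=1/(2M(2x))$, with $\Phi(n)=a_n$, then satisfies $|\Phi^{(k)}(x)|\le C_k\Phi(x)x^{-k}$ for every $k$. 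Finite differences are controlled through $\nabla^m c_n=\int_{[0,1]^m}F^{(m)}(n-s_1-\cdots-s_m)\,\dd s$.

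With these symbol estimates your sharp-cutoff scheme would in fact close. The boundary terms are $\lesssim|\nabla^j a_N|\delta^{-j-1}\lesssim Na_N$, and the tail is $\lesssim\delta^{-k}a_NN^{1-k}\approx Na_N$ once $k>\log_2C+1$. The same applies to your case near $w=-1$. The paper avoids boundary terms altogether. It uses a smooth dyadic partition $\eta_j^N$ at scale $N=2/|1-\overline z\zeta|$ and sums by parts $N_0=\lceil\log_2(4C)\rceil$ times on each compactly supported block $\Phi\eta_j^N$, which gives $|B_j|\lesssim 2^{-j}N\Phi(N)$. The derivative is handled identically with $x\Phi(x)$ in place of $\Phi$.
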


Proposition \ref{lem:kernel-estimate} is central to establishing our weak-type (1,1) estimate. For the reader's convenience and completeness, we give an alternative proof in Section \ref{S:S4}.

\begin{proof}[Proof of Proposition~\ref{prop:standard-kernel}]
Let \(C_1\ge 1\) be the constant in Lemma~\ref{lem:tail-doubling-properties}, let \(C_2\ge 1\) be the
constant in Lemma~\ref{lem:tail-size-estimates}, and let \(C_3>0\) be the constant in Lemma~\ref{lem:kernel-estimate}. All these
constants depend only on \(\omega\).

Let \(z,\zeta\in\mathbb D\) with \(z\ne\zeta\). By \eqref{eq:kernel-metric-distance}, \eqref{eq:tail-kernel-comparison}, and
\eqref{eq:moment-tail-comparison}, we have
\[
\begin{aligned}
\lambda_\omega(z,|z-\zeta|)
&=2|z-\zeta|\, \widehat{\omega}\bigl((|z|-|z-\zeta|)_+\bigr)\le 2C_2|z-\zeta|\, \widehat{\omega}\bigl((1-|1-\overline z\,\zeta|)_+\bigr)\\
&\le 2C_2|1-\overline z\,\zeta|\, \widehat{\omega}\bigl((1-|1-\overline z\,\zeta|)_+\bigr)
\le4C_1C_2|1-\overline z\,\zeta|\,\omega_{1/|1-\overline z\,\zeta|}.
\end{aligned}
\]
Notice that \(0<|1-\overline z\,\zeta|<2\), so
\eqref{eq:moment-tail-comparison} is applicable. It follows from
\eqref{eq:kernel-estimate} that
\[
|B_z^\omega(\zeta)|
\le \frac{C_3}{|1-\overline z\,\zeta|\, \omega_{1/|1-\overline z\,\zeta|}}
\le\frac{4C_1C_2C_3}{\lambda_\omega(z,|z-\zeta|)}.
\]
Suppose now that $|z-\zeta|\ge 2|z-z'|$, and let \(\xi\) lie on the line segment joining \(z\) and \(z'\).
By \eqref{eq:segment-tail-kernel-comparison} and
\eqref{eq:moment-tail-comparison}, we have
\[
\begin{aligned}
\lambda_\omega(z,|z-\zeta|)
= 2|z-\zeta|\, \widehat{\omega} \bigl((|z|-|z-\zeta|)_+\bigr)
&\le 2C_2^2|z-\zeta|\, \widehat{\omega} \bigl((1-|1-\overline\zeta\,\xi|)_+\bigr) \\
&\le 4C_1C_2^2|z-\zeta|\, \omega_{1/|1-\overline\zeta\,\xi|}.
\end{aligned}
\]
Therefore, by \eqref{eq:kernel-estimate-derivative},
\[
|\partial_\xi B_\zeta^\omega(\xi)|
 \le\frac{C_3}{|1-\overline\zeta\,\xi|^2\, \omega_{1/|1-\overline\zeta\,\xi|}} \le \frac{4C_1C_2^2C_3|z-\zeta|}{|1-\overline\zeta\,\xi|^2\lambda_\omega(z,|z-\zeta|)}.
\]
By \eqref{eq:segment-kernel-distance}, $|1-\overline\zeta\,\xi| \ge |z-\zeta|/2$,
and hence $|z-\zeta|/ |1-\overline\zeta\,\xi|^2 \le 4/|z-\zeta|$. Consequently,
\[
|\partial_\xi B_\zeta^\omega(\xi)|
\le
\frac{16C_1C_2^2C_3}
{|z-\zeta|\lambda_\omega(z,|z-\zeta|)}.
\]
Integrating along the line segment joining \(z\) and \(z'\), we
obtain
\[
|B_\zeta^\omega(z)-B_\zeta^\omega(z')|
\le
16C_1C_2^2C_3
\frac{|z-z'|}
{|z-\zeta|\lambda_\omega(z,|z-\zeta|)}.
\]
Since the Bergman kernel is Hermitian, $B_z^\omega(\zeta) =\overline{B_\zeta^\omega(z)}$.
It follows that
\[
|B_\zeta^\omega(z)-B_\zeta^\omega(z')| + |B_z^\omega(\zeta)-B_{z'}^\omega(\zeta)|
\le 32C_1C_2^2C_3 \frac{|z-z'|}{|z-\zeta|\lambda_\omega(z,|z-\zeta|)}.
\]
Taking $C=\max\left\{4C_1C_2C_3,\,32C_1C_2^2C_3\right\}$, we conclude that \((z,\zeta)\mapsto\overline{B_z^\omega(\zeta)}\) is a standard kernel of order \(1\). The proof is complete now. 
\end{proof}

\begin{proof}[Proof of Proposition \ref{pro:main-weak}]
By Proposition~\ref{prop:upper-doubling},
$(\D,d,\mu_{\omega})$ is upper doubling with dominating function
$\lambda_\omega$, which satisfies the local comparability property. The metric space $\D$ is
separable and geometrically doubling. Since the measure $\mu_{\omega}$ is absolutely continuous with respect to Lebesgue measure, $\mu_{\omega}(\{z\})=0$, $z \in \mathbb{D}$.

By Proposition~\ref{prop:standard-kernel}, the kernel
$(z,\zeta)\mapsto\overline{B_z^{\omega}(\zeta)}$ is a standard kernel of
order $1$. Moreover, the Bergman projection defined by 
\[
 P_\omega f(z)
 =\int_\D f(\zeta)\overline{B_z^{\omega}(\zeta)}\,\dd\mu_{\omega}(\zeta)
\]
is bounded on $L^2_\omega$. Therefore Lemma \ref{thm:HLYY} applies. Thus, $P_{\omega} : L^1_\omega \to L^{1,\infty}_\omega$ is bounded. This proves Proposition \ref{pro:main-weak} and establishes $(4)\Rightarrow (1)$ in Theorem \ref{thm:main-ZS}.
\end{proof}

\section{The maximal weighted Bergman projection}\label{S:corollaryP+}
The goal of this section is to prove Corollary \ref{thm:P+main-ZS} through the following strategy
\[
(1)\Rightarrow (2) \Rightarrow (3) \Rightarrow (4) \Rightarrow (1).
\]
\begin{proof}
Suppose that (1) holds, that is the maximal radial weighted Bergman projection $P_\omega^+$ is weak-type (1,1). Since
\begin{equation}\label{eq:ptop+}
|P_\omega f(z)|\leq P_\omega^+(|f|)(z)\qquad \forall z\in\mathbb{D},
\end{equation}
we have that the weighted Bergman projection $P_\omega$ is weak-type (1,1). By Theorem \ref{thm:main-ZS}, $\omega$ is a $\widehat{\mathcal{D}}$-weight. Recall the integral kernel of $P_\omega^+$ is $|B_z^\omega(\zeta)|$. By Proposition \ref{prop:standard-kernel}, $|B_z^\omega(\zeta)|$ is a standard kernel. By Lemma \ref{thm:HLYY}, the weak-type (1,1) estimate of $P_\omega^+$ implies that it is $L^2_\omega$-bounded. By the Marcinkiewicz interpolation and duality, $P_\omega^+$ is $L^p_\omega$-bounded for all $1<p<+\infty$. 

The implication $(2)\Rightarrow (3)$ is trivial. 

Now assume that the condition (3) holds, i.e., the maximal radial Bergman projection $P_\omega^+$ is $L_\omega^p$-bounded for some $p\in(1,+\infty)\setminus\{2\}$. Then so is the Bergman projection $P_\omega$. By Theorem~\ref{thm:main-ZS}, we have $\omega\in\widehat{\mathcal{D}}$. Using the assumption (3) and the Pel\'{a}ez--R\"{a}tty\"{a} theorem, we obtain $\omega\in\mathcal{D}$ which completes $(3)\Rightarrow (4)$.

Assume that the condition $(4)$ holds, $\omega$ is a $\mathcal{D}$-weight. Since $\mathcal{D}\subset\widehat{\mathcal{D}}$, the Pel\'{a}ez-R\"{a}tty\"{a} theorem implies that $P_\omega^+$ is $L^2_\omega$-bounded. Then its weak-(1,1) estimate follows from Proposition \ref{prop:standard-kernel} and Lemma \ref{thm:HLYY}. 
The proof is complete now.
\end{proof}

\section{From weak-(1,1) to the $\widehat{\mathcal{D}}$ condition via testing functions}\label{S:imply14}
In Section \ref{S:S2}, the proof of $(1)\Rightarrow (4)$ in Theorem~\ref{thm:main-ZS} follows from the interpolation theorem and $L_\omega^p$-boundedness. One might ask for a more direct proof. Here, we provide such a proof by using testing functions. The argument may be of independent interest.

\begin{lemma}\label{lem:Laplace-doubling}
Let $F:[0,\infty)\to[0,\infty)$ be bounded and nondecreasing, with $F(0)=0$ and $F(t)>0$ for every $t>0$.  Suppose that $C\ge1$ and that
\begin{equation}\label{eq:Laplace-hypothesis}
 \sup_{s>0}e^{-ns}F(s)
 \le 2Cn\int_0^\infty e^{-2nt}F(t)\,\dd t, \quad \text{for every integer }n\ge1.
\end{equation}
Then, $F$ has the doubling property
\begin{equation}\label{eq:Laplace-doubling}
 F(t)\le64eC^3F(t/2),
 \quad 0<t\le1.
\end{equation}
\end{lemma}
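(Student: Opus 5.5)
The plan is to play the hypothesis \eqref{eq:Laplace-hypothesis} against itself: the left-hand side
\[
S_n:=\sup_{s>0}e^{-ns}F(s)
\]
also controls the tail of the Laplace-type integral on the right. Splitting that integral at a well-chosen point $a>0$ should show that $S_n$ is controlled by $F(a)$ alone. Then I will pick $n\approx 1/t$ so that $S_n$ dominates $F(t)$ while $a\le t/2$.

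\textbf{Step 1: split the integral.} Fix an integer $n\ge1$ and $a>0$. On $[0,a]$, since $F$ is nondecreasing,
\[
\int_0^a e^{-2ns}F(s)\,\dd s\le \frac{F(a)}{2n}.
\]
On $[a,\infty)$, write $e^{-2ns}F(s)=e^{-ns}\cdot e^{-ns}F(s)\le S_n e^{-ns}$. This gives
\[
\int_a^\infty e^{-2ns}F(s)\,\dd s\le \frac{S_n e^{-na}}{n}.
\]
Here $S_n<\infty$ because $F$ is bounded.

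Inserting both bounds into \eqref{eq:Laplace-hypothesis} yields
\[
S_n\le C F(a)+2Ce^{-na}S_n .
\]

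\textbf{Step 2: absorb the $S_n$ term.} Choose $a=\log(4C)/n$, so that $2Ce^{-na}=1/2$. Absorbing the second term into the left side, which is legitimate since $S_n$ is finite, gives
\[
S_n\le 2C\,F\bigl(\log(4C)/n\bigr).
\]
Bounding $S_n$ from below by its value at $s=t$ gives
\[
F(t)\le 2C\,e^{nt}\,F\bigl(\log(4C)/n\bigr)\qquad\text{for every } t>0 \text{ and every integer } n\ge1.
\]

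\textbf{Step 3: choose $n$.} For $0<t\le1$, take $n=\lceil 2\log(4C)/t\rceil$. This satisfies $n\ge1$ because $C\ge1$. With this choice $\log(4C)/n\le t/2$, so monotonicity gives $F(\log(4C)/n)\le F(t/2)$. Moreover
\[
nt\le 2\log(4C)+t\le 2\log(4C)+1,
\]
so $e^{nt}\le 16eC^2$. Therefore
\[
F(t)\le 32eC^3F(t/2),
\]
which is stronger than \eqref{eq:Laplace-doubling}.

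\textbf{Main obstacle.} The only delicate point is Step 1: recognizing that the tail of $\int e^{-2ns}F$ is bounded by the left-hand side $S_n$ itself, so the inequality can be closed by absorption. The hypotheses $F(0)=0$ and $F(t)>0$ are not needed for this estimate. The integer constraint on $n$ costs only the harmless factor $e^{t}\le e$, and this is exactly why the estimate is restricted to $t\le 1$.
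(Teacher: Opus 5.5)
Your proof is correct and is essentially the paper's argument: both split the Laplace integral at $\log(4C)/n$, control the tail by $S_n e^{-\log(4C)}$, and then take $n=\lceil 2\log(4C)/t\rceil$. The only difference is that you bound the head $\int_0^{a}e^{-2ns}F(s)\,\dd s$ directly by $F(a)/(2n)$ and absorb, whereas the paper normalizes by $S_n$ and uses an averaging argument to pick a point $u_n\in[0,\log(4C)/n]$ with $S_n\le 4Ce^{-nu_n}F(u_n)$; your version avoids dividing by $S_n$ and gains a factor of $2$, giving $32eC^3$ instead of $64eC^3$.
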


\begin{proof}
For each integer $n\ge1$, the assumptions on $F$ imply that $0 < \sup_{s>0}e^{-ns}F(s) < \infty$.  Define
\[
 G_n(t)=\frac{e^{-nt}F(t)}{\sup_{s>0}e^{-ns}F(s)}, \quad t\ge0.
\]
Then $0\le G_n(t)\le1$.  Dividing
\eqref{eq:Laplace-hypothesis} by $\sup_{s>0}e^{-ns}F(s)$ and making the change of variables
$v=nt$ gives
\begin{align*}
 1
 &\le2Cn\int_0^\infty e^{-nt}G_n(t)\,\dd t =2C\int_0^\infty e^{-v}G_n(v/n)\,\dd v.
\end{align*}
Consequently,
\begin{equation}\label{eq:G-average}
 \int_0^\infty e^{-v}G_n(v/n)\,\dd v \ge\frac{1}{2C}.
\end{equation}
Put $L=\log(4C)$. Since $G_n\le1$, we have
\[
 \int_L^\infty e^{-v}G_n(v/n)\,\dd v \le e^{-L}=\frac{1}{4C}.
\]
It follows from \eqref{eq:G-average} and a weighted averaging argument that
\[
 \int_0^L e^{-v}G_n(v/n)\,\dd v \ge\frac{1}{4C}.
\]
Hence there is $v_n\in[0,L]$ such that $ G_n(v_n/n)\ge 1/4C$.
Indeed, if $G_n(v/n)<1/(4C)$ for all $v\in[0,L]$, then
\[
\int_0^L e^{-v}G_n(v/n)\,\dd v \le \frac{1}{4C} \int_0^L e^{-v}\,\dd v < \frac{1}{4C},
\]
contradicting the previous inequality. By definition of $G_n$, writing $u_n=v_n/n$, we obtain
\begin{equation}\label{eq:An-localized}
 \sup_{s>0}e^{-ns}F(s) \le4Ce^{-nu_n}F(u_n), \quad 0\le u_n\le\frac{L}{n}.
\end{equation}

Fix $0<t\le1$ and choose $n=\left\lceil\frac{2L}{t}\right\rceil$.
Then $nt\le2L+1$, $ 0\le u_n\le\frac{L}{n}\le\frac{t}{2}$.
By \eqref{eq:An-localized}, we have 
\begin{align*}
 e^{-nt}F(t) &\le \sup_{s>0}e^{-ns}F(s) \le4Ce^{-nu_n}F(u_n).
\end{align*}
Since $F$ is nondecreasing,
\begin{align*}
 F(t)\le4Ce^{n(t-u_n)}F(u_n) \le4Ce^{nt}F(t/2) \le4Ce^{2L+1}F(t/2) =64eC^3F(t/2).
\end{align*}
This proves \eqref{eq:Laplace-doubling}, and completes the proof.
\end{proof}

\begin{proof}[Proof of $(1)\Rightarrow (4)$ in Theorem~\ref{thm:main-ZS}]
Assume that there is a constant $C>0$ such that
\begin{equation}\label{eq:weak-converse-assumption}
 \| P_{\omega} f\|_{L^{1,\infty}_{\omega}}
 \le C\|f\|_{L^1_{\omega}},
 \quad f\in L^1_{\omega}.
\end{equation}
Since $ P_{\omega}1=1$, applying \eqref{eq:weak-converse-assumption} to the
constant function gives $C\ge1$. Indeed, \(\|1\|_{L^{1,\infty}_\omega}=\mu_\omega(\mathbb D)=\|1\|_{L^1_\omega}\).

Fix an integer $n\ge1$ and $0<\rho<1$, and consider the test function 
\[
 f_{\rho,n}(re^{i\theta}) =\mathbbm{1}_{[\rho,1)}(r)e^{in\theta}.
\]
Then
\begin{equation}\label{eq:test-L1-norm}
 \|f_{\rho,n}\|_{L^1_{\omega}} =2\int_\rho^1r\omega(r)\,\dd r.
\end{equation}
The series representation of $B_z^{\omega}$ and angular orthogonality give
\begin{align*}
 P_\omega f_{\rho,n}(z)
 &=\int_\rho^1\int_0^{2\pi} e^{in\theta} \sum_{k=0}^\infty \frac{z^kr^ke^{-ik\theta}}{2\omega_{2k+1}} \frac{\dd\theta}{\pi}\,r\omega(r)\,\dd r\notag\\
 &=\frac{\displaystyle\int_\rho^1r^{n+1}\omega(r)\,\dd r} {\omega_{2n+1}}z^n.
\end{align*}
Moreover, 
\begin{align}
 \|z^n\|_{L^{1,\infty}_{\omega}}
 &=\sup_{0<\alpha<1}\alpha\,
 \mu_{\omega}\bigl(\{z\in\D:|z|^n>\alpha\}\bigr)\notag\\
 &=\sup_{0<\alpha<1}2\alpha
 \int_{\alpha^{1/n}}^1r\omega(r)\,\dd r = \sup_{0<s<1}2 s^n \int_{s}^1 r\omega(r) \,\dd r .
 \label{eq:monomial-weak-norm}
\end{align}

Applying \eqref{eq:weak-converse-assumption} to $f_{\rho,n}$ and using
\eqref{eq:test-L1-norm}--\eqref{eq:monomial-weak-norm}, we obtain
\begin{equation}\label{eq:test-weak-estimate}
 \frac{\displaystyle\int_\rho^1r^{n+1}\omega(r)\,\dd r}{\omega_{2n+1}}\sup_{0<s<1}2 s^n \int_{s}^1 r\omega(r) \,\dd r
 \le 2C \int_\rho^1r\omega(r)\,\dd r.
\end{equation}
The numerator satisfies
\[
 \int_\rho^1r^{n+1}\omega(r)\,\dd r \ge\rho^n\int_\rho^1r\omega(r)\,\dd r > 0.
\]
Dividing by $\int_{\rho}^{1} r \omega(r) \, \dd r$  and using \eqref{eq:test-weak-estimate} yields $\rho^n \sup_{0<s<1}2 s^n \int_{s}^1 r\omega(r) \,\dd r \le2C\omega_{2n+1}$, and
letting $\rho$ tend to $1$ gives
\begin{equation}\label{eq:tail-moment-necessary}
 \sup_{0<s<1}2 s^n \int_{s}^1 r\omega(r) \,\dd r \le2C\omega_{2n+1}, \quad n\ge1.
\end{equation}

Define the function 
\[
F(t) = 2\int_{e^{-t}}^1 r \omega(r) \, \dd r, \quad t \ge 0.
\]
The function $F$ is bounded and nondecreasing, $F(0)=0$, and $F(t)>0$
for $t>0$.  In addition,
\begin{equation}\label{eq:An-Laplace-form}
 \sup_{t>0}e^{-nt}F(t) = \sup_{t > 0}2 e^{-nt} \int_{e^{-t}}^1 r \omega(r) \, \dd r = \sup_{0<s<1}2 s^n \int_{s}^1 r\omega(r) \,\dd r.
\end{equation}
Tonelli's theorem gives the exact identity
\begin{align}
 2n\int_0^\infty e^{-2nt}F(t)\,\dd t
 =4n\int_0^\infty e^{-2nt} \int_{e^{-t}}^1r\omega(r)\,\dd r\,\dd t 
 &=4n\int_0^1r\omega(r)\int_{-\log r}^\infty e^{-2nt}\,\dd t\,\dd r\notag\\
 &=2\int_0^1r^{2n+1}\omega(r)\,\dd r=2\omega_{2n+1}.
 \label{eq:moment-Laplace-identity}
\end{align}
Combining \eqref{eq:tail-moment-necessary},
\eqref{eq:An-Laplace-form}, and \eqref{eq:moment-Laplace-identity}, we find
\[
 \sup_{t>0}e^{-nt}F(t)
 \le2Cn
 \int_0^\infty e^{-2nt}F(t)\,\dd t,
 \quad n\ge1.
\]
By Lemma~\ref{lem:Laplace-doubling} there is a constant $C_1 = 64eC^3$ such that 
\begin{equation}\label{eq:F-doubling-converse}
 F(t)\le C_1 F(t/2),
 \quad 0<t\le1.
\end{equation}
Let \(e^{-1}\le r<1\), and put \(t=-\log r\). Applying
\eqref{eq:F-doubling-converse} first with \(t\) and then with \(t/2\),
we obtain
\begin{align*}
 2 \int_{r}^1 s \omega(s) \, \dd s \le 2C_1 \int_{\sqrt{r}}^1 s \omega(s) \, \dd s  \le 2 C_1^2 \int_{r^{1/4}}^1 s \omega(s)\, \dd s.
\end{align*}
Moreover, for \(e^{-1}\le r<1\),
\[
 r^{1/4}\ge \frac{1+r}{2}.
\]
Indeed, writing \(x=r^{1/4}\), this is equivalent to
\[
 2x-1-x^4=(1-x)(x^3+x^2+x-1)\ge0.
\]
The first factor is nonnegative, while
$x\mapsto x^3+x^2+x-1$ is strictly increasing on $(0,\infty)$ and is
positive at $x=e^{-1/4}$. Hence the inequality holds for
$e^{-1/4}\le x\le1$. It follows that
\[
\int_{r}^1 s \omega(s) \, \dd s \le C_1^2 \int_{\frac{1 + r}{2}}^1 s \omega(s) \, \dd s.
\]
Since $e^{-1}\le r<1$, we have $r^{-1} \le e$. Therefore, the definition of $\widehat{\omega}$ gives
\begin{align}
  \widehat{\omega} (r) \le e\int_r^1 s \omega(s)\,\dd s \le  eC_1^2 \int_{\frac{1 + r}{2}}^1 s \omega(s) \, \dd s \le eC_1^2 \widehat{\omega}\left(\frac{1 + r}{2}\right).
 \label{eq:tail-doubling-near-boundary}
\end{align}

It remains to consider \(0\le r<e^{-1}\). In this range, $ (1+r)/2 \le (1+e^{-1})/2$.
Since \( \widehat{\omega} \) is nonincreasing,
\[
  \widehat{\omega} \left(\frac{1+r}{2}\right)
 \ge
  \widehat{\omega} \left(\frac{1+e^{-1}}{2}\right),
\]
and hence
\begin{equation}\label{eq:tail-doubling-interior}
  \widehat{\omega} (r) \le  \widehat{\omega} (0) \le \frac{ \widehat{\omega} (0)} { \widehat{\omega} ((1+e^{-1})/2)}  \widehat{\omega} \left(\frac{1+r}{2}\right).
\end{equation}
The denominator is positive by the standing assumption $\widehat{\omega}(r) > 0$ for $0 \le r < 1$. Combining \eqref{eq:tail-doubling-near-boundary} and \eqref{eq:tail-doubling-interior} and taking $C_{\omega} = \max\left\{eC_1^2,\, \frac{ \widehat{\omega} (0)}{ \widehat{\omega} ((1+e^{-1})/2)}\right\}$, we obtain
\[
  \widehat{\omega} (r)
 \le
  C_{\omega}  \widehat{\omega} \left(\frac{1+r}{2}\right),
 \quad 0\le r<1.
\]
Thus \(\omega\in\widehat{\mathcal D}\). This completes the proof.
\end{proof}

\section{Pointwise estimates for $\widehat{\mathcal D}$-weighted Bergman kernels} \label{S:S4}
Let $\omega\in\widehat{\mathcal D}$. By the moment characterization
of $\widehat{\mathcal D}$ \cite[Lemma 2.1]{Pel16}, there exists a constant
$C=C(\omega)\geq1$ such that
\begin{equation}\label{eq:moment-doubling}
    \omega_t\leq C\omega_{2t},\quad t \ge 0.
\end{equation}
 
For the measure $\dd\nu(r)=r\omega(r)\,\dd r$, the moment
function defined in \eqref{eq:moment-of-rw} is:
\[
 M(x)=\int_0^1r^x\,\dd\nu(r)=\omega_{x+1}, \quad x\ge0.
\]
Consider the smooth coefficient function
\begin{equation}\label{eq:scf}
 \Phi(x)=\frac{1}{2M(2x)}, \quad x\ge0.
\end{equation}
Then the reproducing kernel has the representation
\begin{equation}\label{eq:B-series}
 B_z^{\omega}(\zeta)
 =\sum_{n=0}^{\infty}\frac{(\overline z\,\zeta)^n}{2\omega_{2n+1}}
 =\sum_{n=0}^{\infty}\Phi(n)(\overline z\,\zeta)^n,
 \quad z,\zeta\in\D.
\end{equation}

The purpose of this section is to give a self-contained proof of Proposition~\ref{lem:kernel-estimate}. For convenience, we restate the estimate below.

\begin{proposition}\label{thm:apdx-kernel-estimate}
  Let $\omega$ be a radial weight in $\widehat{\mathcal{D}}$. Then there is a  constant $C = C(\omega) > 0$, depending only on $\omega$, such that, for every $z,\zeta\in\D$,
  \begin{equation}\label{eq:apdx-kernel-estimate}
    |B_z^{\omega}(\zeta)|
    \le \frac{C}{|1-\overline z\,\zeta|\,\omega_{1/|1-\overline z\,\zeta|}},
  \end{equation}
  and
  \begin{equation}\label{eq:apdx-kernel-estimate-derivative}
    |\partial_\zeta B_z^{\omega}(\zeta)|
    \le \frac{C}{|1-\overline z\,\zeta|^2\,\omega_{1/|1-\overline z\,\zeta|}}.
  \end{equation}
\end{proposition}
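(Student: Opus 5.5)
Write $w=\overline z\,\zeta\in\D$ and $\delta=|1-w|\in(0,2)$. Both \eqref{eq:apdx-kernel-estimate} and \eqref{eq:apdx-kernel-estimate-derivative} are statements about the one-variable power series $g(w)=\sum_{n\ge0}\Phi(n)w^n$ and $\partial_\zeta B_z^\omega(\zeta)=\overline z\,g'(w)$, with $|\overline z|\le 1$. So it suffices to prove
\[
|g(w)|\lesssim \frac{1}{\delta\,\omega_{1/\delta}},\qquad |g'(w)|\lesssim \frac{1}{\delta^2\,\omega_{1/\delta}} .
\]
When $\delta\ge 1/2$ both bounds are trivial. In that range one only needs $\omega_{1/\delta}\ge \omega_2>0$, together with a crude bound for $g$ and $g'$ when $|w|\le 1/2$. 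The real case is $\delta<1/2$, so $|w|>1/2$.

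The plan is a dyadic decomposition combined with summation by parts, using only two facts about $\Phi$. The first is moment doubling \eqref{eq:moment-doubling}. It gives $\Phi(2x)\le C\Phi(x)$, so $\Phi$ grows at most polynomially; combined with \eqref{eq:standard-moment-tail}, it also gives $\Phi(x)\asymp 1/\omega_{x}$ for $x\ge1$. The second concerns derivatives. By Claim~\ref{claim:properties-of-H}, $\log\Phi(x)=-\log 2+H(2x)$ with $H=-\log M$ concave and nondecreasing, so $(\log\Phi)'\ge0$ is nonincreasing. Lemma~\ref{lem:reverse-Jensen}'s argument (or directly the doubling) gives $x\,H'(x)\lesssim 1$. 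Writing $\Phi'=\Phi\cdot(\log\Phi)'$ then yields $0\le \Delta\Phi(n):=\Phi(n+1)-\Phi(n)\lesssim \Phi(n)/n$. For the second difference, concavity of $H$ makes $\Phi$ ``log-concave'', which alone does not bound $\Delta^2\Phi$. Instead, use Abel summation only once for the size bound, and twice for the derivative, controlling the error terms through monotonicity.

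Concretely, for the size estimate, let $N=\lceil 1/\delta\rceil$ and split $g(w)=\sum_{n<N}+\sum_{n\ge N}$. The head is bounded by $\sum_{n<N}\Phi(n)\lesssim N\Phi(N)\asymp 1/(\delta\,\omega_{1/\delta})$, using polynomial growth and a dyadic sum $\sum_k 2^k\Phi(2^k)$ dominated by its last term. That domination needs a reverse-growth property: $\Phi(2x)\ge c\,\Phi(x)$ with $c>1$ eventually, i.e.\ $\omega_{2x}\le \omega_x/c$. This follows from $\widehat\omega$ being tail-like in \eqref{eq:standard-moment-tail} ($\widehat\omega(1-1/(2x))\le \frac12$-ish fraction of $\widehat\omega(1-1/x)$ is not automatic). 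If it fails, I would fall back on the cruder bound $\sum_{n<N}\Phi(n)\le N\Phi(N)$ from monotonicity, which is all that is needed. For the tail, use Abel summation with partial sums $S_n(w)=\sum_{k<n}w^k=(1-w^n)/(1-w)$, $|S_n|\le 2/\delta$:
\[
\sum_{n\ge N}\Phi(n)w^n=\sum_{n\ge N}\Delta\Phi(n)\,\bigl(\text{tail}\bigr)\ldots
\]
It is cleaner to multiply by $(1-w)$: $(1-w)\sum_{n\ge N}\Phi(n)w^n=\Phi(N)w^N+\sum_{n\ge N}\Delta\Phi(n)w^{n+1}$. The term $\sum_{n\ge N}\Delta\Phi(n)|w|^{n+1}$ is the problem, since $\Phi$ may be unbounded as $|w|\to1$. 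Here I would exploit $\Delta\Phi(n)\lesssim\Phi(n)/n$ and $|w|\le 1-(1-|w|)$, with $1-|w|\le\delta$. This is the main obstacle: $|w|$ can be close to $1$ while $\delta$ is not small only when $w$ is near the unit circle away from $1$, and there absolute convergence is lost. The fix I would try first is to iterate Abel summation. Multiplying by $(1-w)^2$ produces $\sum \Delta^2\Phi(n)w^n$. Since $\Delta\Phi\ge0$ with $\Delta\Phi(n)\lesssim\Phi(n)/n$, I would regularize: replace $\Phi$ by a comparable smooth majorant built from $H$ (concave, so $H'$ is monotone), for which $\Delta^2$ has controlled total variation on dyadic blocks, giving $\sum_{n\ge N}|\Delta^2\Phi(n)|\lesssim \Phi(N)/N$. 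Note, however, that the kernel $g$ itself cannot be replaced. So the honest route is to bound $\sum_{n\ge N}|\Delta^2\Phi(n)|$ using that $\Phi'=\Phi\cdot 2H'(2x)$, with $\Phi$ increasing and $H'$ decreasing. Then $\Phi'$ is a product of monotone factors, so its total variation on $[N,\infty)\cap[N,R]$ is bounded by twice the sup of $\Phi'$ times a factor, which I expect to reduce, after truncating at $R\to\infty$ with $|w|^R\to0$, to $\lesssim \Phi(N)/N$ plus boundary terms. The same scheme with one more power handles $g'(w)=\sum n\Phi(n)w^{n-1}$, whose coefficients $n\Phi(n)$ satisfy the analogous bounds with an extra factor $N$. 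This yields $1/(\delta^2\omega_{1/\delta})$ after converting $\Phi(N)$ to $1/\omega_{1/\delta}$ via \eqref{eq:standard-moment-tail} and moment doubling.
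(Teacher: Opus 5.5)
Your outline has the right shape: reduce to $g(w)=\sum_n\Phi(n)w^n$, cut at $N\approx 1/\delta$, bound the head by $N\Phi(N)$ using monotonicity, and gain factors of $(1-w)^{-1}$ on the tail by summation by parts. It fails, however, at exactly the step you call the main obstacle, and the repair you propose does not work.

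The bound must hold uniformly as $|w|\to 1$ with $\delta=|1-w|$ fixed. So after summing by parts, the remaining series must be absolutely summable without any help from $|w|^n$. After $m$ Abel summations the tail is controlled by $\delta^{-m}\sum_{n\ge N}|\nabla^m\Phi(n)|$ plus boundary terms of size about $\delta^{-k-1}|\nabla^k\Phi(N)|$, $k<m$. Since $\Phi(n)$ can grow like $n^{\alpha}$ for any $\alpha>0$ ($\Phi(n)\asymp n^{\beta+1}$ for $W_\beta$), this needs $m>\alpha+1$, not $m=2$.

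For instance, for $\omega=W_1$ one has $\Phi(n)=(n+1)(n+2)/2$, so $\nabla^2\Phi(n)=1$ for $n\ge 2$. Then $\sum_{n\ge N}|\nabla^2\Phi(n)|\,|w|^n=|w|^N/(1-|w|)$, which is unbounded as $w\to -1$ even though $\delta\to 2$. The bound $\sum_{n\ge N}|\nabla^2\Phi(n)|\lesssim \Phi(N)/N$ that you hope for is false; the left side diverges.

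Your total-variation argument only yields $\mathrm{TV}_{[N,R]}(\Phi')\le 2\Phi(R)\cdot 2H'(2N)\lesssim \Phi(R)/N$, which grows with $R$. Truncating at $R$ via $|w|^R\to 0$ then makes the estimate depend on $1-|w|$ instead of $\delta$.

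The same issue shows that the case $\delta\ge 1/2$ is not trivial. The condition $\delta\ge 1/2$ does not force $|w|\le 1/2$ (take $w$ near $-1$), and there $\sum_n\Phi(n)|w|^n\to\infty$. So boundedness of $g$ and $g'$ in that range requires the same cancellation argument.

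The missing ingredient is control of derivatives of $\Phi$ of every order: $|\Phi^{(k)}(x)|\le C_k\Phi(x)/x^k$ for $x\ge 1$. As you observe, log-concavity cannot give this for $k\ge 2$, but the moment structure does. Write $M^{(k)}(x)=\int_0^1(\log r)^k r^x\,\dd\nu(r)$ and use $|\log r|^k r^{x/2}\le (2k/(ex))^k$. Together with moment doubling this gives $|M^{(k)}(x)|\le (2k/(ex))^k M(x/2)\le C(2k/e)^k M(x)/x^k$ (Lemma~\ref{lem:M-derivatives}). Differentiating $\Phi(x)M(2x)=1/2$ $k$ times and inducting then gives the bound for $\Phi^{(k)}$ (Lemma~\ref{lem:Phi}).

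The paper then splits the series with the smooth dyadic partition $\eta_j^N$ at scale $N=2/\delta$, which treats all $w\in\D$ at once with no case distinction. The blocks $j=0,1$ are bounded crudely by a multiple of $N\Phi(N)$. On each block $j\ge 2$ it sums by parts $N_0=\lceil\log_2(4C)\rceil$ times (Lemma~\ref{lem:finite-differences}). The gain $(|1-w|\,2^jN)^{-N_0}=2^{-(j+1)N_0}$ then beats the growth $2^jN\,\Phi(2^jN)\le 2^jC^jN\Phi(N)$, so block $j$ is $O(2^{-j}N\Phi(N))$, and the smooth cutoffs produce no boundary terms. The derivative is handled the same way with coefficients $n\Phi(n)$ and $N_1=\lceil\log_2(8C)\rceil$.

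Once these higher-derivative bounds are available, your sharp-cutoff iterated Abel summation with sufficiently many steps would also go through.
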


The proof is based on the series representation
\eqref{eq:B-series}, together with a dyadic decomposition and discrete
summation by parts. Since $M$ is decreasing on $[0,\infty)$, the
coefficient function
$
 \Phi(x)
$
is increasing. To carry out the summation-by-parts argument, we also
need scale-invariant estimates for the derivatives of $\Phi$. These
will be deduced from corresponding estimates for the moment function
$M$, which we establish first.
\begin{lemma}\label{lem:M-derivatives}
For every integer $k\ge1$ and every $x \ge 2$, there is a constant $C = C(\omega) > 0$ such that
\begin{equation*}
 |M^{(k)}(x)|
 \le  C \left(\frac{2k}{e}\right)^k
 \frac{M(x)}{x^k}.
\end{equation*}
\end{lemma}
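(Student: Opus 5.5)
We plan to differentiate under the integral sign and then split the resulting integral at a radius adapted to $x$. By the dominated convergence argument already used in Claim~\ref{claim:properties-of-H}, $M$ is $C^\infty$ on $(0,\infty)$ with
\[
 M^{(k)}(x)=\int_0^1 r^x(\log r)^k\,\dd\nu(r),
\qquad\text{so}\qquad
 |M^{(k)}(x)|=\int_0^1 r^x\bigl(\log\tfrac1r\bigr)^k\,\dd\nu(r).
\]
The basic pointwise tool is the elementary maximization
\[
\sup_{0<r<1} r^{y}\bigl(\log\tfrac1r\bigr)^k=\Bigl(\frac{k}{ey}\Bigr)^k,
\]
which follows because $t\mapsto e^{-yt}t^k$ is maximized at $t=k/y$. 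We apply this with $y=x/2$, writing $r^x=r^{x/2}\cdot r^{x/2}$. This gives
\[
 |M^{(k)}(x)|\le \Bigl(\frac{2k}{ex}\Bigr)^k\int_0^1 r^{x/2}\,\dd\nu(r)=\Bigl(\frac{2k}{e}\Bigr)^k\frac{M(x/2)}{x^k}.
\]
This is exactly the claimed constant $(2k/e)^k$, which strongly suggests that this is the intended route.

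It then remains to compare $M(x/2)$ with $M(x)$. Since $M(x)=\omega_{x+1}$, the moment doubling \eqref{eq:moment-doubling} gives
\[
 M(x/2)=\omega_{x/2+1}\le C\,\omega_{x+2}\le C\,\omega_{x+1}=M(x),
\]
because $x+2\ge x+1$ and moments are nonincreasing in the exponent, and since $2(x/2+1)=x+2$. So $M(x/2)\le C M(x)$ with $C=C(\omega)$ independent of $k$ and $x$. The restriction $x\ge2$ is harmless here; it only ensures we stay in the regime where \eqref{eq:moment-doubling} and the standing properties are used without degenerate exponents. Combining the two displays yields the lemma with the constant $C(\omega)$ from \eqref{eq:moment-doubling}.

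The only step needing care is the justification of the derivative formula for all $k$ and the application of the pointwise supremum. The former works via the same domination $r^x|\log r|^k\le r^a|\log r|^k$ on compact $[a,b]\subset(0,\infty)$, and the latter is a one-variable calculus check. The real input is the doubling of moments for $\widehat{\mathcal D}$-weights, which we take from \cite[Lemma 2.1]{Pel16} as recorded in \eqref{eq:moment-doubling}. No step appears to be a serious obstacle.
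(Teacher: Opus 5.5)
Your proposal is correct and matches the paper's proof essentially step for step. You bound $|\log r|^k r^x\le (2k/(ex))^k r^{x/2}$ to get $|M^{(k)}(x)|\le (2k/(ex))^k M(x/2)$, then use moment doubling and monotonicity in the form $M(x/2)=\omega_{x/2+1}\le C\,\omega_{x+2}\le C\,M(x)$, with $C$ independent of $k$ and $x$.
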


\begin{proof}
Differentiation under the integral sign gives
\[
 M^{(k)}(x)=\int_0^1(\log r)^kr^x\,\dd\nu(r)
 =\int_0^1(\log r)^kr^{x+1}\omega(r)\,\dd r.
\]
To justify the differentiation, fix a compact interval
$[a,b]\subset(0,\infty)$. For $x\in[a,b]$ and $0<r<1$,
\[
 |\log r|^kr^x
 \le |\log r|^kr^a
 \le \left(\frac{2k}{ea}\right)^kr^{a/2}.
\]
The last function is bounded on $[0,1)$ and hence integrable with respect
to the finite measure $\nu$. The Dominated Convergence Theorem therefore applies on
every compact subinterval of $(0,\infty)$.

Writing $u=-\log r$, we have $$ \sup_{u\ge0}u^ke^{-xu/2}=\left(\frac{2k}{ex}\right)^k .$$
Consequently,
\[
 |\log r|^kr^x
 \le \left(\frac{2k}{ex}\right)^kr^{x/2},
\]
and hence
\[
 |M^{(k)}(x)|
 \le \left(\frac{2k}{ex}\right)^kM(x/2).
\]
Since $x\ge2$, the moment-doubling estimate \eqref{eq:moment-doubling}
and the monotonicity of $M$ give
\[
 M(x/2)=\omega_{x/2+1}
 \le  C \omega_{x+2}
 = C M(x+1)
 \le  C M(x).
\]
This completes the proof.
\end{proof}

\begin{lemma}\label{lem:Phi}
The function $\Phi$ defined by \eqref{eq:scf} satisfies the doubling property: there is a constant $C = C(\omega) > 0$ such that 
\begin{equation}\label{eq:Phi-doubling}
 \Phi(2x)\le  C \Phi(x), \quad x \ge 0.
\end{equation}
Moreover, for each integer $k\ge0$ there is a constant $C_k = C(\omega,k)>0$,
depending only on $ \omega $ and $k$, such that
\begin{equation}\label{eq:Phi-derivatives}
 |\Phi^{(k)}(x)| \le C_k \frac{\Phi(x)}{x^k}, \quad x\ge1.
\end{equation}
\end{lemma}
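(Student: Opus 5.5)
The doubling property \eqref{eq:Phi-doubling} follows directly from the moment-doubling estimate \eqref{eq:moment-doubling}. Since $\Phi(x)=1/(2M(2x))$, we have
\[
\frac{\Phi(2x)}{\Phi(x)}=\frac{M(2x)}{M(4x)}=\frac{\omega_{2x+1}}{\omega_{4x+1}} .
\]
By monotonicity of the moments, $\omega_{4x+1}\ge \omega_{4x+2}$. Applying \eqref{eq:moment-doubling} with $t=2x+1$ gives $\omega_{2x+1}\le C\omega_{4x+2}$. Hence $\Phi(2x)\le C\Phi(x)$ for every $x\ge0$.

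For the derivative bounds \eqref{eq:Phi-derivatives} we view $\Phi$ as the composition $\Phi(x)=g(M(2x))$ with $g(y)=1/(2y)$, and apply the Fa\`a di Bruno formula. The case $k=0$ is trivial. For $k\ge1$, $\Phi^{(k)}(x)$ is a finite sum, with coefficients depending only on $k$, of terms of the form
\[
g^{(j)}(M(2x))\prod_{i=1}^{j} 2^{m_i}M^{(m_i)}(2x),\qquad m_1+\dots+m_j=k,\ m_i\ge1 .
\]
Here $g^{(j)}(y)=(-1)^j j!/(2y^{j+1})$. For $x\ge1$ we have $2x\ge2$, so Lemma~\ref{lem:M-derivatives} applies at the point $2x$ and gives
\[
|M^{(m_i)}(2x)|\le C\,c_{m_i}\,\frac{M(2x)}{(2x)^{m_i}} .
\]
Multiplying over $i$, each term is bounded by a constant depending on $\omega$ and $k$ times
\[
\frac{1}{M(2x)^{j+1}}\cdot \frac{M(2x)^j}{x^{k}}=\frac{2\Phi(x)}{x^k}.
\]
Summing the finitely many terms yields \eqref{eq:Phi-derivatives}.

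An alternative is to avoid Fa\`a di Bruno by induction on $k$. Differentiating the identity $2\Phi(x)M(2x)=1$ $k$ times by Leibniz gives
\[
\Phi^{(k)}(x)M(2x)=-\sum_{j<k}\binom{k}{j}2^{k-j}\Phi^{(j)}(x)M^{(k-j)}(2x).
\]
Dividing by $M(2x)$ and inserting the inductive hypothesis together with Lemma~\ref{lem:M-derivatives} gives the bound directly. I would use this inductive route, since it is cleaner to write.

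The only point needing care is bookkeeping: Lemma~\ref{lem:M-derivatives} must be applied only at arguments $\ge 2$, which is exactly why the statement is restricted to $x\ge1$. The constants also depend on $k$ through $(2k/e)^k$ and the binomial coefficients, which the statement allows. I do not expect a genuine obstacle here.
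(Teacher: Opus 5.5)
Your proposal is correct and follows the paper's proof: the doubling bound comes from \eqref{eq:moment-doubling} at $t=2x+1$ together with monotonicity of the moments. The derivative bound comes from differentiating $\Phi(x)M(2x)=1/2$ $k$ times by Leibniz and inducting on $k$, with Lemma~\ref{lem:M-derivatives} applied at $2x\ge 2$; your Fa\`a di Bruno variant is also valid but not needed.
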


\begin{proof}
By \eqref{eq:moment-doubling} and the monotonicity of $M$,
\[
 M(2x)=\omega_{2x+1}
 \le  C \omega_{4x+2}
 = C M(4x+1)
 \le  C M(4x).
\]
Taking reciprocals and using $\Phi(x)=1/(2M(2x))$ gives
\[
 \Phi(2x)=\frac{1}{2M(4x)}
 \le \frac{ C }{2M(2x)}
 = C \Phi(x),
\]
which proves \eqref{eq:Phi-doubling}.

For the derivative estimate, set $g(x)=M(2x)$. Lemma
\ref{lem:M-derivatives} gives, for $j\ge1$ and $x\ge1$,
\begin{align*}
 |g^{(j)}(x)|
 &=2^j|M^{(j)}(2x)| \\
 &\le 2^j C \left(\frac{2j}{e}\right)^j
 \frac{M(2x)}{(2x)^j}
 = C \left(\frac{2j}{e}\right)^j
 \frac{g(x)}{x^j}.
\end{align*}
Because $\Phi(x)g(x)=1/2$, differentiating this identity $k$ times yields
\begin{equation}\label{eq:Phi-induction-identity}
 \Phi^{(k)}(x)g(x)
 =-\sum_{j=1}^k\binom{k}{j}
 \Phi^{(k-j)}(x)g^{(j)}(x).
\end{equation}
The case $k=0$ holds with $C_0=1$. Suppose that the estimate has
already been proved up to order $k-1$. Substituting those estimates and
the preceding bound for $g^{(j)}$ into
\eqref{eq:Phi-induction-identity} gives
\[
 |\Phi^{(k)}(x)|
 \le\left[ \sum_{j=1}^k\binom{k}{j} C_{k-j} C \left(\frac{2j}{e}\right)^j \right]
 \frac{\Phi(x)}{x^k}.
\]
Thus the induction closes after taking the expression in brackets as
$C_k$. In particular, each $C_k$ is produced by the argument and depends
only on $\omega$ and $k$. This completes the proof.
\end{proof}

We use a single smooth cutoff throughout the argument. Define
\[
 \rho(t)
 =\begin{cases}
  e^{-1/t},&t>0,\\
  0,&t\le0,
 \end{cases}
 \quad \eta(x)=\frac{\rho(2-x)}{\rho(2-x)+\rho(x-1)}.
\]
Then $\eta\in C^\infty(\mathbb R)$, $0\le\eta\le1$, $\eta$ is
nonincreasing, $\eta(x)=1$ for $x\le1$, and $\eta(x)=0$ for $x\ge2$.
Every derivative of $\eta$ is bounded.

For a real number $N\in [1,\infty)$, define
\begin{equation*}
 \eta_0^{N}(x)=\eta(x/N),
 \quad 
 \eta_j^{N}(x) =\eta\left(\frac{x}{2^jN}\right) -\eta\left(\frac{x}{2^{j-1}N}\right),
 \quad j\ge1.
\end{equation*}

\begin{lemma}\label{lem:dyadic-partition}
For every $N\ge1$ and $x\ge0$,
\begin{equation*}
 \sum_{j=0}^\infty\eta_j^{N}(x)=1.
\end{equation*}
Moreover, $\operatorname{supp} \eta_0^{N}\cap[0,\infty)\subset[0,2N]$, and, for $j\ge1$, $\eta_j^N$ has compact support, that is,
\begin{equation*}
 \operatorname{supp} \eta_j^{N}\subset[2^{j-1}N,2^{j+1}N].
\end{equation*}
For every integer $q\ge0$ and $j\ge1$,
\begin{equation}\label{eq:dyadic-derivatives}
 |(\eta_j^{N})^{(q)}(x)|
 \le(1+2^q)\|\eta^{(q)}\|_{L^\infty(\mathbb R)}(2^jN)^{-q},
 \quad x\in\mathbb R.
\end{equation}
\end{lemma}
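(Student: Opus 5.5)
The plan is to verify the partition-of-unity identity by a telescoping sum, read off the support claims from the two plateaus of $\eta$, and get the derivative bound from the chain rule.

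\emph{Partition of unity.} For $J\ge1$ the sum telescopes:
\[
\sum_{j=0}^J\eta_j^N(x)=\eta\bigl(x/(2^JN)\bigr).
\]
For fixed $x\ge0$ there is $J$ with $2^JN\ge x$. For all such $J$ we have $x/(2^JN)\le1$, so $\eta(x/(2^JN))=1$. The partial sums are therefore eventually equal to $1$, and the series converges to $1$. Only finitely many terms are nonzero, so no convergence issue arises.

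\emph{Supports.} Since $\eta(y)=0$ for $y\ge2$, we get $\eta_0^N(x)=0$ whenever $x\ge2N$. Hence $\operatorname{supp}\eta_0^N\cap[0,\infty)\subset[0,2N]$.

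For $j\ge1$, take first $x\le2^{j-1}N$. Then both arguments satisfy $x/(2^jN)\le x/(2^{j-1}N)\le1$, so both terms equal $1$ and $\eta_j^N(x)=0$. Next take $x\ge2^{j+1}N$. Then $x/(2^jN)\ge2$ and $x/(2^{j-1}N)\ge4$, so both terms vanish. The function $\eta_j^N$ is therefore zero outside the open interval $(2^{j-1}N,2^{j+1}N)$. Its support, being the closure of the nonvanishing set, lies in $[2^{j-1}N,2^{j+1}N]$, which is compact.

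\emph{Derivative bound.} By the chain rule,
\[
(\eta_j^N)^{(q)}(x)=(2^jN)^{-q}\eta^{(q)}\bigl(x/(2^jN)\bigr)-(2^{j-1}N)^{-q}\eta^{(q)}\bigl(x/(2^{j-1}N)\bigr).
\]
Bounding each term by $\|\eta^{(q)}\|_{L^\infty}$ and using $(2^{j-1}N)^{-q}=2^q(2^jN)^{-q}$ gives the factor $(1+2^q)$ in \eqref{eq:dyadic-derivatives}.

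Finiteness of $\|\eta^{(q)}\|_{L^\infty}$ is part of the stated properties of $\eta$. It holds because $\eta^{(q)}$ is continuous and vanishes outside $[1,2]$ when $q\ge1$. For this one needs the denominator $\rho(2-x)+\rho(x-1)$ to be positive, which is true since at least one of $2-x$ and $x-1$ is positive for every $x$.

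None of these steps is a real obstacle. The only point needing care is checking that the endpoint cases in the support computation land on the correct plateau of $\eta$.
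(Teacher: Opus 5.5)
Your proposal is correct and follows the paper's proof essentially step for step. Both arguments telescope the partial sums to $\eta(x/(2^JN))$, read the supports off the two plateaus of $\eta$, and apply the chain rule together with $(2^{j-1}N)^{-q}=2^q(2^jN)^{-q}$. Your support check is more explicit than the paper's ``by the definition of $\eta$.''

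One cosmetic slip: the chain $x/(2^jN)\le x/(2^{j-1}N)\le 1$ holds only for $0\le x\le 2^{j-1}N$. For $x<0$ the first inequality reverses, but both arguments are then negative, so both terms still equal $1$ and your conclusion stands.
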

\begin{proof}
We first prove the partition-of-unity identity.  By definition, for every integer \(J\ge1\),
\[
 \eta_0^N(x)+\sum_{j=1}^J\eta_j^N(x) 
 = \eta\left(\frac{x}{N}\right) +\sum_{j=1}^J \left[\eta\left(\frac{x}{2^jN}\right)-\eta\left(\frac{x}{2^{j-1}N}\right)\right] 
 = \eta\left(\frac{x}{2^JN}\right).
\]
For $x\ge0$ there exists an integer \(J_0\) such that \(2^{J_0}N \ge x\). Then for all \(J \ge J_0\), we have \(x/(2^JN) \le 1\), and thus \(\eta(x/(2^JN)) = 1\). Therefore, taking the limit as \(J \to \infty\) gives
\[
\sum_{j=0}^\infty\eta_j^{N}(x)=1, \quad x \ge 0.
\]
By the definition of $\eta$,
\[
 \operatorname{supp} \eta_0^N\cap[0,\infty)\subset[0,2N]
\quad \text{and} \quad \operatorname{supp} \eta_j^N\subset[2^{j-1}N,2^{j+1}N], \text{ for } j\ge1.
\]

Finally, let \(q\ge0\) be an integer.  By the chain rule,
\[
 \frac{\dd^q}{\dd x^q}\eta\left(\frac{x}{2^jN}\right)
 =(2^jN)^{-q}\eta^{(q)}\left(\frac{x}{2^jN}\right),
\]
and
\[
 \frac{\dd^q}{\dd x^q}\eta\left(\frac{x}{2^{j-1}N}\right)
 =(2^{j-1}N)^{-q}
  \eta^{(q)}\left(\frac{x}{2^{j-1}N}\right).
\]
Therefore,
\[
 (\eta_j^N)^{(q)}(x) = (2^jN)^{-q}\eta^{(q)}\left(\frac{x}{2^jN}\right) - (2^{j-1}N)^{-q} \eta^{(q)}\left(\frac{x}{2^{j-1}N}\right).
\]
Taking absolute values and using
\((2^{j-1}N)^{-q}=2^q(2^jN)^{-q}\), we obtain
\begin{align*}
 \bigl|(\eta_j^N)^{(q)}(x)\bigr|
 &\le (2^jN)^{-q} \left|\eta^{(q)}\left(\frac{x}{2^jN}\right)\right|+2^q(2^jN)^{-q}\left|\eta^{(q)}\left(\frac{x}{2^{j-1}N}\right)\right| \\
 &\le(1+2^q)\|\eta^{(q)}\|_{L^\infty(\mathbb R)}(2^jN)^{-q}.
\end{align*}
The proof is complete.
\end{proof}

We shall repeatedly use the following elementary form of discrete summation
by parts.

\begin{lemma}\label{lem:finite-differences}
Let $m\ge1$, let $F\in C^m(\mathbb R)$ have compact support, and set
$c_n=F(n)$ for $n\in\mathbb Z$.  Assume that $c_n=0$ for $n<0$.  With
$\nabla c_n=c_n-c_{n-1}$, one has, for every $w\in\D$,
\begin{equation}\label{eq:finite-summation-parts}
 (1-w)^m\sum_{n=0}^\infty c_nw^n =\sum_{n=0}^\infty\nabla^mc_nw^n.
\end{equation}
Furthermore,
\begin{equation}\label{eq:finite-difference-integral}
 \nabla^mc_n =\int_{[0,1]^m} F^{(m)}(n-s_1-\cdots-s_m) \,\dd s_1\cdots \dd s_m.
\end{equation}
If $\operatorname{supp} F\subset[a,b]$, then
\begin{equation}\label{eq:finite-difference-l1}
 \sum_{n=0}^\infty|\nabla^mc_n|
 \le (b-a+m+2)\|F^{(m)}\|_{L^\infty(\mathbb R)}.
\end{equation}
\end{lemma}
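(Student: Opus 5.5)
The three assertions of Lemma~\ref{lem:finite-differences} are independent, and I would prove them in the order stated.

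\emph{The summation-by-parts identity \eqref{eq:finite-summation-parts}.} Since $F$ has compact support, only finitely many $c_n$ are nonzero, so every series is a polynomial in $w$ and no convergence issue arises. The case $m=1$ follows by shifting the index:
\[
(1-w)\sum_{n\ge0}c_nw^n=\sum_{n\ge0}c_nw^n-\sum_{n\ge1}c_{n-1}w^n=\sum_{n\ge0}(c_n-c_{n-1})w^n,
\]
where we used $c_{-1}=0$. The sequence $\nabla c$ again vanishes for $n<0$ and is finitely supported, so induction on $m$ gives the general identity.

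\emph{The integral representation \eqref{eq:finite-difference-integral}.} This also goes by induction on $m$. For $m=1$,
\[
\nabla c_n=F(n)-F(n-1)=\int_0^1F'(n-s)\,\dd s.
\]
For the inductive step, assume the formula holds for $m-1$ with $m\le$ the order of smoothness of $F$. Apply $\nabla$ to it and note that $\nabla$ acts on the variable $n$ inside the integral. Writing the difference $G(n)-G(n-1)=\int_0^1G'(n-s_m)\,\dd s_m$ with $G=F^{(m-1)}(\cdot-s_1-\cdots-s_{m-1})$ then produces one more integration variable. Since $F\in C^m$, Fubini applies, and the formula holds for all $n\in\mathbb Z$.

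\emph{The $\ell^1$ bound \eqref{eq:finite-difference-l1}.} From the integral representation, $|\nabla^mc_n|\le\|F^{(m)}\|_{L^\infty}$ for every $n$. Moreover, $\nabla^mc_n\neq0$ forces $n-s\in[a,b]$ for some $s\in[0,m]$, that is, $n\in[a,b+m]$. The number of integers in an interval of length $b-a+m$ is at most $b-a+m+1\le b-a+m+2$. Multiplying the count by the pointwise bound gives the estimate.

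None of these steps is a genuine obstacle. The only care needed is in the support counting for the last bound, where the extra $+2$ leaves slack. One should also note that the hypothesis $c_n=0$ for $n<0$ is exactly what prevents boundary terms at $n=0$ in the first identity.
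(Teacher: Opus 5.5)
Your proposal is correct and follows the paper's proof essentially step for step: you iterate the index shift (using $c_{-1}=0$) over the finitely supported sequences $\nabla^k c$, you prove the integral formula by induction via the fundamental theorem of calculus and Fubini, and you combine the pointwise bound $|\nabla^m c_n|\le\|F^{(m)}\|_{L^\infty(\mathbb R)}$ with a count of the integers in $[a,b+m]$. Your count of at most $b-a+m+1$ integers is slightly sharper than the paper's $b-a+m+2$, which is harmless.
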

\begin{proof}
Since \(F\) has compact support, the sequence \((c_n)_{n\in\mathbb Z}\)
has finite support. Consequently, for every \(k\ge 0\), the sequence
\((\nabla^k c_n)_{n\in\mathbb Z}\) also has finite support, so all the
series below are in fact finite sums and index shifts are legitimate.

Using \(c_n=0\) for \(n<0\), and in particular \(c_{-1}=0\), we obtain
\[
\sum_{n=0}^{\infty}\nabla c_n w^n =\sum_{n=0}^{\infty}(c_n-c_{n-1})w^n =(1-w)\sum_{n=0}^{\infty}c_nw^n.
\]
Moreover, \(\nabla^k c_n=0\) for \(n<0\) for every \(k\ge 0\); this follows inductively from the definition of $\nabla$ and the assumption $c_n=0$ for all $n<0$. 
Applying the preceding identity successively to the sequences
\((c_n)\), \((\nabla c_n)\), \ldots, \((\nabla^{m-1}c_n)\), we find
\[
\sum_{n=0}^{\infty}\nabla^m c_nw^n
=(1-w)^m\sum_{n=0}^{\infty}c_nw^n,
\]
which proves \eqref{eq:finite-summation-parts}.

We next prove \eqref{eq:finite-difference-integral}. By the fundamental
theorem of calculus,
\[
\nabla c_n
=F(n)-F(n-1)
=\int_{n-1}^{n}F'(t)\,\dd t
=\int_0^1 F'(n-s)\,\dd s.
\]
More generally, we claim that for every \(1\le k\le m\),
\[
\nabla^k c_n
=\int_{[0,1]^k}
F^{(k)}(n-s_1-\cdots-s_k)
\,\dd s_1\cdots \dd s_k.
\]
The case \(k=1\) was just established. Suppose that the formula holds
for some \(k<m\). Then, using Fubini's theorem and the fundamental
theorem of calculus,
\[
\begin{aligned}
\nabla^{k+1}c_n
&=\nabla^k c_n-\nabla^k c_{n-1}\\
&=\int_{[0,1]^k}\Bigl[F^{(k)}(n-s_1-\cdots-s_k)-F^{(k)}(n-1-s_1-\cdots-s_k)\Bigr]\,\dd s_1\cdots \dd s_k\\
&=\int_{[0,1]^k}\int_0^1 F^{(k+1)}(n-s_1-\cdots-s_k-s_{k+1})\,\dd s_{k+1}\,\dd s_1\cdots \dd s_k\\
&=\int_{[0,1]^{k+1}}F^{(k+1)}(n-s_1-\cdots-s_{k+1})\,\dd s_1\cdots \dd s_{k+1}.
\end{aligned}
\]
Thus the claim follows by induction, and taking \(k=m\) gives
\eqref{eq:finite-difference-integral}.

Finally, assume that \(\operatorname{supp} F\subset[a,b]\). Since $F \in C^m(\mathbb{R})$,
\(\|F^{(m)}\|_{L^\infty(\mathbb R)} <  \infty\). Moreover, because \(F\) vanishes identically outside \([a,b]\), so does \(F^{(m)}\). Hence, if
\(\nabla^m c_n\neq 0\), then for some \((s_1,\ldots,s_m)\in[0,1]^m\) one has
\[
n-s_1-\cdots-s_m\in[a,b].
\]
Because \(0\le s_1+\cdots+s_m\le m\), this implies $a\le n\le b+m$.
On the other hand, \eqref{eq:finite-difference-integral} yields
\[
|\nabla^m c_n|
\le
\int_{[0,1]^m}
\bigl|F^{(m)}(n-s_1-\cdots-s_m)\bigr|
\,\dd s_1\cdots \dd s_m
\le \|F^{(m)}\|_{L^\infty(\mathbb R)}.
\]
The interval \([a,b+m]\) contains at most \(b-a+m+2\) integers.
Therefore,
\[
\sum_{n=0}^{\infty}|\nabla^m c_n|
\le (b-a+m+2)\|F^{(m)}\|_{L^\infty(\mathbb R)},
\]
which is \eqref{eq:finite-difference-l1}. This completes the proof.
\end{proof}
\begin{lemma}\label{lem:direct-kernel}
There is a constant $C = C(\omega) >0$, depending only on $\omega$, such that, for every $z,\zeta\in\D$,
\begin{equation}\label{eq:direct-kernel}
 \left|B_z^{\omega}(\zeta)\right|
 \le C
 \frac{2}{|1-\overline z\,\zeta|}
 \Phi\left(\frac{2}{|1-\overline z\,\zeta|}\right),
\end{equation}
and
\begin{equation}\label{eq:direct-kernel-derivative}
 \left|\partial_\zeta B_z^{\omega}(\zeta)\right|
 \le C
 \left(\frac{2}{|1-\overline z\,\zeta|}\right)^2
 \Phi\left(\frac{2}{|1-\overline z\,\zeta|}\right).
\end{equation}
\end{lemma}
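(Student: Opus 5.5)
Write $w=\overline z\,\zeta$ and $N=2/|1-w|\in(1,\infty)$. The plan is to split the coefficient sequence with the dyadic partition of Lemma~\ref{lem:dyadic-partition} at scale $N$:
\[
B_z^\omega(\zeta)=\sum_{j=0}^\infty S_j(w),\qquad S_j(w)=\sum_{n=0}^\infty \Phi(n)\eta_j^N(n)w^n .
\]
Each $S_j$ is a finite sum, and the series in $j$ converges absolutely for $|w|<1$.

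\emph{The low-frequency piece $j=0$.} Here we only use size. Since $\Phi$ is increasing, the support bound gives
\[
|S_0|\le\sum_{n\le 2N}\Phi(n)\le(2N+1)\Phi(2N)\le C N\Phi(N),
\]
where the last step uses \eqref{eq:Phi-doubling}.

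\emph{The high-frequency pieces $j\ge1$.} Apply Lemma~\ref{lem:finite-differences} with $m=2$ to $F_j(x)=\Phi(x)\eta_j^N(x)$. This function is $C^\infty$, supported in $[2^{j-1}N,2^{j+1}N]\subset[1,\infty)$, and vanishes for negative $x$. We obtain
\[
|S_j(w)|\le |1-w|^{-2}\sum_n|\nabla^2 F_j(n)|\le |1-w|^{-2}\bigl(3\cdot2^{j-1}N+4\bigr)\,\|F_j''\|_\infty .
\]
By Leibniz, \eqref{eq:Phi-derivatives} (valid since $x\ge1$ on the support), \eqref{eq:dyadic-derivatives}, and $x\approx 2^jN$ on the support,
\[
\|F_j''\|_\infty\le C\,\Phi(2^{j+1}N)(2^jN)^{-2}.
\]
Hence
\[
|S_j|\le C N^2\,(2^jN)^{-1}\Phi(2^{j+1}N)=C N\,2^{-j}\Phi(2^{j+1}N).
\]
The main obstacle is that doubling only gives $\Phi(2^{j+1}N)\le C_\Phi^{\,j+1}\Phi(N)$, which beats $2^{-j}$ only if $C_\Phi<2$. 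To get summability I would instead take $m$ large in Lemma~\ref{lem:finite-differences}. The same computation with $m$ differences gives
\[
|S_j|\le C_m N^m (2^jN)^{1-m}\Phi(2^{j+1}N)\le C_m N\,2^{-j(m-1)}C_\Phi^{\,j+1}\Phi(N).
\]
Choosing $m$ with $2^{m-1}>C_\Phi$ (so $m$ depends only on $\omega$) makes the series over $j$ geometric, and summing yields \eqref{eq:direct-kernel}.

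\emph{The derivative.} We have
\[
\partial_\zeta B_z^\omega(\zeta)=\overline z\sum_{n\ge1} n\Phi(n)w^{n-1}.
\]
Apply the same scheme with coefficients $(n+1)\Phi(n+1)$ and $|\overline z|\le1$. The function $\Psi(x)=(x+1)\Phi(x+1)$ satisfies the same doubling and derivative bounds, with an extra factor $x$ and a doubling constant $2C_\Phi$. Each dyadic piece then carries an additional factor $2^jN$. Choosing $m$ with $2^{m-1}>2C_\Phi$, the $j=0$ term is bounded by $CN^2\Phi(N)$ and the $j\ge1$ terms sum geometrically to the same bound. This gives \eqref{eq:direct-kernel-derivative}.

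Finally, for the shift in the derivative case it must be checked that $\Psi$, extended with a cutoff, still vanishes for $n<0$. This holds because $\eta_j^N$ kills $x<2^{j-1}N$ for $j\ge1$, and the $j=0$ piece is again handled by size alone.
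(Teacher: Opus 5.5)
Your proposal is correct and follows essentially the same route as the paper: a dyadic partition of the coefficients at scale $N=2/|1-\overline z\,\zeta|$, a size bound for the low-frequency block, and higher-order discrete summation by parts with the number of differences chosen so that $2^{m-1}$ beats the doubling constant of $\Phi$ (the paper takes $N_0=\lceil\log_2(4C)\rceil$), which gives geometric decay in $j$. For the derivative, both arguments differentiate the series term by term first and then partition the shifted coefficients. The only cosmetic difference is that the paper bounds both $j=0$ and $j=1$ by size, whereas you treat $j=1$ by summation by parts; this is also valid, since the support $[N,4N]$ lies in $(1,\infty)$.
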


\begin{proof}
Fix $z,\zeta\in\D$ and set
\[
 N=\frac{2}{|1-\overline z\,\zeta|}.
\]
Since $|\overline z\,\zeta|<1$, we have $0<|1-\overline z\,\zeta|<2$, and hence $N>1$.

Repeated use of \eqref{eq:Phi-doubling} shows that $\Phi(n)$ has at most
polynomial growth. Therefore,
\[
 \sum_{n=0}^{\infty}\Phi(n)|\overline z\,\zeta|^n<\infty.
\]
By \eqref{eq:B-series} and Lemma~\ref{lem:dyadic-partition}, we have $\sum_{j=0}^{\infty}\eta_j^N(n) = 1$ for every $n \ge 0$. Hence, 
\[
\sum_{j = 0}^{\infty} \sum_{n = 0}^{\infty} \Phi(n) \eta_j^N(n) |\overline{z}\zeta|^n = \sum_{n=0}^{\infty}\Phi(n)|\overline z\,\zeta|^n<\infty.
\]
Thus, by Fubini's theorem
\begin{equation*}
 B_z^{\omega}(\zeta)
 =\sum_{j=0}^{\infty}B_j(z,\zeta),
 \quad
 B_j(z,\zeta)
 =\sum_{n=0}^{\infty}
 \Phi(n)\eta_j^N(n)(\overline z\,\zeta)^n.
\end{equation*}

We first estimate the two low-frequency blocks. Since
$\operatorname{supp} \eta_0^N\cap[0,\infty)\subset[0,2N]$, the interval $[0,2N]$
contains at most $2N+1$ nonnegative integers. By \eqref{eq:Phi-doubling} in Lemma \ref{lem:Phi}, there is a constant $C = C(\omega) \ge 1$ such that, 
\begin{align*}
 |B_0(z,\zeta)|
 &\le \sum_{\substack{n\in\mathbb Z_{\ge0}\\ n\le2N}}\Phi(n) \le(2N+1)\Phi(2N)\le3 C N\Phi(N).
\end{align*}
Thus, we have the estimates
\begin{equation}\label{eq:low-block-zero}
 |B_0(z,\zeta)|
 \le3 C 
 \frac{2}{|1-\overline z\,\zeta|}
 \Phi\left(\frac{2}{|1-\overline z\,\zeta|}\right).
\end{equation}
Similarly, for $j = 1$, we have $\operatorname{supp} \eta_1^N\subset[N,4N]$. The interval $[N,4N]$
contains at most $3N+1\le4N$ integers, and
\[
 \Phi(n)\le\Phi(4N)\le  C ^2\Phi(N),
 \quad N\le n\le4N.
\]
Therefore,
\begin{equation}\label{eq:low-block-one}
 |B_1(z,\zeta)|
 \le4 C ^2N\Phi(N)
 =4 C ^2
 \frac{2}{|1-\overline z\,\zeta|}
 \Phi\left(\frac{2}{|1-\overline z\,\zeta|}\right).
\end{equation}

We now fix $j\ge2$ and choose
\[
 N_0=\left\lceil\log_2(4 C )\right\rceil,
 \quad
 \frac{2 C }{2^{N_0}}\le\frac12.
\]
Define
\[
 F_j(x)=
 \begin{cases}
  \Phi(x)\eta_j^N(x),&x>0,\\
  0,&x\le0.
 \end{cases}
\]
Since $\eta_j^N$ vanishes identically on
$(-\infty,2^{j-1}N]$, this zero extension belongs to
$C_c^\infty(\mathbb R)$ and
\[
 \operatorname{supp} F_j\subset[2^{j-1}N,2^{j+1}N].
\]
For $x\in[2^{j-1}N,2^{j+1}N]$ and $0\le q\le N_0$,
\eqref{eq:Phi-derivatives}, the monotonicity of $\Phi$, and
\eqref{eq:Phi-doubling} give
\begin{align*}
 |\Phi^{(q)}(x)| \le C_q\frac{\Phi(x)}{x^q} \le  C 2^qC_q \Phi(2^jN)(2^jN)^{-q}.
\end{align*}
Together with \eqref{eq:dyadic-derivatives} and Leibniz' formula,
\begin{align*}
|F_j^{(N_0)}(x)|
&= \left| \sum_{q=0}^{N_0}\binom{N_0}{q} \Phi^{(q)}(x) (\eta_j^N)^{(N_0-q)}(x) \right| \\
&\le \sum_{q=0}^{N_0} \binom{N_0}{q} |\Phi^{(q)}(x)| \bigl|(\eta_j^N)^{(N_0-q)}(x)\bigr| \\
&\le  C \sum_{q=0}^{N_0}\binom{N_0}{q}2^qC_q\Phi(2^jN)(2^jN)^{-q} \\
&\hspace{3cm}\times(2^jN)^{-(N_0-q)}\bigl(1+2^{N_0-q}\bigr)\|\eta^{(N_0-q)}\|_{L^\infty(\mathbb R)} \\
&= C \left(\sum_{q=0}^{N_0}\binom{N_0}{q}2^qC_q\bigl(1+2^{N_0-q}\bigr)\|\eta^{(N_0-q)}\|_{L^\infty(\mathbb R)}\right)\Phi(2^jN)(2^jN)^{-N_0}.
\end{align*}
This implies
\begin{equation}\label{eq:derivative-of-Fj}
 \|F_j^{(N_0)}\|_{L^\infty(\mathbb R)} \le \widetilde{C}_0 \Phi(2^jN)(2^jN)^{-N_0},
\end{equation}
where
\[
 \widetilde{C}_0 = C \sum_{q=0}^{N_0}\binom{N_0}{q} 2^qC_q\bigl(1+2^{N_0-q}\bigr) \|\eta^{(N_0-q)}\|_{L^\infty(\mathbb R)}.
\]
Recall $\operatorname{supp} F_j \subset [2^{j-1}N, 2^{j + 1}N]$ and the length of the interval is $3 \cdot 2^{j-1} N$. Thus, by Lemma~\ref{lem:finite-differences}, we have 
\begin{align*}
 \sum_{k=0}^{\infty}|\nabla^{N_0}F_j(k)|\le (3 \cdot 2^{j-1}N + N_0 + 2)\|F_j^{(N_0)}\|_{L^\infty(\mathbb R)}
\end{align*}
and \eqref{eq:derivative-of-Fj} yields
\begin{align*} \sum_{k=0}^{\infty}|\nabla^{N_0}F_j(k)| &\le (3\cdot2^{j-1}N+N_0+2)\widetilde C_0 \Phi(2^jN)(2^jN)^{-N_0}\\
&\le (N_0+4)\widetilde C_0 \Phi(2^jN)(2^jN)^{1-N_0}.
\end{align*}
Applying \eqref{eq:finite-summation-parts} with
$\overline z\,\zeta$ in place of its scalar variable gives
\begin{equation*}
 |B_j(z,\zeta)|
 \le(N_0+4)\widetilde{C}_0(2^jN)\Phi(2^jN)
 \bigl(|1-\overline z\,\zeta|\,2^jN\bigr)^{-N_0}.
\end{equation*}
Repeated use of \eqref{eq:Phi-doubling} gives
$\Phi(2^jN)\le  C ^j\Phi(N)$, while the definition of $N$ gives
\[
 |1-\overline z\,\zeta|\,2^jN=2^{j+1}.
\]
Consequently,
\begin{align*}
 |B_j(z,\zeta)|
 &\le(N_0+4)\widetilde{C}_0N\Phi(N)2^{-N_0}
 \left(\frac{2 C }{2^{N_0}}\right)^j\\
 &\le(N_0+4)\widetilde{C}_0\,2^{-j}N\Phi(N),
 \quad j\ge2.
\end{align*}
Summing over $j\ge2$ and combining the result with
\eqref{eq:low-block-zero} and \eqref{eq:low-block-one} proves
\eqref{eq:direct-kernel}. 

We next estimate the $\zeta$-derivative. The power series
\eqref{eq:B-series} and its termwise derivative converge locally
uniformly, because $\Phi(n)$ has at most polynomial growth. Hence
\[
 \partial_\zeta B_z^{\omega}(\zeta)
 =\overline z\sum_{n=1}^{\infty}
 n\Phi(n)(\overline z\,\zeta)^{n-1}.
\]
At the fixed point $(z,\zeta)$, we insert the dyadic partition into this
already differentiated series and write
\[
 \partial_\zeta B_z^{\omega}(\zeta)
 =\sum_{j=0}^{\infty}B_j^{[1]}(z,\zeta),
\]
where
\[
 B_j^{[1]}(z,\zeta)
 =\overline z\sum_{n=1}^{\infty}
 n\Phi(n)\eta_j^N(n)(\overline z\,\zeta)^{n-1}.
\]
This decomposition is justified by absolute convergence. 
The order of operations is important: we first differentiate the original series term by term with respect to $\zeta$, and only then insert the dyadic partition at the scale
$N=2/|1-\overline z\,\zeta|$.
In particular, we do not differentiate the functions $B_j(z,\zeta)$ defined above, since the scale $N$ itself depends on $\zeta$.

Using $|z|\le1$, the support of $\eta_0^N$, the monotonicity of $\Phi$,
and \eqref{eq:Phi-doubling} in Lemma \ref{lem:Phi}, we pick a constant $C \ge 1$ such that 
\begin{align*}
 |B_0^{[1]}(z,\zeta)| \le\sum_{\substack{n\in\mathbb Z_{\ge1}\\ n\le2N}} n\Phi(n)\le  C \Phi(N) \sum_{\substack{n\in\mathbb Z_{\ge1}\\ n\le2N}}n
 \le4 C N^2\Phi(N).
\end{align*}
Therefore,
\begin{equation}\label{eq:low-derivative-zero}
 |B_0^{[1]}(z,\zeta)|
 \le4 C 
 \left(\frac{2}{|1-\overline z\,\zeta|}\right)^2
 \Phi\left(\frac{2}{|1-\overline z\,\zeta|}\right).
\end{equation}
Similarly, on the support of $\eta_1^N$ we have
$n\le4N$ and $\Phi(n)\le  C ^2\Phi(N)$. Since $[N,4N]$
contains at most $4N$ integers,
\begin{equation}\label{eq:low-derivative-one}
 |B_1^{[1]}(z,\zeta)|
 \le16 C ^2N^2\Phi(N)
 =16 C ^2
 \left(\frac{2}{|1-\overline z\,\zeta|}\right)^2
 \Phi\left(\frac{2}{|1-\overline z\,\zeta|}\right).
\end{equation}

Set $G(x)=x\Phi(x)$. For
$x\in[2^{j-1}N,2^{j+1}N]$, we have
\[
 |G(x)|\le2 C \Phi(2^jN)(2^jN).
\]
For every integer $q\ge1$,
\[
 G^{(q)}(x)=x\Phi^{(q)}(x)+q\Phi^{(q-1)}(x),
\]
and therefore
\begin{equation}\label{eq:G-derivatives}
 |G^{(q)}(x)|
 \le  C 2^{q-1}\bigl(4C_q+qC_{q-1}\bigr)
 \Phi(2^jN)(2^jN)^{1-q}.
\end{equation}
Choose
\[
 N_1=\left\lceil\log_2(8 C )\right\rceil,
 \quad
 \frac{4 C }{2^{N_1}}\le\frac12.
\]
For $j\ge2$, define
\[
 \widetilde{F}_j(x)=
 \begin{cases}
  G(x+1)\eta_j^N(x+1),&x>-1,\\
  0,&x\le-1.
 \end{cases}
\]
Then $\widetilde{F}_j\in C_c^\infty(\mathbb R)$,
\[
 \operatorname{supp} \widetilde{F}_j
 \subset[2^{j-1}N-1,2^{j+1}N-1],
\]
and
\[
 \widetilde{F}_j(k)=(k+1)\Phi(k+1)\eta_j^N(k+1),
 \quad k\ge0.
\]
Leibniz's formula, \eqref{eq:dyadic-derivatives}, and
\eqref{eq:G-derivatives} give
\begin{equation}\label{eq:derivative-of-tilde-Fj} 
 \|\widetilde{F}_j^{(N_1)}\|_{L^\infty(\mathbb R)}
 \le \widetilde{C}_1\Phi(2^jN)(2^jN)^{1-N_1},
\end{equation}
where
\begin{align*}
 \widetilde{C}_1
 &:=2 C \bigl(1+2^{N_1}\bigr)\|\eta^{(N_1)}\|_{L^\infty(\mathbb R)}\\
 &\quad+ C \sum_{q=1}^{N_1}\binom{N_1}{q} 2^{q-1}\bigl(4C_q+qC_{q-1}\bigr) \bigl(1+2^{N_1-q}\bigr) \|\eta^{(N_1-q)}\|_{L^\infty(\mathbb R)}.
\end{align*}
Recall $\operatorname{supp} \widetilde{F}_j \subset [2^{j-1}N - 1, 2^{j + 1}N - 1]$ and the length of the interval is $3 \cdot 2^{j-1} N$. Thus, by Lemma~\ref{lem:finite-differences}, we have 
\begin{align*}
 \sum_{k=0}^{\infty}|\nabla^{N_1}\widetilde{F}_j(k)|\le (3 \cdot 2^{j-1}N + N_1 + 2)\|\widetilde{F}_j^{(N_1)}\|_{L^\infty(\mathbb R)}
\end{align*}
and \eqref{eq:derivative-of-tilde-Fj} yields
\begin{align*}
 \sum_{k=0}^{\infty}|\nabla^{N_1}\widetilde{F}_j(k)|
 &\le (3 \cdot 2^{j-1}N + N_1 + 2)\widetilde{C}_1\Phi(2^jN)(2^jN)^{1-N_1}\\
 &\le(N_1+4)\widetilde{C}_1 \Phi(2^jN)(2^jN)^{2-N_1}.
\end{align*}
After the index shift $k=n-1$,
\[
 B_j^{[1]}(z,\zeta)
 =\overline z\sum_{k=0}^{\infty}
 \widetilde{F}_j(k)(\overline z\,\zeta)^k.
\]
Thus \eqref{eq:finite-summation-parts} and $|z|\le1$ imply
\[
 |B_j^{[1]}(z,\zeta)|
 \le(N_1+4)\widetilde{C}_1(2^jN)^2\Phi(2^jN)
 \bigl(|1-\overline z\,\zeta|\,2^jN\bigr)^{-N_1}.
\]
Using
$\Phi(2^jN)\le  C ^j\Phi(N)$ and
$|1-\overline z\,\zeta|\,2^jN=2^{j+1}$, we obtain
\begin{align*}
 |B_j^{[1]}(z,\zeta)|
 &\le(N_1+4)\widetilde{C}_1N^2\Phi(N)2^{-N_1}
 \left(\frac{4 C }{2^{N_1}}\right)^j\\
 &\le(N_1+4)\widetilde{C}_1\,2^{-j}N^2\Phi(N),
 \quad j\ge2.
\end{align*}
Summing over $j\ge2$ and combining the result with
\eqref{eq:low-derivative-zero}--\eqref{eq:low-derivative-one} proves
\eqref{eq:direct-kernel-derivative}. One may take
\[
 \widetilde{C} = \widetilde{C}(\omega)
 =\max\left\{3 C +4 C ^2+\frac{N_0+4}{2}\widetilde{C}_0,\,4 C +16 C ^2+\frac{N_1+4}{2}\widetilde{C}_1\right\}.
\]
Since $C$ and the constants $C_q$ depend only on $ \omega $,
the constants obtained above have the asserted dependence.
\end{proof}

\begin{proof}[Proof of Proposition~\ref{thm:apdx-kernel-estimate}]
For $z,\, \zeta \in \mathbb{D}$, if $0<|1-\overline z\,\zeta|\le2$, then the
monotonicity of the moments and three applications of
\eqref{eq:moment-doubling} give
\[
 \Phi\left(\frac{2}{|1-\overline z\,\zeta|}\right)
 =\frac{1}{2\omega_{4/|1-\overline z\,\zeta|+1}}
\le\frac{1}{2\omega_{8/|1-\overline z\,\zeta|}}
 \le\frac{ C ^3}{2\omega_{1/|1-\overline z\,\zeta|}}.
\]
Combining this estimate with Lemma~\ref{lem:direct-kernel} proves
\eqref{eq:apdx-kernel-estimate} and
\eqref{eq:apdx-kernel-estimate-derivative}. The proof is completed now.
\end{proof}


\begin{thebibliography}{99}

\bibitem[BB78]{BB78}
D. Békollé and A. Bonami,
Inégalités à poids pour le noyau de Bergman,
\emph{C. R. Acad. Sci. Paris Sér. A-B}
\textbf{286} (1978), no.~18, 775--778.

\bibitem[BLT21]{BLT21}
J. Bonet, W. Lusky, and J. Taskinen,
Unbounded Bergman projections on weighted spaces with respect to exponential weights,
\emph{Integral Equations Operator Theory}
\textbf{93} (2021), no.~6, Paper No.~61, 20 pp.

\bibitem[Bor04]{Bor04}
A. Borichev,
On the Bekollé--Bonami condition,
\emph{Math. Ann.}
\textbf{328} (2004), no.~3, 389--398.

\bibitem[CRW76]{CRW76}
R. Coifman, R. Rochberg, and G. Weiss,
Factorization theorems for Hardy spaces in several variables,
\emph{Ann. of Math. (2)}
\textbf{103} (1976), no.~3, 611--635.

\bibitem[CP15]{CP15}
O. Constantin and J. Peláez,
Boundedness of the Bergman projection on $L^p$-spaces with exponential weights,
\emph{Bull. Sci. Math.}
\textbf{139} (2015), no.~3, 245--268.

\bibitem[Da22]{Da22}
G. Dall'Ara,
Around $L^1$ (un)boundedness of Bergman and Szeg\H{o} projections,
\emph{J. Funct. Anal.}
\textbf{283} (2022), no.~5, Paper No.~109550, 27 pp.

\bibitem[DHZZ01]{DHZZ01}
Y. Deng, L. Huang, T. Zhao, and D. Zheng,
Bergman projection and Bergman spaces,
\emph{J. Operator Theory}
\textbf{46} (2001), no.~1, 3--24.



\bibitem[Dos04]{Dos04}
M. Dostani\'c,
Unboundedness of the Bergman projections on $L^p$ spaces with exponential weights,
\emph{Proc. Edinb. Math. Soc. (2)}
\textbf{47} (2004), no.~1, 111--117.

\bibitem[Dos09]{Dos09}
M. Dostani\'c,
Boundedness of the Bergman projections on $L^p$ spaces with radial weights,
\emph{Publ. Inst. Math. (Beograd) (N.S.)}
\textbf{86(100)} (2009), 5--20.

\bibitem[FR74]{FR74}
F. Forelli and W. Rudin,
Projection on spaces of holomorphic functions in balls,
\emph{Indiana Univ. Math. J.}
\textbf{24} (1974), 593--602.

\bibitem[GW24]{GW24}
A. Green and N. Wagner,
Weighted estimates for the Bergman projection on planar domains,
\emph{Trans. Amer. Math. Soc.}
\textbf{377} (2024), no.~11, 8023--8048.

\bibitem[GHX04]{GHX04}
K. Guo, J. Hu, and X. Xu,
Toeplitz algebras, subnormal tuples and rigidity on reproducing
$\mathbb{C}[z_1,\ldots,z_d]$-modules,
\emph{J. Funct. Anal.}
\textbf{210} (2004), no.~1, 214--247.

\bibitem[Hed02]{Hed02}
H. Hedenmalm,
The dual of a Bergman space on simply connected domains,
\emph{J. Anal. Math.}
\textbf{88} (2002), 311--335.

\bibitem[HJS02]{HJS02}
H. Hedenmalm, S. Jakobsson, and S. Shimorin,
A biharmonic maximum principle for hyperbolic surfaces,
\emph{J. Reine Angew. Math.}
\textbf{550} (2002), 25--75.

\bibitem[HKZ00]{HKZ00}
H. Hedenmalm, B. Korenblum, and K. Zhu,
\emph{Theory of Bergman Spaces},
Graduate Texts in Mathematics, vol.~199,
Springer-Verlag, New York, 2000.



\bibitem[Hyt10]{Hyt10}
T. Hyt\"onen,
A framework for non-homogeneous analysis on metric spaces, and the RBMO space of Tolsa,
\emph{Publ. Mat.}
\textbf{54} (2010), no.~2, 485--504.

\bibitem[HLYY12]{HLYY12}
T. Hyt\"onen, S. Liu, D. Yang, and D. Yang,
Boundedness of Calder\'on--Zygmund operators on non-homogeneous metric measure spaces,
\emph{Canad. J. Math.}
\textbf{64} (2012), no.~4, 892--923.

\bibitem[HM12]{HM12}
T. Hyt\"onen and H. Martikainen,
Non-homogeneous $Tb$ theorem and random dyadic cubes on metric measure spaces,
\emph{J. Geom. Anal.}
\textbf{22} (2012), no.~4, 1071--1107.

\bibitem[LR96]{LR96}
P. Lin and R. Rochberg,
Trace ideal criteria for Toeplitz and Hankel operators on weighted Bergman spaces
with exponential type weights,
\emph{Pacific J. Math.}
\textbf{173} (1996), no.~1, 127--146.

\bibitem[McN94]{McN94}
J.~D. McNeal,
The Bergman projection as a singular integral operator,
\emph{J. Geom. Anal.}
\textbf{4} (1994), no.~1, 91--103.



\bibitem[Pel16]{Pel16}
J. Peláez,
Small weighted Bergman spaces,
\emph{Proceedings of the Summer School in Complex and Harmonic Analysis,
and Related Topics}
\textbf{22} (2016), 29--98.

\bibitem[PR14]{PR14}
J. Peláez and J. Rättyä,
Weighted Bergman spaces induced by rapidly increasing weights,
\emph{Mem. Amer. Math. Soc.}
\textbf{227} (2014), no.~1066, vi+124 pp.

\bibitem[PR15]{PR15}
J. Peláez and J. Rättyä,
Embedding theorems for Bergman spaces via harmonic analysis,
\emph{Math. Ann.}
\textbf{362} (2015), no.~1--2, 205--239.

\bibitem[PR16]{PR16}
J. Peláez and J. Rättyä,
Two weight inequality for Bergman projection,
\emph{J. Math. Pures Appl.} (9)
\textbf{105} (2016), no.~1, 102--130.

\bibitem[PR21]{PR21}
J. Peláez and J. Rättyä,
Bergman projection induced by radial weight,
\emph{Adv. Math.}
\textbf{391} (2021), Paper No.~107950, 70 pp.

\bibitem[PRW19]{PRW19}
J. Peláez, J. Rättyä, and B.~D. Wick,
Bergman projection induced by kernel with integral representation,
\emph{J. Anal. Math.}
\textbf{138} (2019), no.~1, 325--360.

\bibitem[PRWW26]{PRWW26}
A. Pennanen, J. Rättyä, S. Wang, and F. Wu,
Optimal off-diagonal upper estimates for Bergman reproducing kernels,
\emph{arXiv:2607.19959} (2026).

\bibitem[Sh02]{Sh02}
S. Shimorin,
An integral formula for weighted Bergman reproducing kernels,
\emph{Complex Var. Theory Appl.}
\textbf{47} (2002), no.~12, 1015--1028.

\bibitem[ZJ64]{ZJ64}
V. Zaharjuta and V. Judovic,
The general form of a linear functional in $H^p$,
\emph{Uspekhi Mat. Nauk}
\textbf{19} (1964), no.~2, 139--142
(in Russian).

\bibitem[Ze13]{Ze13}
Y. Zeytuncu, $L^p$ regularity of weighted Bergman projections,
\emph{Trans. Amer. Math. Soc.}
\textbf{365} (2013), no.~6, 2959--2976.

\bibitem[Zhu07]{Zhu07}
K. Zhu,
\emph{Operator Theory in Function Spaces}, 2nd ed., Mathematical Surveys and Monographs, vol.~138,
American Mathematical Society, Providence, RI, 2007.

\end{thebibliography}
\end{document}